\def\R{\mathbb R}
\def\Z{\mathbb Z}
\def\T{\mathbb T}
\def\p{\mathbb P}
\def\q{\mathbb Q}
\def\Q{\mathbb Q}
\def\cC{\mathcal C}
\def\d{\partial}
\def\t{\dot}
\def\H{\dot{H}}
\def\rG{\tilde{G}}
\def\rP{\tilde{P}}
\def\aG{\bar{G}}
\def\aP{\bar{P}}
\renewcommand\div{{\rm div}\,}
\renewcommand\lim{{\rm lim}\,}
\renewcommand\exp{{\rm exp}\,}
\renewcommand\sup{{\rm sup}\,}
\renewcommand\inf{{\rm inf}\,}
\renewcommand\log{{\rm log}\,}
\renewcommand\max{{\rm max}\,}
\newcommand{\with}{\quad\!\hbox{with}\!\quad}
\newcommand{\andf}{\quad\!\hbox{and}\!\quad}
\newcommand{\ifa}{\quad\!\hbox{if}\!\quad}
\newcommand{\Int}{\displaystyle \int}
\newcommand{\dbar}{\,d\hspace*{-0.08em}\bar{}\hspace*{0.1em}}
\newtheorem{theorem}{Theorem}[section]
 \newtheorem{corollary}[theorem]{Corollary}
 \newtheorem{lemma}[theorem]{Lemma}
 \newtheorem{proposition}[theorem]{Proposition}
 \theoremstyle{definition}
 \theoremstyle{remark}
 \newtheorem{remark}[theorem]{Remark}
 \numberwithin{equation}{section}
\newcommand{\wt}{\widetilde}
\newcommand{\eps}{\varepsilon}
\newcommand{\divv}{\mbox{div }}
\newcommand{\inn}{\mbox{in}}
\newcommand{\abs}[1]{\left\vert#1\right\vert}
\newcommand{\norm}[1]{\Vert#1\Vert}
\begin{document}

\title[Exponential decay of compressible Navier-Stokes equations]{Exponential decay  results for compressible viscous flows with vacuum}
\author{ Rapha\"el Danchin}
\author{Shan Wang}

\begin{abstract}
We are concerned with
the isentropic compressible Navier-Stokes system   in the  two-dimensional torus, with 
rough data and vacuum  : the initial velocity  is in the Sobolev space $H^1$ and   the initial density is only bounded and nonnegative. Arbitrary regions of vacuum are admissible, and no compatibility condition is required. 
Under these assumptions and for large enough bulk viscosity,  global solutions have been constructed in  \cite{DM-ripped}. 
The main goal of the paper is to establish  that these solutions converge  exponentially fast to a constant state, and  to 
specify the convergence rate in terms of the   viscosity coefficients.

We also  prove  exponential decay estimates for the solutions
to the inhomogeneous incompressible Navier-Stokes equations. 
This latter result extends to the torus case the recent paper \cite{DMP} dedicated to this system 
in smooth bounded domains. 
\end{abstract}
\date{}
\keywords{Exponential decay, Navier-Stokes, bounded density, vacuum}

\maketitle
\section*{Introduction}
In this article, our main object of study is the  long-time behavior of global solutions to the 
barotropic compressible Navier-Stokes system in  a general two-dimensional periodic box 
$\T^2$. This system that governs the evolution of the density $\rho=\rho(t,x)\in[0,\infty)$ and of the velocity field
$u=u(t,x)\in\R^2$  of compressible viscous flows  reads:
\begin{equation*}
\left\{\begin{aligned}
&\rho_{t}+\div(\rho u)=0    & \inn \quad \R_+\times \T^{2}, \\
 &(\rho u)_{t}+\div(\rho u\otimes u)-\mu\Delta u-(\lambda\!+\!\mu)\nabla \div u+\nabla P=0 & \inn \quad\R_+\times \T^{2}, \\
 &(\rho,u)|_{t=0}=(\rho_{0},u_{0}) & \inn \quad \T^{2}.
\end{aligned}\right.\leqno(CNS)
\end{equation*}
Above, the pressure $P$ is a \emph{given} function of the density and the real numbers $\lambda$ and $\mu$ are the bulk and shear viscosity coefficients, respectively, they are assumed to satisfy
\begin{equation}\label{eq:viscosity}
\mu>0 \andf \nu:=\lambda+2\mu>0.\end{equation}
It is well known that sufficiently smooth solutions of  (CNS) satisfy for all $t\geq 0:$
\begin{itemize}
    \item the conservation of total mass and momentum, namely
    \begin{equation}\label{conlaws}
        \int_{\T^2}\rho(t,x)\,dx=\int_{\T^2}\rho_0(x)\,dx \andf \int_{\T^2}(\rho u)(t,x)\,dx=\int_{\T^2}(\rho_0 u_0)(x)\,dx;
    \end{equation}
    \item the following energy balance:
    \begin{multline}\label{eq:energy}
        E(t)+\int_0^t(\mu \|\nabla \p u(\tau)\|^2_{L_{2}}+\nu \|\div u(\tau)\|^2_{L_{2}})\,d\tau=E(0)=:E_0 \\
    \with E(t):=\int_{\T^2}\Bigl(\frac 12\rho(t,x)\abs{u(t,x)}^2+e(t,x)\Bigr)\,d\tau.
     \end{multline}
     \end{itemize}
     Above,  $e$ stands for the potential energy of  the fluid  defined   (up to a linear function) by 
\begin{equation}\label{eq:e}
   e(\rho):=\rho \int_{\bar\rho}^{\rho} \frac{P(s)}{s^2}\,ds-\frac{P(\bar\rho)}{\bar\rho}(\rho-\bar\rho)
   =\rho\int_{\bar\rho}^\rho\frac{P(s)-P(\bar\rho)}{s^2}\,ds,\qquad\bar\rho\in\R_+. 
\end{equation}
For simplicity, we here focus on the \emph{isentropic} case, namely 
\begin{equation}\label{eq:isentro}
P(\rho)=\kappa \rho^\gamma\with \kappa>0\andf \gamma\geq1.\end{equation}
Note that for $\kappa=\bar\rho=1,$ we have
\begin{equation}
e(\rho)=\begin{cases}
\rho \log \rho-\rho+1 & \ifa \gamma=1,\\
\frac{\rho^\gamma}{\gamma-1}-\frac{\gamma \rho}{\gamma-1}+1 & \ifa \gamma>1.
\end{cases}
\end{equation}
 Since the 60ies, a number of mathematical works have been dedicated to studying 
 the well-posedness of (CNS)  in various domains. On the one hand, whenever the data are smooth enough and bounded away from vacuum (i.e. $\inf\rho_0>0$), 
one can construct smooth local-in-time solutions \cite{Nash}, that  are global in the perturbative regime
of a stable constant solution \cite{MN}. 
On the other hand, by taking advantage of the energy balance \eqref{eq:energy}, one can produce global-in-time
weak solutions with finite energy \cite{Lions,Feireisl}.
\smallbreak
Here we are interested in an intermediate situation, first considered by D. Hoff in \cite{Hoff}, where the density has no regularity
but where the velocity  is more regular than required for \eqref{eq:energy}. 
We shall adopt the framework that has been introduced by B. Desjardins in \cite{Des-CPDE}
and used more recently by the first author in a recent joint paper with P.B. Mucha \cite{DM-ripped}: 
 the  initial velocity $u_0$ is in $H^1(\T^2)$ and the initial density $\rho_0$ is  only bounded: 
\begin{equation}\label{eq:inir}0\leq \rho_0\leq \rho^*_0:=\underset{x\in\T^2}{\sup}\rho_0(x)<\infty.\end{equation}
In the smooth situation with  periodic boundary conditions, it is expected  that  global solutions 
with `nice' properties converge exponentially fast to $(\bar\rho,0)$ with $\bar\rho$ being the mean value of $\rho_0,$ 
when time goes to infinity.  This fact has been proved recently for the full Navier-Stokes equations in  $\T^3,$ see \cite{ZZ}, and in smooth bounded domains \cite{DT}. 
It  turns out to be also true for not so smooth data  with, possibly, vacuum. 
In this direction, one may mention the  recent paper \cite{WYZ} by G. Wu, L. Yao and Y. Zhang. 
 There, density is in $W^{1,q}(\T^3)$ for some $q>3$
(which, unfortunately  precludes handling discontinuous densities)   but not necessarily positive everywhere, and the velocity is in $H^2(\T^3).$ 
\smallbreak
Both \cite{WYZ} and \cite{ZZ} required a priori  the density to stay uniformly  bounded for all positive time. 
Our main goal here is to get rid of this assumption and to prove that the global solutions that have been constructed in \cite{DM-ripped}  
under much weaker regularity assumptions converge exponentially fast to the  constant state $(\bar\rho,0).$ 
 We shall also  specify the rate of convergence with respect to the viscosity coefficients, 
pointing out that the \emph{viscous effective flux} $G:=\nu\div u-P$ has faster decay.  
As a by-product of our results, we shall obtain an accurate control on the lower and upper bound of the density. In particular, no vacuum may appear if there is no vacuum initially. 
\medbreak
The rest of the paper is structured as follows. 
In the first section, as a warm-up, we prove exponential decay of solutions in a simpler situation, namely 
that of the inhomogeneous incompressible Navier-Stokes equations under similar assumptions on the data. 
The bulk of the paper is Section \ref{s:CNS}. There, we state different families of time decay estimates for (CNS), involving 
the energy and higher order quantities. 
Some key Poincar\'e type inequalities specific to the torus case are presented in Appendix.

\medbreak\noindent{\bf Notation.} Throughout the text,
$A\lesssim B$ means that  $A\leq CB$, where $C$ stands for various positive real numbers the value of which does not matter. 
 The notation $L_p$ will designate $L_p(\Omega,\dbar x)$ with $\dbar x$ the normalized Lebesgue measure on 
 $\T^d.$ The notation for the \emph{convective derivative}: 
$\t{u}\overset{\text{def}}{=}u_{t}+u\cdot \nabla u.$
 Finally, ${\mathbb P}$ (resp. $\Q$) denotes the Leray projector on divergence free (resp. potential) vector fields.


\section{The incompressible case}\label{s:INS}

Here  we are concerned with the  inhomogeneous incompressible Navier-Stokes system: 
 \begin{equation*}
\left\{\begin{aligned}
&\rho_{t}+\div(\rho u)=0    & \inn \quad \R_+\times \T^{d}, \\
 &(\rho u)_{t}+\div(\rho u\otimes u)-\mu\Delta u+\nabla P=0 & \inn \quad\R_+\times \T^{d}, \\
&\div u=0 & \inn \quad\R_+\times \T^{d},\\
&(\rho,u)|_{t=0}=(\rho_{0},u_{0}) & \inn \quad \T^{d},
\end{aligned}\right.\leqno(INS)
\end{equation*}
where $\rho, u$ stand for the density and velocity of the fluid, respectively, and $P$ is the (scalar) pressure function. 
Note that in contrast with the compressible situation, the pressure is not given. 
\medbreak
In a recent paper \cite{DM1},  P.B. Mucha and the first author established 
a global-in-time existence and uniqueness result for (INS) supplemented with any 
data  $(\rho_0,u_0)$ with $\rho_0$ just satisfying   \eqref{eq:inir} and  divergence free $u_0$ in $H^1$. 
 A `commercial'  version of the result obtained therein reads: 
\begin{theorem}\label{them1} Let $d=2,3.$ 
 Let $\rho_0$  satisfy \eqref{eq:inir} and let $u_0\in H^1(\T^d)$ be such that $\div u_0=0.$ 
 If $d=3,$ assume  additionally that  
 $$(\rho^*_0)^{3/2}\|\sqrt{\rho_0}u_0\|_{L_2}\|\nabla u_0\|_{L_2}\ll \mu^2.$$
  Then, System (INS) admits a unique global solution $(\rho, u, P)$ such that (among others)
\begin{equation}\label{eq:results1} 
\begin{aligned}
&\rho\in L_{\infty}(\R_+;L_\infty),\ u\in L_{\infty}(\R_+;H^1),\\
&\sqrt{\rho}u_t,\nabla^2 u,\nabla P\in L_2(\R_+;L_2),\ \nabla u\in L_{1,loc}(\R_+;L_\infty),\\
&\sqrt{\rho}u\in \cC(\R_+;L_2)\andf \rho \in \cC(\R_+;L_p) \quad\hbox{for all }\ 1\leq p<\infty. 
\end{aligned}
\end{equation}
\end{theorem}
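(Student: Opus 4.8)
The plan is to set up a priori estimates at two levels of regularity, use them to build a solution by compactness on a regularized problem, and finally prove uniqueness through a Lagrangian reformulation. I would first rewrite the momentum equation in convective form $\rho\dot u-\mu\Delta u+\nabla P=0$, which is legitimate once $\div u=0$ and the continuity equation are invoked. Testing against $u$ yields the basic energy balance controlling $\|\sqrt\rho\,u(t)\|_{L_2}$ and $\mu\int_0^t\|\nabla u\|_{L_2}^2$, while the maximum principle for $\rho_t+\div(\rho u)=0$ immediately propagates $0\le\rho\le\rho_0^*$. These are the only bounds that come for free; everything else must be extracted from the structure of $\dot u$.

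The heart of the matter is the higher-order estimate obtained by testing the convective momentum equation against $\dot u$. After integration by parts this produces a balance of the form $\tfrac{\mu}2\tfrac{d}{dt}\|\nabla u\|_{L_2}^2+\|\sqrt\rho\,\dot u\|_{L_2}^2\le N(u)$, where $N(u)$ collects the commutator terms coming from $\nabla\dot u=\nabla u_t+\nabla(u\cdot\nabla u)$ together with the contribution of the pressure, which does not vanish since $\div\dot u=\nabla u:(\nabla u)^{\top}\neq0$. Controlling $N(u)$ is where the dimension enters: in $d=2$ the offending terms are critical and close by Ladyzhenskaya's inequality $\|f\|_{L_4}^2\lesssim\|f\|_{L_2}\|\nabla f\|_{L_2}$ together with the energy bound, with no size restriction; in $d=3$ the same terms are supercritical, and this is exactly why the smallness assumption $(\rho_0^*)^{3/2}\|\sqrt{\rho_0}u_0\|_{L_2}\|\nabla u_0\|_{L_2}\ll\mu^2$ is imposed, so as to absorb $N(u)$ into the good terms and close a global bootstrap. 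Feeding $\sqrt\rho\,\dot u\in L_2(\R_+;L_2)$ back into the stationary Stokes system $-\mu\Delta u+\nabla P=-\rho\dot u$, $\div u=0$ and invoking $L_2$ maximal regularity then upgrades the bounds to $\nabla^2u,\nabla P\in L_2(\R_+;L_2)$.

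To make sense of the density equation at this regularity and to prepare for uniqueness, I would next establish the borderline bound $\nabla u\in L_{1,loc}(\R_+;L_\infty)$. Since $u_0$ is merely $H^1$ one cannot hope for $L_\infty$ control of $\nabla u$ up to $t=0$, so the natural device is a family of time-weighted estimates (weights such as $\sqrt t$ or $t$) that buy back regularity for $t>0$ and, through $L_p$ maximal regularity for the Stokes operator, produce $\nabla^2 u\in L_p$ for some $p>d$; since $W^{1,p}\hookrightarrow L_\infty$ for such $p$, this delivers the time-integrable Lipschitz bound. That bound guarantees that the flow of $u$ is well defined and bi-Lipschitz, so that $\rho$ is transported without loss of information and retains its initial bounds for all time.

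Solutions are then constructed by regularizing the data, solving the approximate system, and passing to the limit: the estimates above are uniform in the regularization and provide enough compactness to recover the limit system, while strong convergence of the density follows from the flow-map (renormalization) structure. The step I expect to be the true obstacle is \emph{uniqueness}, because the density carries no Sobolev regularity and a direct Eulerian stability estimate loses a derivative on $\rho$. The remedy, following \cite{DM1}, is to recast the system in Lagrangian coordinates, where the density becomes a fixed function of the label and the momentum equation turns into a perturbation of the constant-coefficient Stokes system with coefficients controlled by $\int_0^t\|\nabla u\|_{L_\infty}$; a contraction estimate on a short time interval, propagated by the global bounds, then yields uniqueness and transfers it back to the Eulerian formulation.
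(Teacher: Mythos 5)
The paper does not prove Theorem \ref{them1}: it is quoted (in ``commercial'' form) from \cite{DM1}, so there is no in-paper proof to compare against. Your outline --- energy balance plus the transport bound on $\rho$, the $H^1$/material-derivative estimate closed by Ladyzhenskaya's inequality in $d=2$ and by the stated smallness condition in $d=3$, Stokes maximal regularity to recover $\nabla^2u,\nabla P\in L_2(\R_+;L_2)$, time-weighted estimates yielding $\nabla u\in L_{1,loc}(\R_+;L_\infty)$, and uniqueness via the Lagrangian formulation --- is an accurate summary of the strategy actually used in \cite{DM1}.
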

In another recent paper \cite{DMP} in collaboration with P.B. Mucha and T. Piasecki, the first author established
that the solutions of the above theorem have exponential decay, if the fluid domain is smooth and bounded. 
As a warm-up, we shall establish that exponential decay also holds in  the torus case:
\begin{theorem}\label{lemed1}
Let $(\rho,u)$ be a solution to (INS) given  by Theorem \ref{them1}. 
There exists a positive constant $C_0$ depending only on the initial data and on the
measure of $\T^d,$ and $0<\beta_2<\beta_1$ such that for all $t\geq 0$, we have:
\begin{eqnarray}\label{eq:esul2}
&\Int_{\T^d} \rho(t)\abs{u(t)}^2\dbar x\leq e^{-\beta_1 t}\int_{\T^d} \rho_0\abs{u_0}^2\dbar x,\\\label{eq:esh1}
   &\underset{t\in \R_+}\sup  \|\nabla u(t)\|_{L_2}\leq C_0 e^{-\beta_2 t}, \\ \label{eq:esh2}
 &\Int_0^{\infty} e^{2\beta_2 t}\Bigl( \|\sqrt{\rho(t)} u_t(t)\|^2_{L_2}+ \|\nabla^2 u(t)\|^2_{L_2}+ \|\nabla P(t)\|^2_{L_2}\Bigr)\,dt\leq C_0.
 \end{eqnarray}
 Furthermore, there exists $\beta_3<\beta_1,$ we have
\begin{multline}\label{lemd2}
    \underset{t\in \R_+}\sup\|\sqrt{\rho t}\, e^{\beta_3t} u_t(t)\|_{L_2}^2+\underset{t\in \R_+}\sup
    \|\sqrt  t\,e^{\beta_3t}(\nabla^2u,\nabla P)(t)\|^2_{L_2}\\    
 +\int_0^\infty e^{2\beta_3t}\|\sqrt{t}\nabla u_t\|^2_{L_2}\,dt\leq C_0,
    \end{multline}
    Finally, there exists  $\beta_4<\beta_1,$ we have
    \begin{eqnarray}\label{eq:qwerrewq}
&e^{\beta_4 t}(\nabla^2u,\nabla P)\in L_1(\R_+;L_r(\T^d)) \quad\hbox{for all}\quad r<\infty,\\\label{eq:nul1li}
&e^{\beta_4 t}\nabla u\in L_1(\R_+;L_\infty(\T^d)).
\end{eqnarray}
    \end{theorem}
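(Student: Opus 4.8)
The plan is to establish the exponential decay estimates in a cascade, starting from the basic energy and bootstrapping to higher regularity.

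First I would prove the fundamental $L_2$ decay \eqref{eq:esul2}. The key observation is that since $u_0$ has zero mean (being divergence-free on the torus and, after subtracting the conserved momentum, with appropriate normalization), a Poincar\'e-type inequality on $\T^d$ controls $\|u\|_{L_2}$ by $\|\nabla u\|_{L_2}$. Testing the momentum equation against $u$ reproduces the energy identity in the form $\frac{d}{dt}\int\rho|u|^2\,\dbar x + 2\mu\|\nabla u\|_{L_2}^2=0$ (using $\div u=0$ to kill the pressure term and the convective term). The obstacle is that $\int\rho|u|^2$ is weighted by $\rho$, which may vanish on vacuum regions, so one cannot directly bound it by $\|\nabla u\|_{L_2}^2$. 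I would resolve this using a Poincar\'e inequality adapted to the bounded, possibly-degenerate density (these are precisely the torus-specific Poincar\'e inequalities deferred to the Appendix), yielding $\int\rho|u|^2\,\dbar x\lesssim \|\nabla u\|_{L_2}^2$, and hence a Gronwall differential inequality $\frac{d}{dt}\int\rho|u|^2\leq -\beta_1\int\rho|u|^2$ for some $\beta_1$ comparable to $\mu$.

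Next I would upgrade to the $H^1$ decay \eqref{eq:esh1} and the weighted space-time estimate \eqref{eq:esh2}. The standard energy functional here is $\|\nabla u\|_{L_2}^2$, whose evolution I would track by testing the momentum equation with $u_t$ (or with $\dot u$). This produces $\frac{d}{dt}\|\nabla u\|_{L_2}^2$ controlled by $\|\sqrt\rho\, u_t\|_{L_2}^2$ plus nonlinear terms; simultaneously $\|\sqrt\rho\, u_t\|_{L_2}^2$, $\|\nabla^2 u\|_{L_2}^2$ and $\|\nabla P\|_{L_2}^2$ appear as good dissipative terms through the elliptic (Stokes) estimate for the stationary problem $-\mu\Delta u+\nabla P=-\rho\dot u$, $\div u=0$. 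The nonlinear terms $u\cdot\nabla u$ are handled by the $L_\infty(\R_+;H^1)$ bound and interpolation, which are small after the $L_2$ decay has kicked in. To extract exponential rates I would multiply the differential inequality by $e^{2\beta_2 t}$ and absorb, choosing $\beta_2<\beta_1$ so that the contribution of \eqref{eq:esul2} on the right-hand side is integrable; Gronwall then yields both \eqref{eq:esh1} and, after time integration, \eqref{eq:esh2}.

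For the time-weighted estimates \eqref{lemd2}, I would differentiate the momentum equation in time and test against $u_t$, producing an equation for $\frac{d}{dt}\|\sqrt\rho\, u_t\|_{L_2}^2$ with dissipation $\mu\|\nabla u_t\|_{L_2}^2$; multiplying by $t\,e^{2\beta_3 t}$ and integrating handles the singular weight $\sqrt t$ near $t=0$ while the exponential captures large-time decay, with $\beta_3<\beta_1$ chosen so the lower-order terms (already controlled with rate $\beta_2$) are absorbable. The elliptic estimate again converts the $\|\sqrt\rho\, u_t\|_{L_2}$ control into the $(\nabla^2 u,\nabla P)$ bound. Finally, \eqref{eq:qwerrewq} and \eqref{eq:nul1li} follow by a maximal-regularity / Sobolev-embedding argument: from the weighted $L_2$-in-time control of $\sqrt t\,(\nabla^2 u,\nabla P)$ and the $L_1$-type bounds, I would interpolate with the $L_\infty(\R_+;H^1)$ information and use the two-dimensional logarithmic embedding $\|\nabla u\|_{L_\infty}\lesssim \|\nabla u\|_{L_2}^{1/2}\|\nabla^2 u\|_{L_r}^{1/2}(\dots)$-type controls to upgrade to $L_\infty$ and $L_r$ integrability with the exponential weight, choosing $\beta_4<\beta_1$ small enough for the Cauchy--Schwarz splitting to close. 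The main obstacle throughout is the degeneracy of the density: every step where one would like a clean Poincar\'e bound must instead route through the vacuum-compatible inequalities of the Appendix, and the exponential rates $\beta_1>\beta_2>\beta_3,\beta_4$ must be successively decreased to keep all the lower-order remainders integrable against the dissipation.
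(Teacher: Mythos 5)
Your proposal follows essentially the same route as the paper: the only genuinely new point relative to \cite{DMP} is the $L_2$ decay \eqref{eq:esul2}, which the paper obtains exactly as you suggest, by replacing the classical Poincar\'e inequality with the density-adapted one \eqref{eq:pti} from the Appendix and closing a Gronwall inequality with $\beta_1=2\mu/(\rho_0^*c_{\T^d}^2\|\rho_0\|_{L_2}^2)$; the remaining items are obtained by the same $u_t$-testing, time-differentiation with the weight $te^{2\beta t}$, Stokes regularity and Gagliardo--Nirenberg interpolation that you describe. One small correction: the mean-value information that makes this work is not that $u$ has zero mean (divergence-free does not imply that, and zero mean of $u$ is not propagated by the flow), but that $\int_{\T^d}\rho u\,\dbar x$ is conserved and vanishes, which is precisely what Proposition \ref{prop1} with $a=\rho$ exploits.
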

\begin{proof} 
What  differs from \cite{DMP} is the proof of \eqref{eq:esul2}, as 
the classical  Poincar\'e inequality that we would like to apply to  $z=u$:
 \begin{equation}\label{eq:poinca}\|z\|_{L_2}\leq c_{\T^d}\|\nabla z\|_{L_2},\end{equation}
 is valid only for functions  with mean value zero.
 As a  a substitute, we shall use  \eqref{eq:pti} (see the Appendix). 
\smallbreak 
Now, testing the momentum equation by $u,$ we get
$$\frac 12\frac{d}{dt} \int_{\T^d} \rho \abs{u}^2\dbar x+\mu \int_{\T^d} \abs{\nabla u}^2\dbar x=0.$$
Using   \eqref{eq:pti} and the fact that$\|\rho(t)\|_{L_2}$ is time independent, we get
$$\frac{c_{\T^d}^2\|\rho_0\|_{L_2}^2}{2\mu}\frac{d}{dt} \int_{\T^d} \rho \abs{u}^2\dbar x+\int_{\T^d} \abs{ u}^2\dbar x\leq 0.$$
Multiplying both sides by $\rho^*_0$ and using the obvious facts that 
$\rho \abs{ u}^2\leq \rho^*_0 \abs{ u}^2$  and that $\|\rho(t)\|_{L_\infty}$ is time independent,
we obtain
$$\frac1{\beta_1}\frac{d}{dt} \int_{\T^d} \rho \abs{u}^2\dbar x+\int_{\T^d} \rho\abs{ u}^2\dbar x\leq 0
\with  \beta_1:=\frac{2\mu}{\rho^{*}_0c_{\T^d}^2
\|\rho_0\|_{L_2}^2},$$
 whence  \eqref{eq:esul2}.
\smallbreak
 Proving  \eqref{eq:esh1} and \eqref{eq:esh2} from  \eqref{eq:esul2} is the same as for Lemma 6 in \cite{DMP}. 
 Combining these two inequalities with 
 the following Gagliardo-Nirenberg inequalities:
$$\begin{aligned}
\|\wt z\|_{L_{\infty}(\T^2)}&\lesssim \|\wt z\|^{\frac 12}_{L_{2}(\T^2)}\|\nabla^2 z\|^{\frac 12}_{L_{2}(\T^2)}&\andf
&\|\nabla z\|_{L_{3}(\T^2)}&\!\!\!\lesssim \|\nabla z\|^{\frac 23}_{L_{2}(\T^2)}\|\nabla^2 z\|^{\frac 13}_{L_{2}(\T^2)},\\
\|\wt z\|_{L_{\infty}(\T^3)}&\lesssim \|\nabla z\|^{\frac 12}_{L_{2}(\T^3)}\|\nabla^2 z\|^{\frac 12}_{L_{2}(\T^3)}
&\andf &\|\nabla z\|_{L_{3}(\T^3)}&\!\!\!\lesssim \| \nabla z\|^{\frac 12}_{L_{2}(\T^3)}\|\nabla^2 z\|^{\frac 12}_{L_{2}(\T^3)},
\end{aligned}$$
ensures in addition that 
\begin{equation}\label{eq:esother}\int_0^{\infty} e^{2\wt\beta_2 t}\Bigl( \| u\|^2_{L_\infty}+\|\nabla u\|^2_{L_3}\Bigr)\,dt
\leq C_0,\qquad \wt\beta_2<\beta_2.\end{equation}
Indeed, the above inequalities combined with \eqref{eq:esh1}, \eqref{eq:esh2}  (and \eqref{eq:pti} if $d=2$)  give 
$$\int_0^\infty e^{2\wt\beta_2t}\|\wt u\|^2_{L_\infty(\T^2)}\,dt\leq C_0,$$
and we  have $\|u\|_{L_{\infty}(\T^2)}\leq |\bar u|+\|\wt u\|_{L_{\infty}(\T^2)}$
with, owing to \eqref{eq:meanvalue} and to \eqref{eq:poinca},  
\begin{equation}
    \label{eq:aunu}
    |\bar u|\leq c_{\T^2}\|\rho_0-1\|_{L_2}\|\nabla u\|_{L_2}.
\end{equation}
\smallbreak 
Proving the second part of \eqref{eq:esother} is left to the reader.
\medbreak
For proving \eqref{lemd2},  it is only a matter of 
showing that \begin{equation}\label{eq:star0}\norm{e^{\beta t}\sqrt{t}\sqrt{\rho}{u}_{t}}_{L_{\infty}(L_{2})}+\norm{e^{\beta t}\sqrt{t}\nabla {u}_{t}}_{L_{2}(0,T\times \T^2)}\leq C_0.\end{equation}
Note that the momentum equation of $(INS)$ may be seen as the following Stokes system:
\begin{equation}\label{eq:stokescns}-\Delta u+\nabla P=\rho\dot u\andf \div u=0\quad\hbox{in }\ \R_+\times\T^2.\end{equation}
Hence using  
\begin{equation}\label{eq:interl4d}
\|\wt z\|_{L_{4}(\T^d)}\lesssim \|\wt z\|^{1-\frac d4}_{L_{2}(\T^d)}\|\nabla z\|^{\frac d4}_{L_{2}(\T^d)},\qquad d=2,3,
\end{equation}
 implies  that 
$$\begin{aligned}
\norm{e^{\beta t}\sqrt{t}\nabla^{2}u}_{L_{2}}\!+\!\norm{e^{\beta t}\sqrt{t}\nabla P}_{L_{2}}&\leq C(\norm{e^{\beta t}\sqrt{t}\sqrt{\rho}u_t}_{L_{2}}\!+\!\norm{e^{\beta t}\sqrt{t}\sqrt{\rho}u\cdot \nabla u}_{L_{2}})\\
&\leq C(\norm{e^{\beta t}\sqrt{t}\sqrt{\rho}u_t}_{L_{2}}\!+\!\sqrt{\rho_0^*}e^{\beta t}\sqrt{t}\|u\|_{L_4}\| \nabla u\|^{1-\frac d4}_{L_{2}}\|\nabla^2 u\|^{\frac d4}_{L_2})
\end{aligned}
$$
whence, by Young inequality and  \eqref{eq:esh1}, \eqref{eq:aunu}, \eqref{eq:interl4d},
\begin{equation}\label{eq:dstar}
\|e^{\beta t}\sqrt{t}(\nabla^{2}u,\nabla P)\|_{L_{\infty}(L_2)}\lesssim C_0(\rho_0^*)^{\frac{2}{4-d}}\sqrt{t}e^{(\beta-2\beta_2)t}+ \norm{e^{\beta t}\sqrt{t}\sqrt{\rho}u_t}_{L_{\infty}(L_2)}.
\end{equation}
To prove \eqref{eq:star0},   apply $\d_{t}$ to the momentum equation of (INS). We get
 $$   \rho u_{tt}+\rho u\cdot \nabla u_{t}-\Delta u_{t}+\nabla P_{t}=-\rho_{t}\t{u}-\rho u_{t}\cdot \nabla u.$$
By taking the $L_{2}$ scalar product of the above equation with  $e^{2\beta t}u_t,$ we obtain
$$\displaylines{
\quad
\frac{1}{2}\frac{d}{dt}\int_{\T^{d}}\rho e^{2\beta t} t\abs{u_{t}}^{2}\dbar x+\int_{\T^{d}}e^{2\beta t} t\abs{\nabla u_{t}}^{2}\dbar x\leq \beta\int_{\T^{2}}e^{2\beta t} t\rho\abs{u_{t}}^{2}\dbar x+\frac 12 \int_{\T^{d}}e^{2\beta t} \rho\abs{u_{t}}^{2}\dbar x
\hfill\cr\hfill
-\int_{\T^{d}}e^{2\beta t} t\rho_{t}\t{u}\cdot u_{t}\dbar x-\int_{\T^{d}}e^{2\beta t} t\rho (u_{t}\cdot \nabla u)\cdot u_{t}\dbar x=\sum_{i=1}^4 I_{i}.}$$
For term $I_1,$ using  \eqref{eq:utl2}, \eqref{eq:esul2} and \eqref{eq:esh1}    gives:
$$
\begin{aligned}
I_1\leq &\beta \rho^*_0 te^{2\beta t}\|u_t\|^2_{L_2}\\
\leq &C\beta \rho^*_0 te^{2\beta t}((\rho^*_0)^2\|u\|^2_{L_2}\|\nabla u\|^2_{L_2}+c_{\T^d}^2\|\nabla u_t\|^2_{L_2})\\
\leq & C\beta \rho^*_0 c_{\T^d}^2   te^{2\beta t}\|\nabla u_t\|^2_{L_2}+ C^2_0\beta (\rho^*_0)^3 te^{2(\beta-2\beta_2) t}\cdotp
\end{aligned}
$$
For term $I_{3},$ the mass equation of $(INS)$ and integration by parts
yield
\begin{multline*}
    I_{3}=\int_{\T^{d}}e^{2\beta t} t \div(\rho u)\t{u}\cdot u_{\tau}\dbar x
     =I_{31}+I_{32}\\
     \with I_{31}:=-\int_{\T^{d}}e^{2\beta t} t (\rho u\cdot \nabla{\t{u}}) \cdot u_{\tau}\dbar x\andf I_{32}:=
    -\int_{\T^{d}}e^{2\beta t} t (\rho u\cdot \nabla{u_{\tau})\cdot  \t{u} }\dbar x.\end{multline*}
 Since $\nabla\t{u}=\nabla u_{t}+\nabla(u\cdot \nabla u)$, we may write
\begin{equation*}
    \begin{aligned}
    I_{31}
    =&-\int_{\T^{d}}e^{2\beta t} t (\rho u\cdot \nabla u_{t}) \cdot u_{t}\dbar x
    -\int_{\T^{d}}e^{2\beta t} t (\rho u\cdot  u\cdot \nabla^{2} u) \cdot u_{t}\dbar x\\
    &-\int_{\T^{d}}e^{2\beta t} t (\rho u\cdot \nabla u\cdot \nabla u) \cdot u_{t}\dbar x=:I_{311}+I_{312}+I_{313}
    \end{aligned}    \end{equation*}
    and 
    $$ I_{32}=-\int_{\T^{d}} e^{2\beta t} t(\rho u\cdot \nabla u_{t})\cdot u_{t}\dbar x-
   \int_{\T^{d}} e^{2\beta t} t(\rho u\cdot \nabla u_t)\cdot ( u\cdot \nabla u)\dbar x.$$
Before bounding  $I_{31}$, we point out a useful inequality that stems from  \eqref{eq:utl2} and embedding $H^1(\T^d)\hookrightarrow L_6(\T^d)$:
\begin{equation}\label{eq:esutl6}
\|e^{\beta t}\sqrt{t}\sqrt{\rho}u_t\|_{L_6(\T^d)}\leq e^{\beta t}\sqrt{t}\sqrt{\rho^*_0}(\rho^*_0\|u\|_{L_2(\T^d)}\|\nabla u\|_{L_2(\T^d)}+c_{\T^d}\|\nabla u_{t}\|_{L_2(\T^d)}).
\end{equation}
Now, by H\"older inequality, we have
$$I_{311}\leq \sqrt{\rho^*_0}\|\sqrt{t}e^{\beta t}\nabla u_t\|_{L_2(\T^d)}\|\sqrt{\rho t}e^{\beta t} u_t\|_{L_2(\T^d)}\|u\|_{L_\infty(\T^d)}.$$
For term $I_{312}$, applying \eqref{eq:esutl6} yields
\begin{equation*}
\begin{aligned}
I_{312}
\leq & \sqrt{\rho^*_0}\|\nabla^2 u\|_{L_2(\T^d)}\|u\|^2_{L_6(\T^d)}\|e^{\beta t}\sqrt{t}\sqrt{\rho}u_t\|_{L_6(\T^d)}\sqrt{t}e^{\beta t}\\
\lesssim& \|\nabla^2 u\|_{L_2(\T^d)}\|u\|^2_{L_6(\T^d)}\sqrt{t}e^{\beta t}(e^{\beta t}\sqrt{t}\|u\|_{L_2(\T^d)}\|\nabla u\|_{L_2(\T^d)}+\|e^{\beta t}\sqrt{t}\nabla u_{t}\|_{L_2(\T^d)})\cdotp
\end{aligned}
\end{equation*}
Let us just treat $I_{313}$ in the case $d=3$ (the case $d=2$ is left to the reader). 
Then,  taking advantage of Sobolev embedding $\H^1(\T^3)\hookrightarrow L_6(\T^3)$ and Young inequality, we get
$$
\begin{aligned}
I_{313}
\leq & \sqrt{\rho^*_0}\|\nabla u\|^2_{L_6}\|u\|_{L_6}\|e^{\beta t}\sqrt{t}\sqrt{\rho}u_t\|_{L_2}\sqrt{t}e^{\beta t}\\
\lesssim &\|e^{\beta t}\sqrt{t}\sqrt{\rho}u_t\|^2_{L_2}\|\nabla^2 u\|^2_{L_2}+e^{2\beta t}t\|\nabla^2 u\|^2_{L_2}\|\nabla u\|^2_{L_2} \cdotp
\end{aligned}
$$
Sobolev embedding directly implies  that
\begin{equation*}
 \begin{aligned}
   I_{31} &\leq \eps \|e^{\beta t}\sqrt{t}\nabla u_{t}\|^2_{L_2(\T^3)}+Cte^{2\beta t}\|\nabla^2 u\|_{L_2}\|\nabla u\|^2_{L_2}\|u\|_{L_2}\|\nabla u\|_{L_2} \\
    &\qquad\qquad+C\|e^{\beta t}\sqrt{t}\sqrt{\rho}u_t\|^2_{L_2}(\|u\|^2_{L_\infty}+\|\nabla^2 u\|^2_{L_2})\\
    &\qquad\qquad+Cte^{2\beta t} \|\nabla^2 u\|^2_{L_2}\|\nabla u\|^2_{L_2}(1+\|\nabla u\|^2_{L_2})\\
    &\leq \eps \|e^{\beta t}\sqrt{t}\nabla u_{t}\|^2_{L_2(\T^3)}+C_0^4te^{2(\beta-2\beta_2) t}\|\nabla^2 u\|_{L_2} \\
    &\qquad\qquad+C\|e^{\beta t}\sqrt{t}\sqrt{\rho}u_t\|^2_{L_2}(\|u\|^2_{L_\infty}+\|\nabla^2 u\|^2_{L_2})\\
    &\qquad\qquad+C_0^2te^{2(\beta-\beta_2) t}(1+e^{-2\beta_2t}) \|\nabla^2 u\|^2_{L_2}.
    \end{aligned}
\end{equation*}
For   $I_{32},$ embedding $H^1(\T^d)\hookrightarrow L_6(\T^d)$ ensures that for all $\eps>0,$
$$
\begin{aligned}
 I_{32} &\leq \sqrt{\rho^*_0}\|e^{\beta t}\sqrt{t} \nabla u_t\|_{L_2}(\|u\|_{L_{\infty}}\|e^{\beta t}\sqrt{t}\sqrt{\rho}u_t\|_{L_2}+\sqrt{\rho^*_0}e^{\beta t}\sqrt{t}\|\nabla u\|_{L_6}\|u\|^2_{L_6})\\
 &\lesssim  \eps \|e^{\beta t}\sqrt{t}\nabla u_{t}\|^2_{L_2(\T^2)}+C(\|u\|^2_{L_\infty}\|e^{\beta t}\sqrt{t}\sqrt{\rho}u_t\|^2_{L_2}+C_0^2te^{2(\beta-2\beta_2)t}(1+\|\nabla^2 u\|^2_{L_{2}})).
\end{aligned}  
$$
For $I_{4}$, thanks to \eqref{eq:esutl6},  \eqref{eq:esh1} and Young inequality, one has
$$ \begin{aligned}
I_{4}
&\leq \|e^{\beta t}\sqrt{t}\sqrt{\rho}u_t\|_{L_2}\|\nabla u\|_{L_3}\|e^{\beta t}\sqrt{t}\sqrt{\rho}u_t\|_{L_6}\\
  &\leq  \eps\|e^{\beta t}\sqrt{t}\nabla u_{t}\|^2_{L_2}+C \rho^*_0\bigl(\|e^{\beta t}\sqrt{t}\sqrt{\rho}u_t\|^2_{L_2}\|\nabla u\|^2_{L_3}+te^{2\beta t} (\rho^*_0)^2\|u\|^2_{L_2}\|\nabla u\|^2_{L_2}\bigr)\\
&\leq  \eps\|e^{\beta t}\sqrt{t}\nabla u_{t}\|^2_{L_2}+C \rho^*_0(e^{-2\beta_2 t}\|e^{\beta t}\sqrt{t}\sqrt{\rho}u_t\|^2_{L_2}\|e^{\beta_2 t}\nabla u\|^2_{L_3}+C_0^2te^{2(\beta-2\beta_2) t} (\rho^*_0)^2).    \end{aligned}$$
Putting  together with \eqref{eq:esul2}, \eqref{eq:esh1} and  choosing $\eps$ small enough 
we conclude that for all  $\beta\leq \beta_2$ there exists a positive  constant $c$ only depending on $\beta$ and $\beta_2$ such that
\begin{multline*}
\frac{d}{dt}\int_{\T^{d}}\rho e^{2\beta t} t\abs{u_{t}}^{2}\dbar x+\int_{\T^{d}}e^{2\beta t} t\abs{\nabla u_{t}}^{2}\dbar x
\leq C\|\sqrt{\rho}e^{\beta t} u_t\|_{L_2}^2\\+C_0^2t e^{-ct} (1+\|\nabla^2 u\|^2_{L_2}+\|u\|^2_{L_{\infty}})
+C\|e^{\beta t}\sqrt{t}\sqrt{\rho}u_t\|^2_{L_2}(\|u\|^2_{L_\infty}+\|\nabla^2 u\|^2_{L_2}+\|\nabla u\|^2_{L_3}).\end{multline*}
Hence, using Gronwall inequality and \eqref{eq:esh2}, \eqref{eq:esother} ensures \eqref{eq:star0} and thus \eqref{lemd2}.
\smallbreak
Granted with this inequality, it is easy to establish the last two inequalities of the theorem. 
Clearly, the second one   follows from the first one with $r>d,$
Inequality \eqref{eq:esh1} and  the following Gagliardo-Nirenberg inequality: 
$$\|\nabla z\|_{L_\infty}\leq C\|\nabla z\|^\theta_{L_2}\|\nabla^2 z\|^{1-\theta}_{L_r} \andf \theta=\frac{2(r-d)}{2(r-d)+rd}\cdotp $$ 
In order to prove \eqref{eq:qwerrewq}, we observe that for $2<r<\infty,$ we have 
\begin{equation}\label{eq:gnlr}
\|z\|_{L_r}\leq C \|z\|^{\gamma}_{L_2}\|\nabla z\|^{1-\gamma}_{L_2}\with \gamma=1+\frac dr-\frac d2\cdotp
\end{equation} 
Now, since $(u,\nabla P)$ satisfies \eqref{eq:stokescns}, 
using the $L_r$ regularity theory for the Stokes system and \eqref{eq:gnlr} implies  that
$$\int_0^\infty e^{\beta t}
\|(\nabla^2 u,\nabla P)\|_{L_r}\leq C \sqrt{\rho^*_0}\int_0^\infty e^{\beta t}\bigl(
\|\sqrt{\rho}u_t\|_{L_r}+ \sqrt{\rho^*_0}\|u\|_{L_\infty}\| \nabla u\|^\gamma_{L_2}\|\nabla^{2}u\|^{1-\gamma}_{L_2}\,dt\bigr)\cdotp$$
On the one hand, by using \eqref{eq:gnlr}, for all $\beta<\wt\beta_2,$ the second term may be bounded by means of 
\eqref{eq:esh1}, \eqref{eq:esh2}, \eqref{eq:esother}. 
On the other hand, owing to   \eqref{eq:gnlr} and \eqref{lemd2}, we have for all $\beta<\beta_3,$
$$\begin{aligned}
\int_0^{\infty}e^{\beta t}\|\sqrt{\rho}&u_t\|_{L_r}\,dt
= \int_0^\infty \frac{1}{t^{\frac{1-\gamma}{2}}} e^{(\beta-\gamma \beta_2-(1-\gamma)\beta_3) t}\|e^{\beta_2 t}\sqrt{\rho}u_t\|^{\gamma}_{L_2}\|e^{\beta_3 t}\sqrt{t}\nabla u_t\|^{1-\gamma}_{L_2}\,dt\\
&\lesssim \biggl(\int_0^1 \frac{dt}{t^{1-\gamma}}+\int_1^{\infty} e^{2(\beta-\beta_3) t}\,dt\biggr)^{\frac 12} \|e^{\beta_2 t}\sqrt{\rho}u_t\|^{\gamma}_{L_2(\R_+;L_2)}\|e^{\beta_3 t}\sqrt{t}\nabla u_t\|^{1-\gamma}_{L_2(\R_+;L_2)}\\
&\leq C_0.
\end{aligned}$$
Putting together the above inequalities completes the proof of the theorem. 
\end{proof}
\begin{remark}
 As in \cite{DMP}, we expect the above results to imply stability with respect to the data. 
\end{remark}


\section{The compressible case}\label{s:CNS}

As a starting point of our study of (CNS), let us recall the result of \cite{DM-ripped}. 
 \begin{theorem}\label{themc1}
 Consider any nonnegative bounded function $\rho_0$ fulfilling 
\begin{equation}\label{eq:inrCNS} 
\rho_{0,*}:=\underset{x\in\T^2}{\inf}\rho_0\geq0\andf \rho^*_0:=\underset{x\in\T^2}{\sup}\rho_0<\infty,
\end{equation}
and  vector field $u_0$ in $H^1(\T^2)$ satisfying for some fixed $K>0,$
\begin{equation}\label{eq:div}
\|\div u_0\|_{L_2}\leq K\nu^{-1/2}\andf\int_{\T^2} \rho_0 u_0\dbar x=0.\end{equation} 
Let us define the \emph{viscous effective flux} $G$ by the relation:
\begin{equation}\label{def:G} G:=\nu \div u-P.\end{equation}
There exists a positive number $\nu_0$ depending only on $\T^2,$  $\kappa,$ $K,$ $\gamma,$ $\mu,$ $\|\sqrt{\rho_0}u_0\|_{L_2},$ $\|\nabla u_0\|_{L_2}$ and $\|\rho_0\|_{L_\infty}$ such that if $\nu>\nu_0$ then System (CNS)  with pressure law \eqref{eq:isentro} admits a global-in-time solution $(\rho,u)$ fulfilling
$$    \begin{aligned}
    &\rho \in L_\infty(\R_+\times \T^2)\cap \cC(\R_+;L_p(\T^2)) \ \hbox{ for all }\  p<\infty \andf \sqrt{\rho}u\in \cC(\R_+;L_2(\T^2)),\\
    & u\in L_\infty(\R_+;H^1(\T^2)),\ (\nabla^2 \p u,\nabla G,\sqrt{\rho} \t u)\in L_2(\R_+\times \T^2),\\
    &\sqrt{\rho t} \t u \in L_{\infty,loc}(\R_+;L_2(\T^2))\andf \sqrt{t}\nabla \t u\in L_{2, loc}(\R_+;L_2(\T^2)).
    \end{aligned}$$
In addition, both $\div u$ and ${\rm curl}\, u$ are in $L_{r,loc}(\R_+;L_\infty(\T^2))$ for all $r<2,$
 and \eqref{conlaws}, \eqref{eq:energy} are satisfied  on $\R_+.$  
\end{theorem}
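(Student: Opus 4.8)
The plan is to construct the solution by a regularization-and-compactness argument and to read off the stated regularity from a hierarchy of a priori bounds that are uniform in the approximation parameter. I would first smooth the data, replacing $\rho_0$ by $\rho_0^n:=\rho_0*\chi_n+1/n$ (smooth and bounded away from vacuum) and $u_0$ by some $u_0^n\to u_0$ in $H^1$ still obeying \eqref{eq:div}; for such data the classical local theory produces smooth solutions $(\rho^n,u^n)$, and the whole problem is to show that they are global with bounds independent of $n$. Every estimate rests on the effective flux reformulation of the momentum equation: writing $\Delta u=\Delta\p u+\nabla\div u$ and recalling $G=\nu\div u-P$, the momentum equation becomes
\begin{equation*}
\rho\,\t u=\mu\Delta\p u+\nabla G,
\end{equation*}
whence the two elliptic identities $\Delta G=\div(\rho\,\t u)$ and $\mu\Delta\curl u=\curl(\rho\,\t u)$. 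These turn $L_2$ control of $\rho\,\t u$ into $L_2$ control of $\nabla G$ and $\nabla^2\p u$, and this gain of one derivative on the ``good unknowns'' $G$ and $\curl u$ is the engine of the whole scheme.

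Next I would organize the energy hierarchy. The bottom layer is the balance \eqref{eq:energy}, giving $\sqrt\rho\,u\in L_\infty(L_2)$, $\nabla\p u,\div u\in L_2(L_2)$ and a uniform bound on the potential energy $e(\rho)$. The second layer comes from testing the momentum equation by $\t u$; after integrations by parts this yields an inequality of the form
\begin{equation*}
\frac{d}{dt}\Bigl(\mu\norm{\nabla\p u}_{L_2}^2+\nu\norm{\div u}_{L_2}^2+(\text{pressure terms})\Bigr)+\norm{\sqrt\rho\,\t u}_{L_2}^2\lesssim(\text{cubic terms}),
\end{equation*}
the cubic terms being handled in two dimensions by Gagliardo--Nirenberg combined with the elliptic gain above, the divergence (compressible) part being kept under control by the largeness of $\nu$. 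This delivers $\nabla u\in L_\infty(L_2)$ and $\sqrt\rho\,\t u\in L_2(L_2)$, hence $\nabla^2\p u,\nabla G\in L_2(L_2)$. A third, time-weighted layer --- differentiating the momentum equation in $t$ and testing by $t\,\t u$, exactly as in the computation leading to \eqref{eq:star0} in the incompressible case --- produces $\sqrt{\rho t}\,\t u\in L_{\infty,loc}(L_2)$ and $\sqrt t\,\nabla\t u\in L_{2,loc}(L_2)$ \emph{without} any compatibility condition, and upgrades $\nabla G,\nabla^2\p u$ to higher time integrability, whence $\div u,\curl u\in L_{r,loc}(L_\infty)$ for $r<2$.

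The crux is the control of the density. Along the flow of $u$ the mass equation reads $\frac{d}{dt}\rho=-\rho\,\div u=-\frac{\rho}{\nu}(G+\kappa\rho^\gamma)$: the dissipative term $-\frac{\kappa}{\nu}\rho^{\gamma+1}$ pushes large values down, while the perturbation $-\frac{\rho}{\nu}G$ is small precisely because $\nu$ is large and because $\int_0^t\norm{G(\tau)}_{L_\infty}\,d\tau$ is controlled by the $L_r(L_\infty)$ bound on $\div u$ and the boundedness of $P=\kappa\rho^\gamma$. Feeding this into a continuation argument, and choosing $\nu_0$ large enough in terms of the quantities listed in the statement, one closes a uniform upper bound $\rho\leq C$; the same ODE, now linear in $\rho$ with coefficients bounded once $\rho$ is controlled from above, propagates the lower bound and shows that no vacuum is created when $\rho_{0,*}>0$. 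I expect this step --- obtaining a \emph{global} uniform upper bound on a merely bounded density from only integral-in-time control of $G$, while tracking that every constant depends solely on the listed data --- to be the main obstacle, and the place where the smallness $\norm{\div u_0}_{L_2}\leq K\nu^{-1/2}$ and the largeness of $\nu$ are genuinely used.

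Finally I would pass to the limit. The uniform bounds yield weak-$*$ limits $(\rho,u)$, and Aubin--Lions compactness handles the velocity nonlinearities; the delicate point is the strong convergence of $\rho^n$, which I would obtain from the weak continuity of the effective flux (a Lions--Feireisl type argument, considerably simplified here since $\rho$ stays bounded and $G$ enjoys the elliptic regularity above) together with a renormalization argument for the transport equation. Passing to the limit in the momentum and mass equations then produces a global solution with all the stated properties, and the conservation laws \eqref{conlaws} and the energy balance \eqref{eq:energy} survive the limit.
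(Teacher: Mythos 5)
The first thing to say is that the paper contains no proof of this statement: Theorem~\ref{themc1} is quoted verbatim from \cite{DM-ripped} as the starting point of Section~\ref{s:CNS} (``let us recall the result of \cite{DM-ripped}''), so there is no in-paper argument to compare yours against. Your proposal is therefore a blind reconstruction of the proof of the cited work, and in outline it matches that proof: regularization of the data, the effective-flux identity $\rho\,\t u=\mu\Delta\p u+\nabla G$ turning $L_2$ bounds on $\sqrt\rho\,\t u$ into $L_2$ bounds on $\nabla G$ and $\nabla^2\p u$, the three-tier energy hierarchy with time weights and no compatibility condition, and a compactness passage to the limit that is indeed much simpler than Lions--Feireisl because the density stays bounded.

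The one place where your sketch has a genuine gap is the step you yourself flag as the crux, the uniform bound on $\rho$. The mechanism you describe --- integrating $\frac{d}{dt}\rho=-\frac{\rho}{\nu}(G+\kappa\rho^\gamma)$ along characteristics and controlling $\int_0^t\|G(\tau)\|_{L_\infty}\,d\tau$ from the $L_{r,loc}(L_\infty)$ bound on $\div u$ --- does not close. That bound holds only for $r<2$, so the time integral of $\|G\|_{L_\infty}$ grows like $t^{1-1/r}$ and is not uniform in $t$; worse, the constants in every higher-order estimate (hence in the $L_{r,loc}(L_\infty)$ bound itself) already depend on $\rho^*=\sup\rho$, so the argument is circular unless one sets up a quantitative bootstrap. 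The device that actually works --- and which you can see the present paper using in its Step~6 for the \emph{lower} bound --- is Desjardins' trick of transporting $\varphi:=\log\rho-\nu^{-1}(-\Delta)^{-1}\div(\rho u)$ rather than $\log\rho$: the dangerous $\wt G/\nu$ source term is then replaced by the commutator $\nu^{-1}[u^j,(-\Delta)^{-1}\d_i\d_j](\rho u^i)$, whose $L_\infty$ norm is controlled via the Coifman--Lions--Meyer--Semmes inequality and Sobolev embedding by $\|\nabla u\|_{L_{12}}\|\rho u\|_{L_4}$, a quantity that \emph{is} time-integrable from the energy hierarchy; one then closes with a continuity/bootstrap argument in which the largeness of $\nu$ and the smallness $\|\div u_0\|_{L_2}\leq K\nu^{-1/2}$ (which keeps the $\nu\|\div u\|_{L_2}^2$ part of the $H^1$ functional of order one at $t=0$) are used. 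A minor further point: in the compressible setting the time-weighted layer is obtained by applying the convective derivative $D/Dt$ to the momentum equation and testing with $t\,\t u$, not by applying $\d_t$ as in the incompressible computation you reference, because $\d_t$ does not commute well with the transport of $\rho$.
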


  Our main goal  is to prove that the solutions constructed in the above theorem converge exponentially fast to 
   $(\bar\rho,0),$ with $\bar\rho$ being the mean value of $\rho_0$ on $\T^2,$  when the time goes to infinity.
In this direction, let us state a first result. 
   \begin{theorem}\label{them:edcns}
      Let $(\rho,u)$ be a solution to (CNS) given  by Theorem \ref{themc1}.  
      Set 
     \begin{equation}\label{def:tilde}
     \wt G:=G-\bar G\andf \wt P:=P-\bar P\with \bar G:=\int_{\T^2} G\dbar x \andf  \bar P:=\int_{\T^2} P\dbar x.
    \end{equation} 
      There exist two  positive constants $C_0$  and $\nu_0$ (depending on $E_0,$ $\|\nabla u_0\|_{L_2},$ 
 $\rho^*_0$,   $\mu,$ $\kappa,$ $\T^2$ and $\gamma,$ but independent of $t$),  $\alpha_1$ depending only  on  $\mu,$ $\T^2,$ and $\rho^*_0,$ and $c_{\rho_0^*}$ depending only on $\T^2,$ $\rho^*_0,$ $\kappa$  and $\gamma,$  such that if $\nu>\nu_0,$ then we have  for all  $t\geq 0$:
\begin{enumerate}
    \item Exponential decay of the energy:
    \begin{equation}\label{eq:energy0} E(t)\leq 2E_0 e^{-\alpha_0 t}\with
     \alpha_0:=c_{\rho_0^*}
\min\biggl(\frac{\mu}{\bar\rho},\frac{\bar\rho   P'(\bar\rho)}\nu\biggr),
  \andf      \end{equation}
        \begin{equation} \label{eq:energy1}\qquad D(t)\leq 2D(0) e^{-\alpha_1t} + C_0\nu^{-1}e^{-\alpha_0 t}\with D:=\frac{1}{2} \int_{\T^2} \rho \abs{u}^2\dbar x
    +\frac{1}{\nu}\int_{\T^2} e\dbar x.\end{equation}
    \item Exponential decay of $H^1$ norms and of the viscous effective flux: 
\begin{equation}\label{eq:decayH1}   {\nu}\|\div u(t)\|_{L_2}^2+\|\nabla \p u(t)\|_{L_2}^2+\nu^{-1}\|\rG(t)\|_{L_2}^2
\leq  C_{0} \Bigl(e^{-\frac{\alpha_1}2 t}+\nu^{-1}e^{-\frac{\alpha_0}2t}\Bigr),\quad t\geq0.\end{equation}
    \item  Exponential decay of $L_2$-in-time norms:
    \begin{equation}\label{eq:decayL2t}\int^{\infty}_0 e^{\frac{\alpha_0}4 t}\biggl(\int_{\T^2}\bigl(\rho \abs{u_t}^2+ \abs{\div u}^2+\mu^2 \abs{\nabla^2 \p u}^2
        +\abs{\nabla G}^2\bigr)\dbar x+\nu^{-1}|\bar P(t)-1|^2\biggr)dt\leq C_0.\end{equation}
            \item Exponential decay of the pressure: 
   \begin{equation}\label{eq:decayP}   \|\wt P(t)\|_{L_2}\leq C_0 e^{-\frac{\alpha_0}4t}.\end{equation}
    \item Lower  and upper  bound of the density: 
    \begin{equation}\label{eq:rhobound}  \frac12\rho_{0,*}\leq \rho(t,x)    \leq C_0 \quad\hbox{for a.e. } (t,x)\in\R_+\times\T^2.\end{equation}
\end{enumerate}
\end{theorem}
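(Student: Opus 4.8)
The plan is to build everything on the \emph{viscous effective flux} $G=\nu\div u-P$ and on two algebraic observations valid on the torus. First, $\int_{\T^2}\div u\,\dbar x=0$ forces $\bar G=-\bar P$, so that
\[
\nu\div u=G+P=\wt G+\wt P .
\]
Second, applying $\div$ and the Leray projector $\p$ to the momentum equation yields the elliptic identities $\Delta G=\div(\rho\dot u)$ and $\mu\Delta\p u=\p(\rho\dot u)$; hence, $\wt G$ being mean-free, elliptic regularity and Poincar\'e give $\|\wt G\|_{L_2}\lesssim\|\nabla G\|_{L_2}\lesssim\sqrt{\rho^*_0}\,\|\sqrt\rho\dot u\|_{L_2}$ and $\mu\|\nabla^2\p u\|_{L_2}\lesssim\sqrt{\rho^*_0}\,\|\sqrt\rho\dot u\|_{L_2}$. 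I would also record the coercivity of the relative energy: since $\rho$ is bounded, $e(\rho)\simeq(\rho-\bar\rho)^2$ and $\wt P\simeq P'(\bar\rho)(\rho-\bar\rho)$, whence $\|\wt P\|_{L_2}^2\simeq\bar\rho P'(\bar\rho)\int_{\T^2}e\,\dbar x$.

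For the energy decay \eqref{eq:energy0}, I start from the balance \eqref{eq:energy}, i.e. $\frac{d}{dt}E+\mu\|\nabla\p u\|_{L_2}^2+\nu\|\div u\|_{L_2}^2=0$. Writing $u=\p u+\Q u$ and using the weighted Poincar\'e inequality \eqref{eq:pti} (legitimate since $\int_{\T^2}\rho u\,\dbar x=0$ is conserved) together with $\|\nabla\Q u\|_{L_2}=\|\div u\|_{L_2}$, the kinetic energy is controlled by the two dissipation terms and decays at the fast rate $\sim\mu/\rho^*_0$. For the potential part I use $\nu\|\div u\|_{L_2}^2=\nu^{-1}\|\wt G+\wt P\|_{L_2}^2\ge\tfrac1{2\nu}\|\wt P\|_{L_2}^2-\nu^{-1}\|\wt G\|_{L_2}^2$ and the coercivity above to extract $\alpha_0\int e$ at the slow rate $\alpha_0\simeq\bar\rho P'(\bar\rho)/\nu$. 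This produces
\[
\frac{d}{dt}E+\alpha_0 E\le C\nu^{-1}\|\sqrt\rho\dot u\|_{L_2}^2 .
\]
The crux of the whole theorem is to close this by proving $\int_0^\infty e^{\alpha_0 t}\|\sqrt\rho\dot u\|_{L_2}^2\,dt\lesssim\nu$. I expect to obtain it from a weighted material-derivative (Hoff) estimate: testing the momentum equation by $\dot u$ and reorganizing through $G$ produces a Lyapunov functional equivalent to $B:=\nu\|\div u\|_{L_2}^2+\|\nabla\p u\|_{L_2}^2+\nu^{-1}\|\wt G\|_{L_2}^2$ whose dissipation is exactly $\|\sqrt\rho\dot u\|_{L_2}^2$. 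The difficulty, and the only genuinely hard point, is that the cubic nonlinearities ($\int\rho\,u\cdot\nabla u\cdot\dot u$ and the $(\nabla u)^3$-type terms) must be absorbed by two-dimensional Gagliardo--Nirenberg inequalities and by the smallness furnished by $\nu>\nu_0$; this is the mechanism already underlying \cite{DM-ripped}, here upgraded by inserting the weight $e^{\alpha_0 t}$ and coupling back to the energy inequality. A Gronwall argument then yields \eqref{eq:energy0} together with the weighted bound on $\sqrt\rho\dot u$.

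Once $E\lesssim e^{-\alpha_0 t}$ is known, the remaining estimates follow by more routine manipulations. For \eqref{eq:energy1} I test the momentum equation by $u$, keep the weight $\nu^{-1}$ in front of the potential energy, and reorganize into $\frac{d}{dt}D+\alpha_1 D\le C\nu^{-1}\int e\le C_0\nu^{-1}e^{-\alpha_0 t}$ with $\alpha_1\simeq\mu/\rho^*_0$; since $\alpha_0<\alpha_1$, Gronwall gives the two-rate decay. Estimate \eqref{eq:decayH1} is precisely the decay of the functional $B$, whose three constituents carry the fast rate $\alpha_1/2$ except for the $\wt P$ contribution hidden in $\nu\|\div u\|_{L_2}^2$, which inherits the slow $\nu^{-1}e^{-\alpha_0 t/2}$ correction. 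The time-integrated bound \eqref{eq:decayL2t} comes from integrating the dissipation identities against $e^{\alpha_0 t/4}$: the $\sqrt\rho u_t$, $\nabla^2\p u$ and $\nabla G$ terms are controlled through $\|\sqrt\rho\dot u\|_{L_2}$ via the elliptic identities of the first paragraph, $\|\div u\|_{L_2}$ through $B$, and $\nu^{-1}|\bar P-1|^2$ through $|\bar P-1|\lesssim\|\rho-\bar\rho\|_{L_2}^2\lesssim\int e$ and conservation of mass. The pressure estimate \eqref{eq:decayP} is then immediate from $\|\wt P\|_{L_2}^2\lesssim\int e\le E\le 2E_0 e^{-\alpha_0 t}$.

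Finally, for the density bounds \eqref{eq:rhobound} I integrate the mass equation along the flow $X$ of $u$. Since $\frac{d}{ds}\log\rho(s,X(s))=-\div u=-\nu^{-1}(\wt G+\wt P)(s,X(s))$, the exponent is controlled by $\nu^{-1}\int_0^\infty(\|\wt G(s)\|_{L_\infty}+\|\wt P(s)\|_{L_\infty})\,ds$, whose finiteness follows from two-dimensional interpolation of $\wt G,\wt P$ between their decaying $L_2$ norms and the $L_2$-in-time control of $\nabla G$ and $\nabla^2\p u$ from \eqref{eq:decayL2t}. Choosing $\nu_0$ so large that this integral stays below $\log 2$ gives $\tfrac12\rho(0,X(0))\le\rho(t,x)\le2\rho(0,X(0))$ along every backward characteristic, hence both $\rho\ge\tfrac12\rho_{0,*}$ and the uniform upper bound; in particular no vacuum is created. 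Every estimate past the second paragraph rests on the weighted Hoff bound, so that is where I would concentrate the effort.
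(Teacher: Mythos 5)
Your overall architecture inverts the paper's and, in two places, this creates real gaps rather than just a different route. For the energy decay \eqref{eq:energy0} you extract the damping of the potential energy from $\nu\|\div u\|_{L_2}^2=\nu^{-1}\|\wt G+\wt P\|_{L_2}^2\ge \frac1{2\nu}\|\wt P\|_{L_2}^2-\nu^{-1}\|\wt G\|_{L_2}^2$, which leaves $\nu^{-1}\|\sqrt\rho\,\t u\|_{L_2}^2$ on the right and forces you to prove the weighted Hoff estimate \emph{first}. But in the paper the weighted Hoff estimate only closes because the cubic term carries the factor $\|\rho^{1/4}u\|_{L_4}^4$, whose decay \eqref{eq:esucl4b} is itself deduced from \eqref{eq:energy1}, hence from the energy decay you are trying to establish; you would need a genuinely coupled two-functional Gronwall argument that you never write down. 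Moreover, the coercivity $\|\wt P\|_{L_2}^2\gtrsim\int_{\T^2}e\,\dbar x$ on which your damping rests is not Lemma \ref{lem:eseq}: since $\bar P\neq P(\bar\rho)$ when $\gamma>1$, one has $\|\wt P\|_{L_2}^2=\|P-P(\bar\rho)\|_{L_2}^2-(\bar P-1)^2$ with $\bar P-1=(\gamma-1)\int e$, and the subtracted term need not be negligible for large-oscillation densities. The paper sidesteps both difficulties by modifying the energy with the cross term $\check E=E-\frac{\eta}{\nu}\int_{\T^2}(-\Delta)^{-1}\div(\rho u)\cdot a\,\dbar x$: differentiating it and using the momentum equation yields the coercive quantity $\frac1\nu\int aP\ge\frac1\nu\|a\|_{L_2}^2$ \emph{without ever producing $\t u$}, so \eqref{eq:energy0} follows from the energy balance alone, before any material-derivative estimate.

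The second gap is the lower bound in \eqref{eq:rhobound}. Integrating $\frac{d}{ds}\log\rho=-\nu^{-1}(\wt G+\wt P)$ along characteristics requires $\nu^{-1}\int_0^\infty\|\wt P(s)\|_{L_\infty}\,ds$ to be small. This is unreachable in the present framework: $\rho$ is only bounded, so $P$ has no spatial regularity to interpolate against (your appeal to $\nabla G$ and $\nabla^2\p u$ controls $\wt G$ but says nothing about $\wt P$), $\|\wt P(t)\|_{L_\infty}$ is merely bounded and does not tend to zero for discontinuous densities, and since $\alpha_0\sim\nu^{-1}$ even an optimistic use of the $L_2$ decay gives $\nu^{-1}\int_0^\infty e^{-\alpha_0 t/4}\,dt=O(1)$, not $o(1)$. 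The paper's Step 6 is built precisely to avoid this: following Desjardins it transports $\varphi=\log\rho-\nu^{-1}(-\Delta)^{-1}\div(\rho u)$, so that the $\wt G$ contribution is traded for a time-integrable commutator $[u^j,(-\Delta)^{-1}\d_i\d_j]\rho u^i$, and the remaining $-\nu^{-1}P$ is converted into a \emph{damping} term for $\varphi^-$ via the convexity inequality $\rho^\gamma\le 1+M\log\rho$ and the sign $\bar P-1\ge0$, rather than a source to be integrated in $L_\infty$. (The upper bound, by contrast, is simply quoted from \cite{DM-ripped} and needs no decay.) Until these two points are repaired, the proposal does not constitute a proof.
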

\begin{remark} Looking at the definition of $\alpha_0,$ the reader may find  it odd that 
the rate of decay deteriorates when $\nu$ increases.   This may be already observed on 
the linearized compressible Navier-Stokes about the constant
state $(\bar\rho, 0).$ Assuming for simplicity that $\bar\rho=1$ and  $P'(\bar\rho)=1$ and that 
$\T^2$ is the torus with lengths $2\pi$ in the two directions,  the eigenvalues corresponding to the frequency $k\in\Z^2$ are:
\begin{itemize}
\item  $\mu|k|^2$ for $\p u$;\smallbreak
\item
$\lambda^\pm_k:= -\frac{\nu |k|^2}{2}(1\pm R_k)$ with $R_k:=\sqrt{1-\frac{4}{\nu^2 |k|^2}}$   for the system satisfied by $(a,\div u).$
\end{itemize}
In the asymptotics $\nu\to\infty$ then $\lambda_k^-\to -1/\nu$ so 
that the overall decay of the solution is only $e^{-\frac t\nu}.$
At the same time, $G$ tends to be a good approximation of
the parabolic mode with diffusion $\nu,$ and it is thus expected that it decays faster. 
This has been indeed partially justified in the small data case (see \cite{DM-adv}). 
Whether it is still true in our context of large data with vacuum is an open question.
\end{remark}

Our next result states that the time weighted quantities appearing at the end of Theorem \ref{themc1}
also have exponential decay.
\begin{theorem} \label{them:edcns2} Under the assumptions of Theorem \ref{them:edcns} we have in addition
\begin{multline*}
\underset{t\in \R_+}{\sup} t\bigl(e^{\alpha_0 t}\|\sqrt \rho\, \t u\|_{L_2}^2+\|\nabla{\mathbb P}u(t)\|_{L_2}^2
+\nu^{-1}\|G(t)\|_{L_2}^2\bigr)
\\+\int_0^\infty te^{\alpha_0 t}\bigl(\|\nabla G\|_{L_2}^2+\|\nabla^2 {\mathbb P} u\|_{L_2}^2+
\|\nabla \t u\|_{L_2}^2+\|\div\t u\|_{L_2}^2\bigr)dt\leq C_{0}\end{multline*}
with  $\alpha_0$  defined as in Theorem \ref{them:edcns}
(but, possibly, smaller). 

Furthermore,  for all small enough $\theta>0,$  we have
$$\int_0^\infty e^{{\alpha_0}t}\|\rG\|_{L_\infty}\,dt\leq C_{0}\,\nu^{\theta}\andf\int_0^\infty e^{{\alpha_0}t}\|\nabla \p u\|_{L_\infty}\,dt\leq C_{0}.$$
\end{theorem}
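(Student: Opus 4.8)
The plan is to promote the local-in-time bounds for the convective derivative $\dot u$ listed at the end of Theorem \ref{themc1} to global weighted estimates. The weight $t$ is there to absorb the initial singularity of $\dot u$ (the data being merely $H^1$ and allowing vacuum), while the exponential factor $e^{\alpha_0 t}$ records the long-time decay already quantified in Theorem \ref{them:edcns}. The natural object is the equation for $\dot u$ obtained by applying the material derivative $\dot{(\cdot)}=\partial_t+u\cdot\nabla$ to the momentum equation of (CNS); using the continuity equation in the form $\dot\rho=-\rho\div u$ and the identity $\dot P=-\gamma P\div u$, this yields
$$\rho\ddot u-\mu\Delta\dot u-(\lambda+\mu)\nabla\div\dot u=\mathcal{R},$$
where $\mathcal{R}$ collects the Hoff commutators, schematically of the types $\nabla u\ast\nabla\dot u$, $u\ast\nabla^2 u\ast\nabla u$, $(\nabla u)^{\ast3}$ and $\div u\,\nabla P-(\nabla u)^{\!\top}\nabla P$ (after one integration by parts). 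I will also record the elliptic splitting coming from the definition \eqref{def:G}, namely $\rho\dot u=\nabla G+\mu\Delta\mathbb{P}u$, equivalently $\nabla G=\mathbb{Q}(\rho\dot u)$ and $\mu\Delta\mathbb{P}u=\mathbb{P}(\rho\dot u)$, which at the end will let me trade $\dot u$ for $\nabla G$ and $\nabla^2\mathbb{P}u$.

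The core is a weighted energy estimate for $\dot u$: I take the $L_2$ scalar product of the equation above with $te^{\alpha_0 t}\dot u$ and integrate in space. By the continuity equation the leading term integrates to $\frac{d}{dt}\bigl(\frac12 te^{\alpha_0 t}\int_{\T^2}\rho|\dot u|^2\dbar x\bigr)$, and the second-order operator produces the coercive dissipation $te^{\alpha_0 t}\bigl(\mu\|\nabla\dot u\|_{L_2}^2+(\lambda+\mu)\|\div\dot u\|_{L_2}^2\bigr)$, which by the Hodge decomposition dominates $\min(\mu,\nu)\,te^{\alpha_0 t}\|\nabla\dot u\|_{L_2}^2$. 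On the right-hand side there remain the weight-derivative term $\frac12(1+\alpha_0 t)e^{\alpha_0 t}\int_{\T^2}\rho|\dot u|^2\dbar x$ and the contribution of $\mathcal{R}$. The low-order piece $e^{\alpha_0 t}\|\sqrt\rho\,\dot u\|_{L_2}^2$ is, after possibly decreasing $\alpha_0$, integrable in time by item (iii) of Theorem \ref{them:edcns} together with the generalized Poincar\'e inequalities of the Appendix, which bound $\|u\cdot\nabla u\|_{L_2}$ and thus reconcile $u_t$ and $\dot u$.

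The crux is the estimate of $\int_{\T^2}te^{\alpha_0 t}\,\mathcal{R}\cdot\dot u\,\dbar x$. Here I would systematically invoke the two-dimensional Gagliardo--Nirenberg inequality $\|f\|_{L_4}^2\lesssim\|f\|_{L_2}\|\nabla f\|_{L_2}$ (with the torus mean-value corrections) to split each term into an $\eps\,te^{\alpha_0 t}\|\nabla\dot u\|_{L_2}^2$ part, absorbed by the dissipation, and a remainder carrying an exponentially decaying prefactor furnished by the already established decay of $\|\nabla\mathbb{P}u\|_{L_2}$, $\|\nabla^2\mathbb{P}u\|_{L_2}$, $\|\nabla G\|_{L_2}$ and $\|\div u\|_{L_2}$ in Theorem \ref{them:edcns}; the smallness $\|\div u\|_{L_2}=O(\nu^{-1/2})$ and the largeness of $\nu$ keep the residual constants uniform. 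A Gronwall argument, fed with the $L_2$-in-time bounds of item (iii), then delivers both the supremum bound on $te^{\alpha_0 t}\|\sqrt\rho\,\dot u\|_{L_2}^2$ and the integral bound on $\int_0^\infty te^{\alpha_0 t}\bigl(\|\nabla\dot u\|_{L_2}^2+\|\div\dot u\|_{L_2}^2\bigr)\,dt$. The remaining quantities then come for free: $\|\nabla G\|_{L_2}$ and $\mu\|\nabla^2\mathbb{P}u\|_{L_2}$ are $\lesssim\sqrt{\rho^*_0}\,\|\sqrt\rho\,\dot u\|_{L_2}$ by the elliptic splitting; the bound on $t\,\nu^{-1}\|\wt G\|_{L_2}^2$ follows by Poincar\'e applied to the mean-free $\wt G$ and $\nabla G$, the mean $\bar G=-\bar P$ being controlled by the pressure estimates; and $t\|\nabla\mathbb{P}u\|_{L_2}^2$ is immediate from the exponential decay \eqref{eq:decayH1}.

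Finally, for the $L_1$-in-time $L_\infty$ bounds I would use, for a fixed $r>2$, the embeddings $\|\wt G\|_{L_\infty}\lesssim\|\nabla G\|_{L_r}$ and $\|\nabla\mathbb{P}u\|_{L_\infty}\lesssim\|\nabla\mathbb{P}u\|_{L_2}^{\sigma}\|\nabla^2\mathbb{P}u\|_{L_r}^{1-\sigma}$, reduce via the elliptic splitting to $\|\sqrt\rho\,\dot u\|_{L_r}$, and interpolate $\|\sqrt\rho\,\dot u\|_{L_r}\lesssim\|\sqrt\rho\,\dot u\|_{L_2}^{2/r}\|\nabla\dot u\|_{L_2}^{1-2/r}$ (up to Poincar\'e corrections). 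Inserting the weighted bounds just obtained, writing $\|\sqrt\rho\,\dot u\|_{L_2}=(te^{\alpha_0 t})^{-1/2}(te^{\alpha_0 t}\|\sqrt\rho\,\dot u\|_{L_2}^2)^{1/2}$ and likewise for $\nabla\dot u$, and closing with a H\"older inequality in time exactly as in the treatment of \eqref{eq:qwerrewq} above, gives the claimed estimates provided the final rate is taken strictly below $\alpha_0/2$ (this is the meaning of ``possibly smaller''); the integrable factor $t^{-1/(1+2/r)}$ near $t=0$ and the convergent tail both require $r>2$, and the loss $\nu^{\theta}$ for $\wt G$ emerges from the $\nu$-dependent size of $G=\nu\div u-P$ together with the constant in $W^{1,r}\hookrightarrow L_\infty$ as $r\to2^+$. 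The main obstacle throughout is the third step: arranging that every commutator in $\mathcal{R}$ is either absorbable or multiplied by a genuinely decaying coefficient, uniformly in the large parameter $\nu$, which is precisely where the sharp rates of Theorem \ref{them:edcns} and the largeness of $\nu$ are indispensable.
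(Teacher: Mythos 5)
Your proposal follows essentially the same route as the paper: apply the material derivative to the momentum equation, test with $te^{2\beta t}\dot u$, control the Hoff commutators via two-dimensional Gagliardo--Nirenberg inequalities, the decomposition $u=\p u-\nu^{-1}\nabla(-\Delta)^{-1}(\rG+\rP)$ and the decay rates of Theorem \ref{them:edcns}, close with Gronwall, recover $\nabla G$ and $\nabla^2\p u$ from the elliptic splitting $\rho\dot u=\nabla G+\Delta\p u$, and obtain the $L_1$-in-time $L_\infty$ bounds by interpolation plus a H\"older inequality in time with a singular weight near $t=0$. The only structural difference is that the paper first proves an intermediate proposition (testing the momentum equation with $t\dot u$) to generate the weighted space-time $L_2$ and $L_4$ bounds that feed the commutator estimates, whereas you obtain the needed inputs directly from item (iii) of Theorem \ref{them:edcns} at the price of shrinking $\alpha_0$ --- which is admissible here.
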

The rest of this section will be dedicated to proving the above results.


\subsection{Proving  Theorem \ref{them:edcns}}

 Throughout the proof, we shall denote by $C_0$ (resp. $C_{0,\rho^*}$) various  `constants' that only depend on  $E_0,$ $\|\nabla u_0\|_{L_2},$  $\mu,$ $\kappa,$ $\T^d,$ $\gamma$ and  $\rho^*_0$ (resp. $\rho^*$).  
 We reserve the notation $C_{\rho^*}$ for constants depending only on $\rho^*,$ $\gamma$ and $\T^2.$ 

\smallbreak

The first step of  the proof is to establish \eqref{eq:energy0} with $\rho^*$ instead of $\rho^*_0,$
assuming that  \begin{equation}\label{eq:suprho}\rho^*:= \underset{(t,x)\in \R_+\times\T^2}{\sup}\rho(t,x)<\infty.\end{equation}
As in \cite{WYZ}, this will be achieved by introducing  a modified energy functional,  equivalent to the classical energy. 
Once \eqref{eq:energy0} (and its variation \eqref{eq:energy1}) would have been proved, we will  
plug the exponential decay information in the higher order energy functional defined in \cite{DM-ripped}, so as to 
establish Inequalities \eqref{eq:decayH1}, \eqref{eq:decayL2t} and \eqref{eq:decayP}. 
This exponential decay will also enable us to get  a precise control of the lower bound of the density
(see  \eqref{eq:rhobound}). 

In the first four steps, we shall prove Inequalities \eqref{eq:energy0}, \eqref{eq:energy1}, \eqref{eq:decayH1}, 
\eqref{eq:decayL2t}, \eqref{eq:decayP} with $C_{0,\rho^*}$ instead of $C_0,$ under Assumption \eqref{eq:suprho}.
In the fifth step, these inequalities, combined with a bootstrap argument, will be used to prove \eqref{eq:rhobound}. 
Thanks to that, we will be able to  get rid  of the dependence on  $\rho^*$ in  \eqref{eq:energy0}, \eqref{eq:energy1}, \eqref{eq:decayH1}, \eqref{eq:decayL2t} and \eqref{eq:decayP}.
\smallbreak

In order to simplify the presentation, we perform the change of unknowns
\begin{equation}\label{eq:rescaling}
\rho(t,x)= \bar\rho\wt\rho\Bigl(\frac{\bar\rho}\mu t,\frac{\bar\rho\sqrt{P'(\bar\rho)}}\mu x\Bigr)
\andf 
u(t,x)=\sqrt{\frac{P(\bar\rho)}{\bar\rho}}\wt u\Bigl(\frac{\bar\rho}\mu t,\frac{\bar\rho\sqrt{P'(\bar\rho)}}\mu x\Bigr)\cdotp
\end{equation}
Note that $(\rho,u)$ satisfies (CNS) in $\T^2$  with viscosity coefficients
$\lambda$ and $\mu$  if and only if  $(\wt\rho,\wt u)$ satisfies (CNS) 
with viscosity coefficients $\lambda/\mu$ and $1,$
pressure law $\wt P(r)=\wt r^\gamma,$  in the torus $\wt\T^2:=\frac{\bar\rho\sqrt{P'(\bar\rho)}}\mu  \T^2,$
and the mean value of the density  is $1.$
Therefore, in the rest of the proof, we will focus on the case
 $\kappa=\mu=\bar\rho=1$ and remove all the tildes for notational  simplicity.
 We shall use repeatedly that \begin{equation}\label{eq:assum}
\int_{\T^d} \rho(x)\dbar x=1 \andf \int_{\T^d} (\rho u)(x)\dbar x=0.
\end{equation}

\subsubsection*{Step 1: Exponential decay of the energy functional}

Our starting point is the energy balance \eqref{eq:energy} \emph{before time integration}, namely: 
$$\frac{d}{dt}E(t)+ \int_{\T^2} \abs{\nabla u}^2\dbar x+(\lambda+1)\int_{\T^2} \abs{\div u}^2\dbar x=0.$$
Since 
\begin{equation}\label{eq:gu}
     \|\nabla u\|^2_{L_2}+(\lambda+1)\|\div u\|^2_{L_2}=\|\nabla \p u \|^2_{L_2}+\nu \|\div u\|^2_{L_2}\end{equation}
and  $\nu=\lambda+2,$ we have 
\begin{equation}\label{eq:eqedl2}
    \frac{d}{dt}E(t)+ \int_{\T^2} \abs{\nabla \p u}^2\dbar x+\nu\int_{\T^2} \abs{\div u}^2\dbar x=0.
\end{equation}
Let $a:=\rho-1.$ As $P=(1+a)^\gamma$ with $\gamma\geq1,$ the following 
inequality (that can be proved by considering
separately the cases $a\in[-1,0]$ and $a\geq0$) is true:
$$aP\geq a+a^2.$$
As   the  mean value  of $a$ is $0,$ we  thus get: 
\begin{equation}\label{eq:reap}
\int_{\T^2} a^2\dbar x \leq \int_{\T^2} a P\dbar x.
\end{equation}
Applying the operator $(-\Delta)^{-1}\div $ to the momentum equation yields
\begin{equation}\label{eq:p}
P=(-\Delta)^{-1}\div \bigl((\rho u)_{t}+\div(\rho u\otimes u)\bigr)+\nu\div u.
\end{equation}  
Hence, thanks to the first equation of $(CNS),$ Cauchy-Schwarz inequality
and the properties of continuity of $(-\Delta)^{-1}\nabla^2$ in $L_2,$ we obtain
$$
\begin{aligned}
\int_{\T^2} aP\dbar x&=\int_{\T^2} (-\Delta)^{-1}\div \bigl((\rho u)_{t}+\div(\rho u\otimes u)\bigr)\cdot a\dbar x+\nu\int_{\T^2} a\,\div u\dbar x\\
&=\frac{d}{dt}\int_{\T^2}  (-\Delta)^{-1}\div (\rho u)\cdot a\dbar x-\int_{\T^2} \nabla(-\Delta)^{-1}\div(\rho u)\cdot \rho u\dbar x\\
&\qquad\qquad+\int_{\T^2} (-\Delta)^{-1}\div(\div(\rho u\otimes u))\cdot a\dbar x+\nu\int_{\T^2} a\,\div u\dbar x\\
\leq \frac{d}{dt}&\int_{\T^2}(-\Delta)^{-1}\div (\rho u)\cdot a\dbar x+\|{\rho}u\|^2_{L_2}+\|\rho u\otimes u\|_{L_2}\|a\|_{L_2}+\nu\|\div u\|_{L_2}\|a\|_{L_2}.
\end{aligned}$$
Along with \eqref{eq:reap} and Young inequality, the above inequality implies that
$$\begin{aligned}
-\frac{1}{\nu}\frac{d}{dt}\int_{\T^2}(-\Delta)^{-1}\div (\rho u)\cdot a&\dbar x+\frac{1}{\nu}\int_{\T^2} a^2\dbar x
\\&\leq \frac{\rho^*}{\nu}\|\sqrt{\rho}u\|^2_{L_2}+\frac{1}{\nu}\|\rho u\otimes u\|^2_{L_2}+{\nu}\|\div u\|^2_{L_2}+\frac{1}{2\nu}\|a\|^2_{L_2}.\end{aligned}$$
Now, combining  H\"older inequality
and  Sobolev inequality \eqref{eq:sobolev} with $p=6$ yields
\begin{align}
    \|\rho^{1/4} u\otimes u\|^2_{L_2}&\leq \|\sqrt{\rho}u\|_{L_2} 
    \|u\|^3_{L_6}\nonumber\\\label{eq:uL4}
    &\leq C c_{\T^2}\|\sqrt{\rho}u\|_{L_2}(\rho^*)^{3}\|\nabla u\|^3_{L_2}\cdotp
\end{align}  
Hence,  for some suitable  $C_{\rho^*}$ depending continuously on $\rho^*,$ 
$$-\frac{1}{\nu}\frac{d}{dt}\int_{\T^2}(-\Delta)^{-1}\div (\rho u)\cdot a\dbar x+
\frac{1}{2\nu}\|a\|_{L_2}^2\leq \frac{2\rho^*}\nu\|\sqrt\rho u\|_{L_2}^2
+C_{\rho^*}\frac{c_{\T^2}^{2/3}}\nu\|\nabla u\|_{L_2}^6+\nu\|\div u\|^2_{L_2}.
$$
Since  $\|\nabla u\|_{L_2}$ is bounded uniformly in time in terms of the initial data (see \cite[Proposition 2.1]{DM-ripped}), thanks to \eqref{eq:poincare} and  \eqref{eq:assum}, we conclude that
\begin{equation}\label{eq:eqaimp}
-\frac{1}{\nu}\frac{d}{dt}\int_{\T^2}(-\Delta)^{-1}\div (\rho u)\cdot a\dbar x+\frac{1}{2\nu}\|a\|_{L_2}^2\leq c_{\T^2}\frac{C_0}{\nu}\|\nabla u\|^2_{L_2}+{\nu}\|\div u\|^2_{L_2}.
\end{equation}
Consequently, if we set for some $\eta\in(0,1)$ (that will be completely 
fixed at the end of the proof): 
\begin{equation}\label{def:tildeE}
\check E:= E -\frac{\eta}\nu\int_{\T^2}(-\Delta)^{-1}\div (\rho u)\cdot a\dbar x,
\end{equation}
then we get, thanks to \eqref{eq:eqedl2}, 
\begin{equation}\label{eq:tildeE}
\frac d{dt}\check E +\|\nabla \p u\|_{L_2}^2+\nu(1-\eta)\|\div u\|_{L_2}^2
+\frac\eta{2\nu}\|a\|_{L_2}^2\leq {C_0\eta}c_{\T^2}^2\nu^{-1}\|\nabla u\|_{L_2}^2.
\end{equation}
As  the mean value of $a$ is $0,$  owing to Poincar\'e inequality and to Lemma
\ref{lem:eseq}, we have
\begin{align}\label{eq:equaaru}
    \frac\eta\nu\biggl|\int_{\T^2}(-\Delta)^{-1}\div (\rho u)\cdot a\dbar x\biggr|
    &\leq  \frac{\eta}\nu\|\Delta^{-1}\div (\rho u)\|_{\H^1}\| a\|_{\H^{-1}}\nonumber\\
    &\leq    \frac{\eta}\nu\sqrt{\rho^*}\|\sqrt{\rho}u\|_{L_2}\|a\|_{L_2}\nonumber\\
    &\leq \frac{\eta^2}{\nu^2}\frac{C_{\rho^*}^2}2\|\sqrt{\rho}u\|_{L_2}^2+\frac 12\|e\|_{L_1}.
\end{align}
Therefore, choosing  $\eta\leq\nu/(4C_{\rho^*})$
implies  $\check E\simeq E.$ Furthermore, since 
$$\|\nabla u\|^2_{L_2}= \|\nabla \p u \|^2_{L_2}+\|\div u\|^2_{L_2},$$
it is not difficult to see from \eqref{eq:tildeE} that 
if, additionally $\eta {c_{\T^2}^{2}}\leq  c_0\nu$  (with $c_0$
depending only on the data and on $\rho^*$), 
and $\nu\geq1$ then 
$$\frac d{dt}\check E +\frac14\|\nabla u\|_{L_2}^2+\frac{\eta}{2\nu}\|a\|_{L_2}^2\leq 0.
$$
Finally, by using Proposition \ref{prop1} and  the obvious fact that $\rho \abs{ u}^2\leq 
\rho^* \abs{ u}^2, $ we obtain
\begin{equation}\label{eq:poincareru}
\frac{1}{2{c_{\T^2}^{2}}(\rho^*)^3}\int_{\T^2} \frac{\rho\abs{u}^2}2\dbar x\leq \frac14\int_{\T^2} \abs{\nabla u}^2\dbar x,
\end{equation} 
while Lemma   \ref{lem:eseq} yields
$$\frac{\eta}{2\nu}\|a\|_{L_2}^2\geq \frac{\eta}{2C_{\rho^*}\nu}\|e\|_{L_1}.$$
Hence we end up with, using $\check E\simeq E,$  
 \begin{equation}\label{eq:ieecE} 
\frac{d}{dt}\check E+\frac14\min\biggl(\frac{1}{(\rho^*)^3{c_{\T^2}^{2}}},\frac{\eta}{C_{\rho^*}\nu}\biggr)\check E\leq 0.
 \end{equation}
 If $\nu$ is large enough, then one can take $\eta=1/2,$
 which allows to get \eqref{eq:energy0} in the particular
  case $\bar\rho=\kappa=\mu=1.$  The general 
  case follows after scaling back in \eqref{eq:rescaling}.

\subsubsection*{Step 2: Exponential decay of a modified energy functional}

Observe that 
\begin{equation}\label{eq:equ} \d_t e+\div(eu)+P \div u= P(1)\div u,\end{equation}            
hence, integrating  on $\T^2,$ 
\begin{equation}\label{eq:reep}\frac{d}{dt}\int_{\T^2} e\dbar x=-\int_{\T^2} P\,\div u\dbar x.\end{equation}
Testing the momentum equation by $u,$ we get
\begin{equation}\label{eq:energystar}
\frac 12\frac{d}{dt} \int_{\T^2} \rho \abs{u}^2\dbar x+ \int_{\T^2} \abs{\nabla u}^2\dbar x+(\lambda+1)\int_{\T^2} \abs{\div u}^2\dbar x=-\int_{\T^2}\nabla P\cdot u\dbar x.\end{equation}

Now, performing an integration by parts in the last term, using \eqref{eq:gu} and putting  together with \eqref{eq:reep}  implies that
$$
\frac{d}{dt}D(t)+\int_{\T^2} \abs{\nabla \p u}^2\dbar x
+\nu\int_{\T^2} \abs{\div u}^2\dbar x=(1-\nu^{-1})\int_{\T^2}(P-1)\, \div u\dbar x,$$
with $$D(t):=\frac 12\int_{\T^2} \rho(t,x) \abs{u(t,x)}^2\dbar x+\frac{1}{\nu}\int_{\T^2} e(t,x)\dbar x.$$
Combining Cauchy-Schwarz inequality and \eqref{eq:energy0}  yields
$$\biggl|\int_{\T^2}(P-1)\, \div u\dbar x\biggr|\leq C_{\rho^*}\|a\|_{L_2}\|\div u\|_{L_2}
\leq \frac\nu4\|\div u\|_{L_2}^2+ \frac{C_{\rho^*}^2}{\nu}\|a\|_{L_2}^2.$$
Further remember that \eqref{eq:eqaimp} holds true.
Hence, setting 
$$\check D:=D-\frac1{2\nu}\int_{\T^2}(-\Delta)^{-1}\div(\rho u)\cdot a\dbar x,$$
we discover that
$$
\frac d{dt}\check D+\|\nabla\p u\|_{L_2}^2+\frac \nu4\|\div u\|_{L_2}^2
+\frac1{4\nu}\|a\|_{L_2}^2\leq \frac{C_0}\nu\|\nabla u\|_{L_2}^2+ \frac{C_{\rho^*}^2}{\nu}\|a\|_{L_2}^2.
$$
Arguing as in \eqref{eq:equaaru}, we see that $\check D\simeq D$ for large enough
$\nu,$ and Lemma \ref{lem:eseq} also ensures that the term $\|a\|_{L_2}^2$
may be replaced with $\|e\|_{L_1}$ in the left-hand side, 
up to a harmless constant depending only on $\rho^*$ and $\gamma.$
This allows to conclude that
$$
\frac d{dt}\check D+\frac18\|\nabla u\|_{L_2}^2+\frac{c_{\rho^*}}\nu\|e\|_{L_1}
\leq \frac{C_{\rho^*}^2}{\nu}\|a\|_{L_2}^2.
$$
Using \eqref{eq:poincareru} and remembering the 
definition of $D$ as well as the equivalence with $\check D$ , this may be rewritten
up to a change of $C_{\rho^*}$:
\begin{equation}\label{eq:dt}
    \frac d{dt}\check D+ \alpha\check D
\leq \frac{C_{\rho^*}^2}{\nu}\|e\|_{L_1}\ \hbox{ for all }\ 
\alpha\leq\min\biggl(c_{\rho^*},\frac1{8(\rho^*)^2}\biggr)\cdotp
\end{equation}
Hence, multiplying by $e^{\alpha t}$, then integrating and bounding 
$\|e(t)\|_{L_1}$ according to Inequality \eqref{eq:energy0}, we get
$$e^{\alpha t} \check D(t)\leq \check D(0) 
+2E_0\frac{C_{\rho^*}^2}{\nu}\int_0^t e^{(\alpha-\alpha_{0})\tau}\,d\tau.
$$
Choosing $\alpha=\frac12\min\bigl(c_{\rho^*},\frac1{8(\rho^*)^2}\bigr)$ and remembering 
 $\check D\simeq D$ completes the proof of \eqref{eq:energy1}. 

\subsubsection*{Step 3: Exponential decay of higher order norms}
In this step, we will prove  \eqref{eq:decayH1} 
and, defining $F_1$ according to \eqref{eq:F1}, 
\begin{multline}\label{eq:edl2h1}
\int^{\infty}_0\!\!\!\int_{\T^2} e^{\frac{\alpha_0}4 t}\biggl(\rho \abs{u_t}^2+ (h+P+\aP)\abs{\div u}^2\\
+\frac{(\bar P-1)^2}{\nu F_1^2(\rho^*)}+\frac{1}{\rho^*} \abs{\nabla^2 \p u}^2
+ \frac{1}{\rho^*} \abs{\nabla G}^2\biggr)\dbar x\,dt\leq C_{0,\rho^*}.
\end{multline}
The first thing to do is to test the momentum equation of (CNS) by $u_t.$ We get
\begin{multline}\label{eq:uh1es}
\frac{1}{2}\frac{d}{dt}\int_{\T^2} \abs{\nabla u}^2\dbar x+\frac{\lambda\!+\!1}{2}\frac{d}{dt}\int_{\T^2} \abs{\div u}^2\dbar x+\int_{\T^2} \rho \abs{u_t}^2\dbar x+\int_{\T^2} \nabla P \cdot u_t\dbar x\\=-\int_{\T^2} (\rho u\cdot \nabla u)\cdot u_t\dbar x.   
\end{multline}
To handle the pressure term,  we note that 
\begin{equation}\label{eq:eqp}
    P_t+\div(Pu)+h\div u=0 \with h:=\rho P'-P,
\end{equation}
 which, along with the definition of $G$ in \eqref{def:G}, gives
\begin{align}\label{eq:esp}
   \int_{\T^2} P_t \div u\dbar x&=\frac{1}{2\nu}\frac{d}{dt}\int_{\T^2} P^2\dbar x+\frac{1}{\nu}\int_{\T^2} P_t G\dbar x\nonumber\\
   &=\frac{1}{2\nu}\frac{d}{dt}\int_{\T^2} P^2\dbar x+\frac{1}{\nu}\int_{\T^2} Pu\cdot \nabla G\dbar x-\frac{1}{\nu}\int_{\T^2} h\,G\,\div u \dbar x.
\end{align}
Hence
$$\begin{aligned}
   \int_{\T^2} \nabla P\cdot u_t\dbar x&=-\int_{\T^2} P\cdot (\div u)_t\dbar x\\
   &=-\frac{d}{dt}\int_{\T^2} P\div u\dbar x+\int_{\T^2} P_t \div u\dbar x\\
   =-\frac{d}{dt}&\int_{\T^2} P\div u\dbar x+\frac{1}{2\nu}\frac{d}{dt}\int_{\T^2} P^2\dbar x+\frac{1}{\nu}\int_{\T^2} Pu\cdot \nabla G\dbar x-\frac{1}{\nu}\int_{\T^2} h\,G\,\div u\dbar x,
\end{aligned}$$
Plugging this identity in \eqref{eq:uh1es}, then using  \eqref{eq:gu} yields
$$\displaylines{
\quad
\frac{1}{2}\frac{d}{dt}\int_{\T^2} \abs{\nabla \p u}^2\dbar x+\frac{\nu}{2}\frac{d}{dt}\int_{\T^2} \abs{\div u}^2\dbar x
-\frac{d}{dt}\int_{\T^2} P\div u\dbar x+\frac{1}{2\nu}\frac{d}{dt}\int_{\T^2} P^2\dbar x\hfill\cr\hfill
+\int_{\T^2} \rho \abs{u_t}^2\dbar x
=-\int_{\T^2} (\rho u\cdot \nabla u)\cdot u_t\dbar x-\frac{1}{\nu}\int_{\T^2} Pu\cdot \nabla G\dbar x+\frac{1}{\nu}\int_{\T^2} h\,G\,\div u\dbar x
.}$$
Remembering  \eqref{def:G}, we get
$$\displaylines{
\quad
\frac{1}{2}\frac{d}{dt}\int_{\T^2} \abs{\nabla \p u}^2\dbar x
+\frac{1}{2\nu}\frac{d}{dt}\int_{\T^2} G^2\dbar x +\int_{\T^2} \rho \abs{u_t}^2\dbar x
\hfill\cr\hfill
=-\int_{\T^2} (\rho u\cdot \nabla u)\cdot u_t\dbar x-\frac{1}{\nu}\int_{\T^2} Pu\cdot \nabla G\dbar x+\frac{1}{\nu}\int_{\T^2} h\,G\,\div u\dbar x.}$$
Inserting \eqref{eq:eqedl2} divided by two into this inequality yields
\begin{multline}
\label{eq:new}
\frac{d}{dt}\biggl(\frac{1}{2}\int_{\T^2} \abs{\nabla \p u}^2\dbar x+\frac{1}{4}\int_{\T^2} \rho \abs{u}^2\dbar x+\frac{1}{2\nu}\int_{\T^2} G^2\dbar x+ \frac 12\int_{\T^2} e\dbar x\biggr)
\\
+\frac{\nu}{2}\int_{\T^2} \abs{\div u}^2\dbar x+\frac{1}{2}\int_{\T^2} \abs{\nabla \p u}^2\dbar x+\int_{\T^2} \rho \abs{u_t}^2\dbar x
\\
=-\int_{\T^2}(\rho u\cdot \nabla u)\cdot u_t\dbar x-\frac{1}{\nu}\int_{\T^2} Pu\cdot \nabla G\dbar x+\frac{1}{\nu}\int_{\T^2} h\,G\,\div u\dbar x.\end{multline}
In order to exhibit some exponential decay for $\|\tilde G\|_{L_2}$ and $\|\tilde P\|_{L_2}$,  we will use
repeatedly \eqref{eq:equ} and the fact that 
\begin{equation}\label{eq:divurprg}
\nu\div u=\tilde P+\tilde G.
\end{equation}
Now, integrating \eqref{eq:equ} on $\T^2$ yields
\begin{equation}\label{eq:eqtp}
    \frac 12\frac{d}{dt}\int_{\T^2} e\dbar x+\frac{1}{2\nu}\int_{\T^2} |\tilde P|^2\dbar x+\frac{1}{2\nu}\int_{\T^2} \tilde P \tilde G\dbar x=0,
\end{equation}
and 
\begin{equation}\label{eq:eqtg}
    \frac12\frac{d}{dt}\int_{\T^2} e\dbar x+\frac\nu2\int_{\T^2} \abs{\div u}^2\dbar x=\frac{1}{2\nu}\int_{\T^2} |\tilde G|^2\dbar x+\frac{1}{2\nu}\int_{\T^2} \tilde P \tilde G\dbar x.
\end{equation}
Hence, subtracting \eqref{eq:eqtg} to \eqref{eq:new},  we obtain
\begin{multline}\label{eq:eqfim1}
    \frac{d}{dt}\biggl(\frac{1}{2}\int_{\T^2} \abs{\nabla \p u}^2\dbar x+\frac{1}{4}\int_{\T^2} \rho \abs{u}^2\dbar x+\frac{1}{2\nu}\int_{\T^2} G^2\dbar x\biggr)+\int_{\T^2} \rho \abs{u_t}^2\dbar x\\
+\frac{1}{2\nu}\int_{\T^2} |\tilde G|^2\dbar x+\frac{1}{2}\int_{\T^2} \abs{\nabla \p u}^2\dbar x=-\int_{\T^2} (\rho u\cdot \nabla u)\cdot u_t\dbar x\\
-\frac{1}{2\nu}\int_{\T^2} \tilde P \tilde G\dbar x-\frac{1}{\nu}\int_{\T^2} Pu\cdot \nabla G\dbar x+\frac{1}{\nu}\int_{\T^2} hG\div u\dbar x.
\end{multline}
Next, testing \eqref{eq:eqp} with $\div u,$ using \eqref{eq:esp} and integrating by parts yields
\begin{equation}\label{eq:eqpim}
    \frac{1}{2\nu}\frac{d}{dt}\int_{\T^2} P^2\dbar x+\int_{\T^2} h\abs{\div u}^2\dbar x=\frac{1}{\nu}\int_{\T^2} hG\div u\dbar x-\frac{1}{2\nu}\int_{\T^2} P^2\div u\dbar x.
\end{equation}
As $\bar\rho=P(\bar\rho)=1,$ Lemma \ref{lem:eseq} implies that 
\begin{equation}\label{eq:eqtdp}
    \frac{1}{2\nu}\int_{\T^2} |\rP|^2\dbar x+\frac{(\aP-1)^2}{2\nu}\leq \frac{F_1(\rho^*)^2}{2\nu}\int_{\T^2} \abs{a}^2\dbar x.
\end{equation}
Putting the above three relations  together, we get
\begin{multline}\label{eq:1}
\frac12\frac{d}{dt}\biggl(\int_{\T^2} \abs{\nabla \p u}^2\dbar x+\frac{1}{2}\int_{\T^2} \rho \abs{u}^2\dbar x+\frac{1}{\nu}\int_{\T^2} (G^2+P^2)\dbar x\biggr)+\int_{\T^2} \rho \abs{u_t}^2\dbar x\\
  +\frac{1}{2\nu}\int_{\T^2} |\tilde G|^2\dbar x+\frac{1}{2}\int_{\T^2} \abs{\nabla \p u}^2\dbar x+\int_{\T^2} h\abs{\div u}^2\dbar x+\frac{1}{2\nu F_1^2(\rho^*)}\int_{\T^2} |\rP|^2\dbar x
  +\frac{(\aP-1)^2}{2\nu  F_1^2(\rho^*)}\\
\leq -\int_{\T^2} (\rho u\cdot \nabla u)\cdot u_t\dbar x-\frac{1}{2\nu}\int_{\T^2} \tilde P \tilde G\dbar x-\frac{1}{\nu}\int_{\T^2} Pu\cdot \nabla G\dbar x\\
+\frac{2}{\nu}\int_{\T^2} hG\div u\dbar x-\frac{1}{2\nu}\int_{\T^2} P^2 \div u\dbar x+\frac{1}{2\nu}\int_{\T^2}\abs{a}^2\dbar x.
\end{multline}
To handle the right-hand side, let us rewrite the momentum equation in terms of the viscous effective flux $G=\nu \div u-P $ as follows:
\begin{equation}\label{eq:ud2}
(\Delta u-\nabla \div u)+\nabla G=\rho \t{u}.
\end{equation}
From it, we discover that
\begin{equation}\label{eq:eqg2u}
\|\Delta \p u\|^2_{L_2}+\|\nabla G\|^2_{L_2}=\|\rho \t{u}\|^2_{L_2}\leq \rho^*\|\sqrt{\rho} \t{u}\|^2_{L_2}.
\end{equation}
In addition, we deduce from \eqref{eq:eqp}, \eqref{eq:divurprg} and $\aP=-\aG$ that
\begin{equation}\label{eq:rhs1}
\begin{aligned}
\frac{2}{\nu}\int_{\T^2} hG\,\div u \dbar x
&=-\frac{2}{\nu}\bar P\int_{\T^2} h\div u \dbar x+\frac{2}{\nu}\int_{\T^2} h\div u\;\tilde{G}\dbar x\\
&=\frac{2}{\nu}\bar P\int_{\T^2} P_t\dbar x+\frac{2}{\nu}\int_{\T^2} h\div u\;\tilde{G}\dbar x\\
&=\frac{1}{2\nu}\frac{d}{dt}{\bar P}^2+\frac{1}{2\nu}\frac{d}{dt}{\bar G}^2+\frac{2}{\nu}\int_{\T^2} h\div u\;\tilde{G}\dbar x,
\end{aligned}
\end{equation}
and
$$\begin{aligned}-\frac{1}{2\nu}\int_{\T^2} \div u\; P^2\dbar x
 &=-\frac{1}{2}\int_{\T^2} P\abs{\div u}^2\dbar x-\frac{\bar P}{2\nu}\int_{\T^2} \div u \;P\dbar x+\frac{1}{2\nu}\int_{\T^2} \div u\; P\tilde G\dbar x\\
 &=-\frac{1}{2}\int_{\T^2} P\abs{\div u}^2\dbar x-\frac{\aP}{2}\int_{\T^2} \abs{\div u}^2\dbar x+\frac{\aP}{2\nu}\int_{\T^2} \div u \; \tilde G\dbar x\\ &\hspace{7cm}+\frac{1}{2\nu}\int_{\T^2} \div u \;P\tilde G\dbar x.
\end{aligned}$$
Insert \eqref{eq:eqg2u}, \eqref{eq:rhs1} and the above inequality into \eqref{eq:1} and use 
\begin{equation}\label{eq:eqpgimp}
    \int_{\T^2} |G|^2\dbar x=|\aG|^2+\int_{\T^2} |\rG|^2\dbar x \andf \int_{\T^2} |P|^2\dbar x=|\aP|^2+\int_{\T^2} |\rP|^2\dbar x.
\end{equation}
Let
 $\displaystyle\Phi:=\frac{1}{2}\int_{\T^2} \abs{\nabla \p u}^2\dbar x+\frac{1}{2\nu}\int_{\T^2} \bigl(\rG^2+ \rP^2\bigr)\dbar x.$
{}From the previous relations,  we get
\begin{multline}\label{eq:22}
\frac{d}{dt}\biggl(\Phi+\frac{1}{4}\int_{\T^2} \rho \abs{u}^2\dbar x\biggr)+\frac{1}{2\nu}\int_{\T^2} |\tilde G|^2\dbar x+\frac{1}{2\nu F_1^2(\rho^*)}\int_{\T^2} |\rP|^2\dbar x\\+\frac{1}{2}\int_{\T^2} \abs{\nabla \p u}^2\dbar x
  +\frac{(\aP-1)^2}{2\nu  F_1^2(\rho^*)}+\int_{\T^2} \rho \abs{u_t}^2\dbar x+\int_{\T^2} h\abs{\div u}^2\dbar x
  +\int_{\T^2} \frac{(P+\aP)}{2}\abs{\div u}^2\dbar x\\
   +\frac{1}{4\rho^*}\int_{\T^2} \abs{\Delta \p u}^2\dbar x+\frac{1}{4\rho^*}\int_{\T^2} \abs{\nabla G}^2\dbar x\\   
\leq \frac 14\|\sqrt{\rho} \t{u}\|^2_{L_2}-\frac{1}{2\nu}\int_{\T^2} \tilde P \tilde G\dbar x-\int_{\T^2} \rho u\cdot \nabla u\cdot u_t\dbar x-\frac{1}{\nu}\int_{\T^2} Pu\cdot \nabla G\dbar x\\ +\frac{2}{\nu}\int_{\T^2} h\div u\, \rG\dbar x+\frac{\aP}{\nu}\int_{\T^2} \div u \,\tilde G\dbar x+\frac{1}{2\nu}\int_{\T^2} \div u \,P  \tilde G\dbar x+\frac{1}{2\nu}\int_{\T^2} \abs{a}^2\dbar x=:\sum_{j=1}^8 I_j.
\end{multline}
Now, from the definition of $\t u$ and Young's inequality, we have
\begin{equation}
I_1+I_3\leq \frac 34 \int_{\T^2} \rho \abs{u_t}^2\dbar x+\frac32\int_{\T^2} \rho \abs{u\cdot\nabla u}^2\dbar x.
\end{equation}
The first term can be absorbed  by the left-hand side of \eqref{eq:22}. For the second one, we use that
\begin{equation}\label{eq:denu}
u=\p u-\frac{1}{\nu}\nabla(-\Delta)^{-1}\rG-\frac{1}{\nu}\nabla(-\Delta)^{-1}\rP.
\end{equation} 
Hence, since $\nabla^2(-\Delta)^{-1}$ maps $L_4(\T^2)$ to itself and owing to \eqref{eq:GN} with $p=4,$ we get
\begin{equation*}
\begin{aligned}
\|\sqrt{\rho}u\!\cdot\! \nabla u\|^2_{L_2}&\leq 3\|\sqrt{\rho}u\!\cdot\! \nabla \p u\|^2_{L_2}\!+\!\frac{3}{\nu^2}\|\sqrt{\rho}u\!\cdot\! \nabla^2(-\Delta)^{-1}\rG\|^2_{L_2}\!+\!\frac{3}{\nu^2}\|\sqrt{\rho}u\!\cdot\! \nabla^2(-\Delta)^{-1}\rP\|^2_{L_2}\\
&\lesssim \sqrt{\rho^*}\|\rho^{\frac 14}u\|^2_{L_4}(\|\nabla \p u\|_{L_2}\|\nabla^2 \p u\|_{L_2}\!+\!\frac{1}{\nu^2}\|\rG\|_{L_2}\|\nabla G\|_{L_2}\!+\!\frac{1}{\nu^2}\|\rP\|_{L_2}\|\rP\|_{L_\infty})\\
&\leq  C\|\rho^{\frac 14}u\|^4_{L_4}\biggl({(\rho^*)^2}\|\nabla \p u\|^2_{L_2}+\frac{(\rho^*)^2}{\nu^4}\|\rG\|^2_{L_2}+\frac{\rho^*F^2_1(\rho^*)}{\nu^3}\|\rP\|^2_{L_\infty}\biggr) \\
&\hspace{2.8cm}+\frac{1}{12\rho^*}\|\nabla^2 \p u\|^2_{L_2}+\frac{1}{24\rho^*}\|\nabla G\|^2_{L_2}+\frac{1}{12\nu F^2_1(\rho^*)} \|\rP\|^2_{L_2}.
\end{aligned}
\end{equation*}
Remembering \eqref{eq:uL4}, 
the bound $\|\nabla u\|_{L_\infty(\R_+;L_2)}\leq C_0$ and  \eqref{eq:energy1}, we get 
\begin{equation}\label{eq:esucl4b}
\|\rho^{1/4} u\|_{L_4}^4\leq C_{0,\rho^*}\Bigl(e^{-\frac{\alpha_1}2t} 
+ \nu^{-1}e^{-\frac{\alpha_0}2t}\Bigr)\cdotp
\end{equation}
Hence, 
\begin{multline}\label{eq:eqnonlinear}
  \frac32 \|\sqrt{\rho}u\cdot \nabla u\|^2_{L_2}\leq C_{0,\rho^*}\Bigl(e^{-\frac{\alpha_1}2t} 
+ \nu^{-1}e^{-\frac{\alpha_0}2t}\Bigr)\\+\frac{1}{8\rho^*}\|\nabla^2 \p u\|^2_{L_2}+\frac{1}{16\rho^*}\|\nabla G\|^2_{L_2}+\frac{1}{8\nu F^2_1(\rho^*)} \|\rP\|^2_{L_2}.
\end{multline}
For $I_4$, we just write that
\begin{equation}
I_4\leq \frac{1}{\nu} (\rho^*)^{\gamma-\frac 12}\|\sqrt{\rho}u\|_{L_2}\|\nabla G\|_{L_2}\leq \frac{8(\rho^*)^{2\gamma}}{\nu^2}\|\sqrt{\rho}u\|^2_{L_2}+\frac{1}{32 \rho^*}\|\nabla G\|^2_{L_2}.
\end{equation}
For the other terms, one takes advantage of Young's inequality and \eqref{eq:poinca} to get 
$$
\begin{aligned}
&I_2=-\frac{1}{2\nu}\int_{\T^2} \tilde P \tilde G\dbar x\leq  \frac{1}{8\nu F^2_1(\rho^*)}\int_{\T^2} |\rP|^2 \dbar x+\frac{c_{\T^2}^2F^2_1(\rho^*)}{2\nu} \int_{\T^2} \abs{\nabla G}^2 \dbar x,\\
&I_5=\frac{2}{\nu}\int_{\T^2} h\div u\;\rG\dbar x\leq \frac 12 \int_{\T^2} h\abs{\div u}^2\dbar x+\frac{2\|h\|_{L_\infty}}{\nu^2}\int_{\T^2} |\rG|^2\dbar x,\\
&I_6=\frac{\aP}{2\nu}\int_{\T^2} \div u \; \tilde G\dbar x\leq \frac{\aP}{4}\int_{\T^2} \abs{\div u}^2 \dbar x+\frac{\aP}{4\nu^2}\int_{\T^2} |\rG|^2\dbar x,\\
&I_7=\frac{1}{2\nu}\int_{\T^2} \div u \;P \tilde G\dbar x\leq \frac 14\int_{\T^2} P\abs{\div u}^2\dbar x+\frac{\|P\|_{L_\infty}}{4\nu^2}\int_{\T^2} |\rG|^2\dbar x.
\end{aligned}$$
Let us assume that
$$\nu\geq\max\bigl(16\rho^*c_{\T^2}^2 F^2_1(\rho^*), 8\|h\|_{L_\infty}+\bar P+\|P\|_{L_\infty},4(\rho^*)^{\gamma}\sqrt{\alpha_1}\bigr)$$
so that in particular  the sum of the coefficients of the last term in $I_5,I_6,I_7$ is smaller than $\frac{1}{4\nu}$ and the coefficients of the first term in $I_4$ is smaller than $\alpha_1/2$
(where $\alpha_1$ has been defined in Inequality \eqref{eq:energy1}). 
 Then, reverting to \eqref{eq:22}, we end up with
\begin{equation*}
\displaylines{
\frac{d}{dt}\biggl(\Phi+\frac{1}{4}\int_{\T^2} \rho \abs{u}^2\dbar x\biggr)+\frac{\Phi}{4  F_1^2(\rho^*)}+\frac 12\int_{\T^2} \rho \abs{u_t}^2\dbar x+\frac{(\aP-1)^2}{2\nu  F_1^2(\rho^*)}\hfill\cr\hfill
 +\frac 12\int_{\T^2} h\abs{\div u}^2\dbar x+\int_{\T^2} \frac{(P+\aP)}{4}\abs{\div u}^2\dbar x+\frac{1}{8\rho^*}\int_{\T^2} \abs{\nabla^2 \p u}^2\dbar x\hfill\cr\hfill+\frac{1}{16\rho^*}\int_{\T^2} \abs{\nabla G}^2\dbar x\leq  C_{0,\rho^*}\Bigl(e^{-\frac{\alpha_1}2t} + \nu^{-1}e^{-\frac{\alpha_0}2t}\Bigr)+\frac{1}{2\nu c_\gamma}\int_{\T^2} e\dbar x.}\end{equation*}
Inserting half of \eqref{eq:dt} (with $\alpha=\alpha_0/8)$ into the above inequality implies that
$$\displaylines{\frac{d}{dt}\Psi+\frac{\Phi}{4  F_1^2(\rho^*)}+\frac {\alpha_1}2 \check D_1+\frac 12\int_{\T^2} \rho \abs{u_t}^2\dbar x+\frac{(\aP-1)^2}{2\nu  F_1^2(\rho^*)}\hfill\cr\hfill
   +\frac 12\int_{\T^2} h\abs{\div u}^2\dbar x+\int_{\T^2} \frac{(P+\aP)}{4}\abs{\div u}^2\dbar x+\frac{1}{8\rho^*}\int_{\T^2} \abs{\nabla^2 \p u}^2\dbar x\hfill\cr\hfill
+\frac{1}{16\rho^*}\int_{\T^2} \abs{\nabla G}^2\dbar x\leq 
 C_{0,\rho^*}\Bigl(e^{-\frac{\alpha_1}2t} + \nu^{-1}e^{-\frac{\alpha_0}2t}\Bigr)
+\frac{C_{\rho^*,\gamma}}{\nu}\int_{\T^2} e\dbar x,}$$
with 
$$\check  D_1:=\frac 12\int_{\T^2} \rho \abs{u}^2\dbar x+\frac {1}{2\nu}\int_{\T^2} e\dbar x
-\frac{1}{4\nu}\int_{\T^2}(-\Delta)^{-1}\div(\rho u)\cdot a\dbar x \andf \Psi:= \Phi + \check D_1.
 $$
Then, bounding 
$\|e\|_{L_1}$ in the above inequality  according to Inequality \eqref{eq:energy1}
and taking $\wt \alpha\leq\min (\alpha_1, 1/(4F_1^2(\rho^*)))$ yields  
$$
\displaylines{
\frac{d}{dt}\Psi+\wt \alpha \Psi+\frac 12\int_{\T^2} \rho \abs{u_t}^2\dbar x
   +\frac 12\int_{\T^2} h\abs{\div u}^2\dbar x+\int_{\T^2} \frac{(P+\aP)}{4}\abs{\div u}^2\dbar x\hfill\cr\hfill
   +\frac{(\aP-1)^2}{2\nu  F_1^2(\rho^*)}+\frac{1}{8\rho^*}\int_{\T^2} \abs{\nabla^2 \p u}^2\dbar x+\frac{1}{16\rho^*}\int_{\T^2} \abs{\nabla G}^2\dbar x
\leq  C_{0,\rho^*}\Bigl(e^{-\frac{\alpha_1}2t} + \nu^{-1}e^{-\frac{\alpha_0}2t}\Bigr)\cdotp}
$$
 Taking $\wt\alpha$ such that $\alpha_0<2\wt\alpha<\alpha_1$  (which
 is always possible if $\nu$ is large enough),   multiplying by $e^{\wt\alpha t}$  then integrating yields 
 \begin{multline}\label{eq:Psi}e^{\wt\alpha t}\Psi(t) +\int_0^te^{\wt \alpha \tau}\!\!
\int_{\T^2}\biggl(\frac{\rho \abs{u_\tau}^2}2
   +\frac{h\abs{\div u}^2}2+\frac{(P+\aP)}{4}\abs{\div u}^2
   +\frac{(\aP-1)^2}{2\nu  F_1^2(\rho^*)}\hfill\cr\hfill+\frac{1}{8\rho^*} \abs{\nabla^2 \p u}^2+\frac{\abs{\nabla G}^2}{16\rho^*}\biggr)\!\dbar x\,d\tau
\leq \Psi(0) + \frac{C_{0,\rho^*}}{\alpha_1-2\wt\alpha}
+\frac{C_{0,\rho^*}}{\nu(2\wt\alpha-\alpha_0)}
e^{(\wt\alpha-\frac{\alpha_0}2)t}\end{multline}
 which, owing to 
$\check D_1\simeq D$ gives \eqref{eq:decayH1}. 
\smallbreak
In order to get \eqref{eq:edl2h1}, we observe that \eqref{eq:Psi} implies that
\begin{multline}\label{eq:ftau}\int_0^te^{\wt \alpha \tau} f(\tau)\,d\tau\leq 
C_{0,\rho^*}\bigl(1 +\nu^{-1} e^{(\wt \alpha-\frac{\alpha_0}2)t}\bigr)\quad\hbox{for all }
t\geq0,\with\\
f:=\frac{(\aP-1)^2}{2\nu  F_1^2(\rho^*)}+
\int_{\T^2}\biggl(\frac{\rho \abs{u_t}^2}2
   +\frac{h\abs{\div u}^2}2+\frac{(P+\aP)}{4}\abs{\div u}^2+\frac{1}{8\rho^*} \abs{\nabla^2 \p u}^2+\frac{\abs{\nabla G}^2}{8\rho^*}\biggr)\!\dbar x.\end{multline}
Now, integrating by parts, we see that 
$$\begin{aligned}\int_0^t e^{\frac{\alpha_0}4\tau} f(\tau)\,d\tau
&=\int_0^t e^{(\frac{\alpha_0}4-\wt\alpha)\tau} \bigl(e^{\wt \alpha \tau}f(\tau)\bigr)d\tau\\
&=e^{(\frac{\alpha_0}4-\wt\alpha)t}\int_0^te^{\wt \alpha \tau}f(\tau)\,d\tau
+\Bigl(\wt\alpha-\frac{\alpha_0}4\Bigr)\int_0^t e^{(\frac{\alpha_0}4-\wt\alpha)\tau}
\biggl(\int_0^\tau e^{\wt \alpha \tau'}f(\tau')\,d\tau'\biggr)d\tau.
\end{aligned}$$
Then, using Inequality \eqref{eq:ftau} to bound the right-hand gives 
\eqref{eq:edl2h1}.

\subsubsection*{Step 4: Exponential decay of the pressure}

Let  $\overline{h \div u}$ be  the mean value of $h\div u.$
    From   \eqref{eq:eqp} we gather that $\rP$ satisfies
$$(\rP)_t+\div(\rP u)+{(h+\bar P)\div u}=\overline{h \div u}.$$
Hence, multiplying the above equation by $e^{2\beta t}\wt P$ and integrating on $\T^2$, one gets
$$ \frac 12\frac{d}{dt}e^{2\beta t}\int_{\T^2}\rP^2 \dbar x=\beta \int_{\T^2}e^{2\beta t}\rP^2\dbar x-\frac 12\int_{\T^2}e^{2\beta t} \div u\, \rP^2\dbar x-\int_{\T^2}e^{2\beta t}(h+\aP)\div u\cdot \rP\dbar x.$$
We have then by \eqref{eq:decayH1} and H\"older inequality:
$$\begin{aligned}
    \frac 12\frac{d}{dt}e^{2\beta t}\int_{\T^2}|\rP|^2 \dbar x\leq& 
    \beta \nu e^{2\beta t} C_{0,\rho^*} \Bigl(e^{-\frac{\alpha^*}2 t}+\nu^{-1}e^{-\frac{\alpha_0}2t}\Bigr)\\
    &+\bigl(\nu^{\frac 12}\|\div u\|_{L_2}\bigr)
    \bigl(\nu^{-\frac 12}\|\rP\|_{L_2}\bigr)e^{2\beta t}\Big\|h+\frac12(\aP+P)\Big\|_{L_\infty}\\
    \leq& e^{2\beta t} C_{0,\rho^*} \Bigl(e^{-\frac{\alpha^*}2 t}+\nu^{-1}e^{-\frac{\alpha_0}2t}\Bigr)\Bigl( \beta \nu+
    \Big\|h+\frac12(\aP+P)\Big\|_{L_\infty}\Bigr)\cdotp
\end{aligned}$$
Now,   integrating with respect to $t$ 
and taking   $\beta=\alpha_0/4$ gives \eqref{eq:decayP}.

\subsubsection*{Step 5:  Upper bound of the density}

This has been proved in \cite{DM-ripped}, and this does not require exponential decay estimates. 

Reverting to the previous steps, this  completes the proof of \eqref{eq:energy0}, \eqref{eq:energy1}, 
\eqref{eq:decayH1}, \eqref{eq:decayL2t}  and \eqref{eq:decayP}.

\subsubsection*{Step 6: Lower bound of the density}  

We leave aside the case $\rho_{0,*}=0$ and assume that 
\begin{equation}\label{eq:infrho0}\rho_*:=\underset{[0,T]\times \T^2}{\sup} \geq \rho_{0,*}/2>0.\end{equation}
We claim that if  $\nu$ is large enough (independently of $T$), then  \eqref{eq:infrho0}  actually holds true \emph{with a strict inequality}.
To prove it, we observe that 
$$\d_t\log \rho+u\cdot \nabla \log \rho=-\frac{1}{\nu}(\rP+\rG),$$
and that 
applying $\div$ to the  momentum equation of (CNS) gives 
$$\Delta \rG=\d_t(\div (\rho u))+\div(\div(\rho u\otimes u)).$$
Hence, following \cite{BD1997} and introducing 
$$\varphi:=\log \rho-\nu^{-1}(-\Delta)^{-1}\div(\rho u),$$
we discover that
\begin{equation}\label{eq:vpr}
    \d_t \varphi+u\cdot \nabla \varphi+\frac{\rP}{\nu}=-\frac{1}{\nu}\sum_{i,j} [u^j,(-\Delta)^{-1}\d_i\d_j]\rho u^i.\end{equation}
Observe that the map $x\mapsto \frac{e^x-1}{x}$ is increasing on $\R.$
Hence, taking $x=\log \rho^{\gamma}$ yields
$$\begin{aligned}
    \rho^{\gamma}&\leq 1+M_-\log \rho\with M_-:=\frac{\rho_*^{\gamma}-1}{\log \rho_*}&\quad\hbox{if }\  \rho_*\leq \rho<1,\\
\rho^{\gamma}&\leq 1+ M_+\log \rho\with M_+:=\frac{(\rho^*)^{\gamma}-1}{\log \rho^*}&\quad\hbox{if }\
 1\leq \rho\leq \rho^*.   
\end{aligned}$$
Let  $M(\rho)=M_-$ if $\rho_*\leq \rho<1$ and $M(\rho)=M_+$ if $1\leq \rho\leq \rho^*.$
{}From  \eqref{eq:vpr}, we have 
\begin{equation*}
    \begin{aligned}
        \d_t (-\varphi)+u\cdot \nabla (-\varphi)-\frac{M\varphi}{\nu}
        &=\frac{P-M\varphi}{\nu}-\frac{\aP}{\nu}+\frac{1}{\nu}[u^j,(-\Delta)^{-1}\d_i\d_j]\rho u^i\\
        &\leq \frac{1-\aP}{\nu}+\frac{1}{\nu} [u^j,(-\Delta)^{-1}\d_i\d_j]\rho u^i+\frac{M}{\nu^2}(-\Delta)^{-1}\div(\rho u)\cdotp
    \end{aligned}
\end{equation*}
Hence, setting $\varphi^{-}=\max\{-\varphi,0\},$ $M_*=\min(M_-,M_+)$ and $M^*=\max(M_-,M_+)$ yields
$$\d_t \varphi^{-}+u\cdot\nabla \varphi^{-}+\frac{M_*}{\nu}\varphi^{-}+\frac{\aP-1}{\nu}\leq \frac{1}{\nu}\abs{[u^j,(-\Delta)^{-1}\d_i\d_j]\rho u^i}+\frac{M^*}{\nu^2}\abs{(-\Delta)^{-1}\div(\rho u)}\cdotp$$
As $\aP-1=(\gamma-1)\int_{\T^2} e\dbar x\geq 0,$  this gives
$$\d_t \varphi^{-}+u\cdot\nabla \varphi^{-}+\frac{M_*}{\nu}\varphi^{-}\leq \frac{1}{\nu}\abs{[u^j,(-\Delta)^{-1}\d_i\d_j]\rho u^i}+\frac{M^*}{\nu^2}\abs{(-\Delta)^{-1}\div(\rho u)},$$
whence
\begin{multline}\label{eq:f-1}
\|\varphi^{-}(t)\|_{L_\infty}\leq e^{-\frac{M_*}{\nu}t}\|\varphi^{-}(0)\|_{L_\infty}+ \frac{1}{\nu}\int_0^t e^{-\frac{M_*}{\nu}(t-\tau)}\|[u^j,(-\Delta)^{-1}\d_i\d_j]\rho u^i\|_{L_\infty}\,d\tau\\
+\frac{M^*}{\nu^2}\int_0^t e^{-\frac{M_*}{\nu}(t-\tau)}\|(-\Delta)^{-1}\div(\rho u)\|_{L_\infty}\,d\tau.
\end{multline}
To bound the integrals, we proceed as in \cite{Des-CPDE} :  on the one hand 
Sobolev embedding and the continuity of Riesz operator ensure that
\begin{equation}\label{eq:esof2rhs}
\|(-\Delta)^{-1}\div(\rho u)\|_{L_\infty}\lesssim\|\nabla (-\Delta)^{-1}\div(\rho u)\|_{L_4}\lesssim \|\rho u\|_{L_4}.
\end{equation} On the other hand,  Sobolev embedding and  Coifman, Lions, Meyer and Semmes inequality \cite{RPYS} enable us to write that
\begin{align}\|[u^j,(-\Delta)^{-1}\d_i\d_j]\rho u^i\|_{L_\infty}&\lesssim \|\nabla u\|_{L_{12}}\|\rho u\|_{L_4}\nonumber\\\label{eq:CMLS}
&\lesssim \left(\|\nabla ^2 \p u\|_{L_2}+\nu^{-1}(\|\nabla G\|_{L_2}+\|\rP\|_{L_\infty})\right)\|\rho u\|_{L_4} \cdotp
\end{align}
Plugging \eqref{eq:esof2rhs} and \eqref{eq:CMLS}  in  \eqref{eq:f-1}, then using \eqref{eq:edl2h1}, \eqref{eq:esucl4b} we end up with
$$\|\varphi^{-}(t)\|_{L_\infty}\!\leq\! \|\varphi^{-}(0)\|_{L_\infty}+ C_{0,\rho^*}\biggl(\frac1{\nu^{3/4}}
+\frac{M^*}{\nu M_*}\biggr)\cdotp$$
Hence, using the definition of $\varphi$  and, again, \eqref{eq:esucl4b}, \eqref{eq:esof2rhs}, we get
$$\log\Bigl(\frac{1}{\rho}\Bigr)\leq   \log\Bigl(\frac{1}{\rho_{0,*}}\Bigr)+  C_{0,\rho^*}\biggl(\frac1{\nu^{3/4}}
+\frac{M^*}{\nu M_*}\biggr)
   \quad\hbox{on }\ [0,T].    $$
In other words,
$$\frac{\rho(t)}{\rho_{0,*}}\geq\exp\biggl(-\frac{C_{0,\rho^*}M^*}{\nu M_*}\biggr)
\exp\biggl(-\frac{C_{0,\rho^*}}{\nu^{3/4}}\biggr)\quad\hbox{for all }\ t\in[0,T].$$
It is clear that if $\nu$ is large enough, then the right-hand side is strictly larger than $1/2,$ and combining with a bootstrap argument allows  to  complete the proof.\qed


\subsection{Time weighted estimates with exponential decay}

In this section, we prove Theorem \ref{them:edcns2}.
We need first the following intermediate result. 
\begin{proposition}\label{p:timedecay1} Under the assumptions and notations of Theorem \ref{themc1} in the particular case
$\mu=\bar\rho=P'(\bar\rho)=1,$  we have
  \begin{align}\label{eq:tweug1}
    t\|\nabla \p u(t)\|^2_{L_2}+\nu^{-1}t\|\rG(t)\|^2_{L_2}&\leq C_{0}\bigl(\nu^{-1}e^{-\frac{\alpha_0}3t}
+e^{-\alpha_2 t}\bigr),\\
         \label{eq:twel2l2}
     \int_0^\infty e^{2\delta t}\bigl(t\|\sqrt\rho\, \t u\|_{L_2}^2+\|\nabla \p u\|_{L_2}^2&+\nu\|\div u\|_{L_2}^2+t\|\nabla G\|^2_{L_2}+t\|\nabla^2\p u\|_{L_2}^2\bigr)\,dt
     \leq C_{0}
     \end{align}
with $\alpha_2:=\min(\alpha^*,c^{-2}_{\T^2}(\rho^*)^{-3})/4$ and $\delta:=\min(\alpha_2,\alpha_0)/4.$
\end{proposition}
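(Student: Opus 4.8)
The plan is to re-run the higher-order energy estimate of Step 3, now weighted by the time factor $t$, and to close by Duhamel's formula using the decay already recorded in \eqref{eq:decayH1}, \eqref{eq:decayP} and \eqref{eq:energy1}. Before its integration, the computation leading to \eqref{eq:Psi} provides (in the rescaled variables $\mu=\bar\rho=P'(\bar\rho)=1$) a differential inequality of the form
$$\frac{d}{dt}\Psi+\wt\alpha\,\Psi+\mathcal{D}\leq R(t),\qquad R(t):=C_{0,\rho^*}\bigl(e^{-\frac{\alpha_1}2t}+\nu^{-1}e^{-\frac{\alpha_0}2t}\bigr),$$
where $\Psi=\Phi+\check D_1\simeq\|\nabla\p u\|_{L_2}^2+\nu^{-1}(\|\rG\|_{L_2}^2+\|\rP\|_{L_2}^2)+D$ and $\mathcal{D}$ gathers the dissipative terms $\tfrac12\|\sqrt\rho\,u_t\|_{L_2}^2+\tfrac1{8\rho^*}\|\nabla^2\p u\|_{L_2}^2+\tfrac1{16\rho^*}\|\nabla G\|_{L_2}^2+\cdots$ standing on the left of \eqref{eq:Psi}. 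Multiplying by $t$ yields
$$\frac{d}{dt}(t\Psi)+\wt\alpha\,t\Psi+t\,\mathcal{D}\leq t\,R(t)+\Psi,$$
so the only genuinely new contribution, compared with Step 3, is the linear term $\Psi$ on the right, which is harmless since $\Psi$ already decays exponentially by \eqref{eq:decayH1}, \eqref{eq:decayP} and \eqref{eq:energy1}.

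To prove \eqref{eq:tweug1} I would set $y:=t\Psi$ and use $y(t)\leq\int_0^t e^{-\wt\alpha(t-s)}\bigl(sR(s)+\Psi(s)\bigr)\,ds$. The decay estimates give $sR(s)+\Psi(s)\leq C_0(1+s)\bigl(e^{-\frac{\alpha_1}2s}+\nu^{-1}e^{-\frac{\alpha_0}2s}\bigr)$, while Step 3 has fixed $\wt\alpha$ with $\alpha_0<2\wt\alpha<\alpha_1$, so the two frequencies decouple. The fast part ($e^{-\alpha_1 s/2}$, with $\alpha_1/2>\wt\alpha$) produces a convergent integral and decays like $e^{-\wt\alpha t}\leq e^{-\alpha_2 t}$ for the admissible choice of $\alpha_2$; the slow part ($\nu^{-1}e^{-\alpha_0 s/2}$, with $\alpha_0/2<\wt\alpha$) is dominated by $s\approx t$ and yields $C_0\nu^{-1}(1+t)e^{-\frac{\alpha_0}2t}$. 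Absorbing the polynomial factor $(1+t)$ into the exponent is exactly what degrades the slow rate to $\alpha_0/3$, so that $t\Psi(t)\leq C_0\bigl(\nu^{-1}e^{-\frac{\alpha_0}3t}+e^{-\alpha_2 t}\bigr)$; since $t\Psi\gtrsim t\|\nabla\p u\|_{L_2}^2+\nu^{-1}t\|\rG\|_{L_2}^2$, this is \eqref{eq:tweug1}.

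For \eqref{eq:twel2l2} I would instead insert the exponential weight and integrate
$$\frac{d}{dt}\bigl(te^{2\delta t}\Psi\bigr)+(\wt\alpha-2\delta)te^{2\delta t}\Psi+te^{2\delta t}\mathcal{D}\leq te^{2\delta t}R(t)+e^{2\delta t}\Psi$$
over $\R_+$. With $\delta$ taken as a small enough fraction of $\min(\alpha_0,\alpha_2)$ that $2\delta$ stays strictly below the slow rate $\alpha_0/2$ governing $R$ and $\Psi$, both $\int_0^\infty te^{2\delta t}R\,dt$ and $\int_0^\infty e^{2\delta t}\Psi\,dt$ are finite (exponential decay beats the polynomial weight), whence $\int_0^\infty te^{2\delta t}\mathcal{D}\,dt\leq C_0$. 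This controls the $t$-weighted part $\int te^{2\delta t}\bigl(\|\sqrt\rho\,u_t\|_{L_2}^2+\|\nabla^2\p u\|_{L_2}^2+\|\nabla G\|_{L_2}^2\bigr)dt$, while the unweighted terms $\|\nabla\p u\|_{L_2}^2+\nu\|\div u\|_{L_2}^2$ are recovered by integrating the pointwise bound \eqref{eq:decayH1} against $e^{2\delta t}$. Finally, $\sqrt\rho\,\t u$ is restored from $\t u=u_t+u\cdot\nabla u$ via $\|\sqrt\rho\,\t u\|_{L_2}^2\leq2\|\sqrt\rho\,u_t\|_{L_2}^2+2\|\sqrt\rho\,u\cdot\nabla u\|_{L_2}^2$, the convective part being handled by \eqref{eq:eqnonlinear} and \eqref{eq:esucl4b}, whose right-hand side is an exponentially decaying term plus a small multiple of $\|\nabla^2\p u\|_{L_2}^2+\|\nabla G\|_{L_2}^2$ already bounded above, so the estimate closes without circularity.

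The main obstacle I anticipate is the rate bookkeeping rather than any new nonlinear estimate: because the energy and the source $R$ decay only at the slow rate $\alpha_0/2$ with $O(\nu^{-1})$ amplitude, the extra factor $t$ sits essentially at the threshold of integrability, so $\alpha_2$, $\delta$ and the degraded exponent $\alpha_0/3$ must be chosen with care to keep every time integral convergent. A secondary difficulty is that $\mathcal{D}$ furnishes $u_t$ rather than the convective derivative $\t u$, so one must spend part of the already-controlled $t$-weighted $\|\nabla^2\p u\|_{L_2}^2$ and $\|\nabla G\|_{L_2}^2$ to absorb $u\cdot\nabla u$; this is precisely why it is convenient to establish the pointwise bound \eqref{eq:tweug1} first and feed it into the integrated estimate \eqref{eq:twel2l2}.
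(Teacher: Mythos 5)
Your overall strategy --- recycle the Step 3 differential inequality for $\Psi$, multiply by $t$ (resp. $te^{2\delta t}$) and close by Duhamel --- is genuinely different from the paper's, which instead derives a \emph{new} time-weighted energy identity by testing the momentum equation with $t\dot u$ (leading to \eqref{eq:poidstd}) and re-estimates every nonlinear term from scratch; in particular the paper's dissipation contains $t\|\sqrt\rho\,\dot u\|_{L_2}^2$ directly, whereas yours contains $t\|\sqrt\rho\,u_t\|_{L_2}^2$ and you must reconstruct $\dot u$ afterwards. Your skeleton does produce the correct exponential \emph{rates}, but there is a genuine gap in the $\nu$-dependence of the constants, which is not cosmetic here: by convention $C_0$ is independent of $\nu$, and the stated prefactors are consumed in the sequel (in \eqref{eq:tul4l4}--\eqref{eq:tgl4l4} and in the absorption conditions of Proposition \ref{p:timedecay2}). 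Concretely, the Step 3 source is $R(t)=C_{0,\rho^*}\bigl(e^{-\alpha_1 t/2}+\nu^{-1}e^{-\alpha_0 t/2}\bigr)$, carrying only \emph{one} power of $\nu^{-1}$ on the slow mode; since $\alpha_0\simeq\nu^{-1}$ for large $\nu$, absorbing the factor $t$ via $te^{-\alpha_0 t/6}\leq 6/(e\alpha_0)\simeq\nu$ costs exactly one power of $\nu$. Hence your Duhamel bound really gives $t\Psi(t)\lesssim e^{-\alpha_0 t/3}+e^{-\wt\alpha t}$, i.e.\ an $O(1)$ prefactor on the slow mode instead of the $O(\nu^{-1})$ claimed in \eqref{eq:tweug1}; likewise $\int_0^\infty te^{2\delta t}R\,dt\simeq\nu^{-1}(\alpha_0/2-2\delta)^{-2}\simeq\nu$, so your version of \eqref{eq:twel2l2} comes out as $C_0\nu$ rather than $C_0$. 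You correctly identify ``rate bookkeeping'' as the main risk, but the assertion that absorbing $(1+t)$ ``is exactly what degrades the slow rate to $\alpha_0/3$'' while keeping the prefactor $\nu^{-1}$ is where the bookkeeping fails: that absorption also multiplies the constant by $\alpha_0^{-1}$.

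The reason the paper does not lose this power of $\nu$ is that in its $t$-weighted inequality the slow sources are \emph{quadratic} in already-decaying quantities: each nonlinear term, e.g.\ $2\int_{\T^2}t\rho\dot u\cdot(u\cdot\nabla u)\,\dbar x$, is bounded by $t$ times a product of two factors each controlled by \eqref{eq:decayH1} or \eqref{eq:esucl4b}, so the pure slow mode enters as $\nu^{-2}te^{-\alpha_0 t/2}$ and the absorption of $t$ brings it back to $\nu^{-1}e^{-\alpha_0 t/3}$. Your argument is repairable along the same lines: when forming the $t$-weighted inequality, do not quote the Step 3 right-hand side wholesale, but re-bound the offending term of \eqref{eq:eqnonlinear} by estimating $\|\nabla\p u\|_{L_2}^2$ via \eqref{eq:decayH1} rather than by its uniform bound, so that the source becomes quadratic in the decay; the same sharpening is needed when you recover $\|\sqrt\rho\,\dot u\|_{L_2}^2$ from $\|\sqrt\rho\,u_t\|_{L_2}^2$ at the end. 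Until that is done, the proposal proves \eqref{eq:tweug1} and \eqref{eq:twel2l2} only up to a spurious factor of $\nu$. A smaller point: you should also verify that the $\wt\alpha$ admissible in Step 3 can indeed be taken $\geq\alpha_2$, so that $e^{-\wt\alpha t}\leq e^{-\alpha_2 t}$ as you assert.
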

\begin{proof}
Testing the momentum equation of  $(CNS)$ with $t\t u$ gives
\begin{multline*}
    \frac{1}{2}\frac{d}{dt}\int_{\T^2}t\abs{\nabla u}^2\dbar x+\frac{\lambda+1}{2}\frac{d}{dt}\int_{\T^2}t\abs{\div u}^2\dbar x+\int_{\T^2}t\rho \abs{\t u}^2\dbar x\\
    =-\int_{\T^2}\nabla P\cdot tu_t \dbar x+\int_{\T^2}t\rho \t u\cdot (u\cdot \nabla u)\dbar x+ \frac{1}{2}\int_{\T^2}\abs{\nabla u}^2\dbar x+\frac{\lambda+1}{2}\int_{\T^2}\abs{\div u}^2\dbar x.
\end{multline*}
To handle the pressure term, we use   \eqref{eq:eqp} and integrate by parts to get 
$$\displaylines{\quad-\int_{\T^2}\nabla P\cdot tu_t \dbar x=\frac{d}{dt}\int_{\T^2}P t\div u\dbar x+\int_{\T^2}\div(P u)\cdot t\div u\dbar x\hfill\cr\hfill
+\int_{\T^2}h\div u\cdot t\div u\dbar x-\int_{\T^2}P \div u\dbar x.\quad}$$
Putting the above two equations and \eqref{eq:energystar}, \eqref{eq:gu} together, we obtain
\begin{multline*}
    \frac{1}{2}\frac{d}{dt}\int_{\T^2}t\abs{\nabla \p u}^2\dbar x+\frac{\nu}{2}\frac{d}{dt}\int_{\T^2}t\abs{\div u}^2\dbar x+ \frac{1}{2}\frac{d}{dt}\int_{\T^2}\rho \abs{u}^2\dbar x-\frac{d}{dt}\int_{\T^2}P t\div u\dbar x\\
    +\int_{\T^2}t\rho \abs{\t u}^2\dbar x+ \frac{1}{2}\int_{\T^2}\abs{\nabla \p u}^2\dbar x+\frac{\nu}{2}\int_{\T^2}\abs{\div u}^2\dbar x\\
   = \int_{\T^2}t\rho \t u\cdot (u\cdot \nabla u)\dbar x+\int_{\T^2}\div(P u)\cdot t\div u\dbar x+\int_{\T^2}th(\div u)^2\,\dbar x,
\end{multline*}
which along with the fact that 
$$\frac\nu2(\div u)^2=\frac1{2\nu}(G^2-P^2)+P\div u$$
and the following consequence of  \eqref{eq:eqpim} and \eqref{eq:rhs1},
$$\begin{aligned}
    \frac{1}{2\nu}\frac{d}{dt}\int_{\T^2}  t P^2\dbar x&+\int_{\T^2} t h\abs{\div u}^2\dbar x\\&=\frac{1}{\nu}\int_{\T^2}  t G\cdot h \div u\dbar x-\frac{1}{2\nu}\int_{\T^2} t P^2\div u\dbar x+\frac{1}{2\nu}\int_{\T^2}P^2\dbar x
    \\
&= \frac{1}{2\nu}\frac{d}{dt}(t\abs{\aG}^2)-\frac{1}{2\nu}\abs{\aG}^2+\frac{t}{\nu} \int_{\T^2}h\div u \cdot \rG\dbar x\\
& \hspace{3cm}-\frac{1}{2\nu}\int_{\T^2} t P^2\div u\dbar x+\frac{1}{2\nu}\int_{\T^2}P^2\dbar x. 
       \end{aligned} $$
Remembering $\aP=-\aG$ and using \eqref{eq:eqpgimp}, the above three equalities gives
\begin{multline}\label{eq:poidstd}
    \frac{1}{2}\frac{d}{dt}\int_{\T^2}t\abs{\nabla \p u}^2\dbar x+\frac{1}{2\nu}\frac{d}{dt}\int_{\T^2}t\abs{\rG}^2\dbar x+ \frac{1}{2}\frac{d}{dt}\int_{\T^2}\rho \abs{u}^2\dbar x\\
    +\int_{\T^2}t\rho \abs{\t u}^2\dbar x+ \frac{1}{2}\int_{\T^2}\abs{\nabla \p u}^2\dbar x+\frac{\nu}{2}\int_{\T^2}\abs{\div u}^2\dbar x
   = \int_{\T^2}t\rho \t u\cdot (u\cdot\nabla u)\dbar x \\+\int_{\T^2}\div(P u)\cdot t\div u\dbar x
   +\frac{1}{2\nu}\int_{\T^2}\rP^2\,dx-\frac{t}{2\nu}\int_{\T^2}P^2\div u\,dx+\frac{t}{\nu} \int_{\T^2}h\div u \cdot \rG\dbar x.
\end{multline}
Finally, taking advantage of \eqref{eq:eqg2u}
to incorporate $t\|\nabla^2\p u\|_{L_2}^2$ and
$t\|\nabla G\|_{L_2}^2$ in the left-hand side of \eqref{eq:poidstd}
and denoting 
$$\Pi(t):=\int_{\T^2}\bigl(t(\abs{\nabla \p u(t)}^2+
\nu^{-1}|\rG(t)|^2)+\rho(t)|u(t)|^2\bigr)\dbar x, $$
we end up with 
\begin{multline}\label{eq:Pi}
\frac d{dt}\Pi + 
\int_{\T^2}t\bigl(\rho|\t u|^2+\frac1{\rho^{*}}(|\nabla^2\p u|^2+|\nabla G|^2)\bigr)\dbar x
+\int_{\T^2}\bigl(\abs{\nabla \p u}^2+\nu\abs{\div u}^2\bigr)
\dbar x\\\leq 
\frac{1}{\nu}\int_{\T^2}\rP^2\,dx
+ 2\int_{\T^2}t\rho \t u\cdot (u\cdot\nabla u)\dbar x +2\int_{\T^2}\div(P u)\cdot t\div u\dbar x\\
   -\frac{1}{\nu}\int_{\T^2}tP^2\div u\,dx+\frac{2}{\nu} \int_{\T^2}th\div u \cdot \rG\dbar x.
\end{multline}
The first term in the right-hand side can 
be bounded according to \eqref{eq:decayP}. 
For the second one,
 we argue as for proving \eqref{eq:eqnonlinear}
except that we use
$$
C\frac1{\nu^2}\|\wt P\|_{L_2}\|\wt P\|_{L_\infty}\leq \frac1{4\nu^2}\|\wt P\|_{L_2}^2+\frac{C^2}{\nu^2}
\|\wt P\|_{L_\infty}.$$
so that we get $$\displaylines{
 2\int_{\T^2}t\rho \t u\cdot (u\cdot \nabla u)\dbar x\leq \frac{t}{2}\|\sqrt{\rho} \t u\|^2_{L_2}
+\frac{t}{4\rho^*}\|\nabla^2 \p u\|^2_{L_2}
+\frac{t}{8\rho^*}\|\nabla G\|^2_{L_2}+\frac{t}{2\nu^2}\|\rP\|^2_{L_2}
  \hfill\cr\hfill
 +C_{0}t\Bigl(e^{-\frac{\alpha^*}2t} 
+ \nu^{-1}e^{-\frac{\alpha_0}2t}\Bigr)\biggl({(\rho^*)^2}\|\nabla \p u\|^2_{L_2}+\frac{(\rho^*)^2}{\nu^4}\|\rG\|^2_{L_2}+\frac{\rho^*}{\nu^2}\|\rP\|^2_{L_\infty}\biggr)\cdotp}$$
Hence, bounding the terms with $\nabla\p u$ and $\rG$ according to \eqref{eq:decayH1},  we conclude that
\begin{multline*}
2\int_{\T^2}t\rho \t u\cdot (u\cdot \nabla u)\dbar x\leq \frac{t}{2}
\|\sqrt{\rho} \t u\|^2_{L_2}
+\frac{t}{4\rho^*}\|\nabla^2 \p u\|^2_{L_2}
+\frac{t}{8\rho^*}\|\nabla G\|^2_{L_2}+\frac{t}{2\nu^2}\|\rP\|^2_{L_2}\\
+C_{0}t\Bigl(\Bigl(e^{-\frac{\alpha^*}2t} 
+ \nu^{-1}e^{-\frac{\alpha_0}2t}\Bigr)^2 +
\nu^{-2}\Bigl(e^{-\frac{\alpha^*}2t} 
+ \nu^{-1}e^{-\frac{\alpha_0}2t}\Bigr)\Bigr)\cdotp
\end{multline*}
Next, using \eqref{eq:decayH1},  we have 
 $$\frac{t}{\nu} \int_{\T^2}\rP^2 \div u\dbar x\leq \frac{t}{\nu} \|\div u\|_{L_2}\|\rP\|_{L_\infty}\|\rP\|_{L_2}
 \leq \frac{C_{0}}{\nu}  t\Bigl(e^{-\frac{\alpha^*}2t} 
+ \nu^{-1}e^{-\frac{\alpha_0}2t}\Bigr)\cdotp $$
Hence,  observing that
 $$ \frac{t}{\nu} \int_{\T^2} P^2 \div u\dbar x
\leq\frac{t}{\nu} \int_{\T^2}\rP^2 \div u\dbar x+\frac {2t}\nu
|\aP| \|\rP\|_{L_2}\|\div u\|_{L_2}$$
and arguing as above for bounding the last term,  we discover that
\begin{equation*}
\frac{t}{\nu} \int_{\T^2}P^2 \div u\dbar x\leq  
  \frac{C_{0}}{\nu}  t\Bigl(e^{-\frac{\alpha^*}2t} 
+ \nu^{-1}e^{-\frac{\alpha_0}2t}\Bigr)\cdotp   
\end{equation*}
To handle the next term, we use \eqref{eq:divurprg} then integrate by parts to get
$$    t\int_{\T^2}\div(Pu)\cdot \div u\dbar x
=\frac{t}{2\nu}  \int_{\T^2}\div u\cdot P^2\dbar x-\frac{t}{\nu}\int_{\T^2} P\cdot u\cdot \nabla G\dbar x.$$
The first term may be treated as above and for the second one, 
we use that  thanks to H\"older and Young inequalities,  \eqref{eq:pti}, \eqref{eq:decayH1}, then  \eqref{eq:eqg2u},
$$\begin{aligned}
-\frac{t}{\nu}\int_{\T^2} P\cdot u\cdot \nabla G\dbar x
&\leq \frac{4t\rho^*}{\nu^2} \|P\|^2_{L_\infty}\| u\|^2_{L_2}+\frac{t}{16\rho^*}\|\nabla G\|^2_{L_2}\\
    &\leq \frac{4c^2_{\T^2}t(\rho^*)^3}{\nu^2} \|P\|^2_{L_\infty}\|\nabla u\|^2_{L_2} +\frac{t}{16\rho^*}\|\nabla G\|^2_{L_2}\\
 &\leq  C_{0}\nu^{-2}t\Bigl(e^{-\frac{\alpha^*}2t} 
+ \nu^{-1}e^{-\frac{\alpha_0}2t}\Bigr) +\frac{t}{16\rho^*}\|\nabla G\|^2_{L_2}.\end{aligned}$$
To handle the last term of \eqref{eq:Pi}, using  again  Young inequality and \eqref{eq:decayH1} gives
$$\begin{aligned}
    \frac{t}{\nu} \int_{\T^2}h\div u \cdot \rG\leq& \frac{t}{16\rho^*}\|\nabla G\|^2_{L_2}+ c_{\T^2}^4\frac{t}{\nu^2} \|\div u\|^2_{L_2}\\
    \leq & \frac{t}{16\rho^*}\|\nabla G\|^2_{L_2}
    +C_{0}\nu^{-3}t\Bigl(e^{-\frac{\alpha^*}2t} 
+ \nu^{-1}e^{-\frac{\alpha_0}2t}\Bigr)\cdotp
\end{aligned}$$
Plugging all the above estimates in \eqref{eq:Pi}, we get 
$$\displaylines{
\frac d{dt}\Pi + \frac12\int_{\T^2}t\bigl(\rho|\t u|^2+\frac1{\rho^{*}}(|\nabla^2\p u|^2+|\nabla G|^2)\bigr)\dbar x
+\int_{\T^2}\bigl(\abs{\nabla \p u}^2+\nu\abs{\div u}^2\bigr)
\dbar x\hfill\cr\hfill\leq 
C_{0}\Bigl(\nu^{-1}e^{-\frac{\alpha_0}2t}
+ \nu^{-1}t\Bigl(e^{-\frac{\alpha^*}2t} 
+ \nu^{-1}e^{-\frac{\alpha_0}2t}\Bigr)
+ t\Bigl(e^{-\frac{\alpha^*}2t} 
+ \nu^{-1}e^{-\frac{\alpha_0}2t}\Bigr)^2
\Bigr)\cdotp}$$
At this stage, the fundamental observation is that
the basic Poincar\'e inequality \eqref{eq:poinca} and \eqref{eq:poincareru} imply that 
$$\frac14\int_{\T^2}t\bigl(\rho|\t u|^2+\frac1{\rho^{*}}(|\nabla^2\p u|^2+|\nabla G|^2)
+\abs{\nabla u}^2\bigr)\dbar x\geq \alpha_2 \Pi
\with \alpha_2 := \frac1{4c_{\T^2}^2(\rho^*)^3}\cdotp$$
Hence after keeping only 
the main order terms (for large $\nu$) 
in the right-hand side, the above inequality implies that
$$\displaylines{\frac d{dt}\Pi+\alpha_2\Pi + \Xi
\leq C_{0}\Bigl(\nu^{-1}e^{-\frac{\alpha_0}2t}
+te^{-\frac{\alpha^*}2t} 
+ \nu^{-2}te^{-\frac{\alpha_0}2t}\Bigr)\hfill\cr\hfill\with 
\Xi(t):=  \frac14\int_{\T^2}t\bigl(\rho|\t u|^2+\frac1{\rho^{*}}(|\nabla^2\p u|^2+|\nabla G|^2)\bigr)\dbar x
+\frac12\int_{\T^2}\bigl(\abs{\nabla \p u}^2+\nu\abs{\div u}^2\bigr)\dbar x.}$$
Then, integrating yields assuming that $\nu$ is large 
enough and that $2\alpha_2>\alpha_0$
$$\displaylines{ e^{\alpha_2 t}\Pi(t)+\int_0^te^{\alpha_2 \tau}\,\Xi(\tau)\,d\tau
\leq \Pi(0)+\frac{C_{0}}{(2\alpha_2-\alpha^*)^2}+\frac{C_{0}}{(2\alpha_2-\alpha_0)^2}\hfill\cr\hfill
+C_{0}
e^{\alpha_2 t}\Bigl(\frac{\nu^{-1} e^{-\frac{\alpha_0}2t}}{2\alpha_2-\alpha_0}
+\frac{te^{-\frac{\alpha^*}2t}}{\abs{2\alpha_2-\alpha^*}} + \frac{\nu^{-2}te^{-\frac{\alpha_0}2t}}{2\alpha_2-\alpha_0}\Bigr)\cdotp}
$$
At this stage, it suffices to use the fact  that for all $k>0,$ we have
\begin{equation}\label{eq:boundk}
\sup_{t\in\R_+} te^{-kt}\leq (ek)^{-1}\andf
\int_0^\infty te^{-kt}\,dt=k^{-2},\end{equation}
and to argue as  for proving \eqref{eq:decayH1}  to  get the desired inequalities.  
\end{proof}

Granted with Proposition \ref{p:timedecay1}, one can obtain the following time decay estimates for the convective derivative: 
\begin{proposition}\label{p:timedecay2} 
Let $(\rho,u)$ be a solution to (CNS) given  by Theorem \ref{themc1}. Then, 
$$
\underset{t\in \R_+}{\sup}\int_{\T^2}te^{\alpha_3 t}\rho \abs{\t u}^2\dbar x+\int_0^\infty\!\!\int_{\T^2}te^{\alpha_3 t}\abs{\nabla \t u}^2\dbar x\,dt+\nu\int_0^\infty\!\!\int_{\T^2}te^{\alpha_3 t}\abs{\div\t u}^2\dbar x\,dt\leq C_{0}$$
with  $\alpha_3=\delta/4+\min(\alpha_2, \alpha_0/4)/8.$
\end{proposition}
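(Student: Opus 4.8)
The strategy is to differentiate the momentum equation of $(CNS)$ in time and test against $te^{\alpha_3 t}\t u$, producing an energy identity for the time-weighted quantity $\int_{\T^2}te^{\alpha_3 t}\rho|\t u|^2\dbar x$, and then to close the estimate by means of a Gronwall argument, absorbing the worst viscous terms into the dissipation $\int te^{\alpha_3 t}|\nabla\t u|^2\dbar x$ and controlling all remaining source terms by the decaying quantities furnished by Theorem \ref{them:edcns} and Proposition \ref{p:timedecay1}. Concretely, the plan is as follows.

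\textbf{Step 1: the equation for $\t u$.} First I would apply $\d_t$ to the momentum equation $\rho\t u-\mu\Delta u-(\lambda\!+\!\mu)\nabla\div u+\nabla P=0$. Using the mass equation $\rho_t=-\div(\rho u)$ to rewrite $(\rho\t u)_t$, one obtains a transport-type equation of the schematic form $\rho\,\d_t\t u+\rho u\cdot\nabla\t u-\Delta\t u-(\lambda\!+\!1)\nabla\div\t u=\mathcal{R}$, where the commutator remainder $\mathcal{R}$ collects terms of the type $\rho\,\d_j u^k\d_k\t u^j$, $\rho\,\t u\cdot\nabla u$, and the time derivative of the pressure gradient $\nabla P_t$. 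This is the standard Hoff-type computation; the term $\nabla P_t$ is handled by \eqref{eq:eqp}, which expresses $P_t$ in terms of $\div(Pu)$ and $h\div u$, so that it ultimately contributes terms controlled by $\|\div u\|_{L_2}$ and the already-decaying $\tilde P$.

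\textbf{Step 2: the time-weighted energy identity.} Testing the equation from Step 1 with $te^{\alpha_3 t}\t u$ and integrating by parts gives
$$\frac12\frac{d}{dt}\int_{\T^2}te^{\alpha_3 t}\rho|\t u|^2\dbar x+\int_{\T^2}te^{\alpha_3 t}\bigl(|\nabla\t u|^2+(\lambda\!+\!1)|\div\t u|^2\bigr)\dbar x\leq\frac{\alpha_3}2\int_{\T^2}te^{\alpha_3 t}\rho|\t u|^2\dbar x+\frac12\int_{\T^2}e^{\alpha_3 t}\rho|\t u|^2\dbar x+\text{(source terms)}.$$
The $\frac{\alpha_3}2$-term is reabsorbed using the Poincaré-type inequality \eqref{eq:pti} together with the smallness of $\alpha_3$ relative to the spectral gap, exactly in the spirit of Step 1 of the proof of Theorem \ref{them:edcns}; the half-power weight $\frac12\int e^{\alpha_3 t}\rho|\t u|^2$ is the term that will be controlled by the time-integrated bound $\int_0^\infty e^{2\delta t}t\|\sqrt\rho\,\t u\|_{L_2}^2\,dt\leq C_0$ from \eqref{eq:twel2l2}, since $\alpha_3=\delta/4+\min(\alpha_2,\alpha_0/4)/8<2\delta$.

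\textbf{Step 3: bounding the source terms.} The nonlinear source terms are estimated exactly as in the proof of \eqref{eq:star0} in Theorem \ref{lemed1} and in the handling of $I_1,\dots,I_4$ there: by Hölder, the Sobolev embedding $H^1\hookrightarrow L_6(\T^2)$, the Gagliardo–Nirenberg inequalities, and Young's inequality, one isolates a fraction $\eps\int te^{\alpha_3 t}|\nabla\t u|^2$ of the dissipation to be absorbed on the left, while the remaining factors are bounded by $\|u\|_{L_\infty}$, $\|\nabla u\|_{L_3}$, $\|\nabla^2\p u\|_{L_2}$, and $\|\nabla G\|_{L_2}$. Each of these carries exponential decay from \eqref{eq:decayH1}, \eqref{eq:esother} (or its torus analogue), and the pointwise and integrated bounds of Proposition \ref{p:timedecay1}, so that all source contributions take the form $C_0\,(\text{integrable, exponentially decaying in }t)$ possibly multiplied by the coefficient $\int e^{\alpha_3 t}\rho|\t u|^2$ appearing in a Gronwall factor.

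\textbf{Step 4: Gronwall and conclusion.} Writing the outcome of Steps 2–3 as $\frac{d}{dt}\mathcal E+\Xi\leq g(t)+\phi(t)\mathcal E$ with $\mathcal E:=\int te^{\alpha_3 t}\rho|\t u|^2\dbar x$, $\Xi$ the remaining half of the dissipation, $g\in L_1(\R_+)$ by the decay estimates, and $\phi(t):=C(\|u\|_{L_\infty}^2+\|\nabla^2\p u\|_{L_2}^2+\|\nabla u\|_{L_3}^2)\in L_1(\R_+)$ again by \eqref{eq:decayH1} and \eqref{eq:esother}, Gronwall's inequality yields the stated supremum and integral bounds at once, since $\exp\bigl(\int_0^\infty\phi\bigr)\leq C_0$. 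I expect the main obstacle to be the bookkeeping in Step 3: one must verify that each decaying factor decays strictly faster than $e^{-\alpha_3 t}$ so that, after pairing with the weight $e^{\alpha_3 t}$, every source term is genuinely integrable on $\R_+$; this is exactly why $\alpha_3$ is chosen as the rather specific convex-combination $\delta/4+\min(\alpha_2,\alpha_0/4)/8$, strictly below both $2\delta$ and the other relevant rates, and checking these numerical inequalities is the delicate part of the argument rather than any single estimate.
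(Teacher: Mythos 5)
Your overall skeleton --- apply the material derivative to the momentum equation, test against $te^{\alpha_3 t}\t u$, absorb part of the dissipation, and close with Gronwall using the decay from Theorem \ref{them:edcns} and Proposition \ref{p:timedecay1} --- is exactly the paper's strategy. But two points in your plan do not go through as written. First, in Step 2 you propose to control the unweighted term $\tfrac12\int e^{\alpha_3 t}\rho|\t u|^2\dbar x$ by the bound $\int_0^\infty e^{2\delta t}\,t\,\|\sqrt\rho\,\t u\|_{L_2}^2\,dt\leq C_0$ from \eqref{eq:twel2l2}. The weight $t$ degenerates at $t=0$, so this bound says nothing about $\int_0^1 e^{\alpha_3 t}\|\sqrt\rho\,\t u\|_{L_2}^2\,dt$; you must instead split $\t u=u_t+u\cdot\nabla u$, use the unweighted integrated bound \eqref{eq:decayL2t}/\eqref{eq:edl2h1} for the $u_t$ part, and handle the convective part via the $L_4$ space--time estimates (this is what the paper does for its term $I_2$).

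Second, and more seriously, Step 3 is where the actual work lies and your plan defers it to the estimates of the incompressible proof, which do not cover the compressible source terms. Applying $D/Dt$ to the momentum equation produces, besides the terms you list, the commutators ${\rm Tr}(\nabla u\cdot\nabla\q u)\,\div\t u$ and $\nabla u\cdot\nabla\div u\cdot\t u$ carrying a prefactor $\lambda+1\sim\nu$, as well as $(h+P)\div u\,\div\t u$ and $\nabla u\cdot\nabla\wt P\cdot\t u$. These cannot be bounded by $\|u\|_{L_\infty},\|\nabla u\|_{L_3},\|\nabla^2\p u\|_{L_2},\|\nabla G\|_{L_2}$ alone with $\nu$-uniform constants: one needs the decomposition $u=\p u-\nu^{-1}\nabla(-\Delta)^{-1}(\rG+\rP)$, the relation $\nu\div u=\rP+\rG$, the elliptic bound $\|\nabla G\|_{L_p}+\|\nabla^2\p u\|_{L_p}\lesssim\|\rho\dot u\|_{L_p}$, and dedicated $L_4(\R_+\times\T^2)$ space--time estimates for $t^{1/2}e^{\eta t}\nabla\p u$, $t^{1/4}e^{\eta t}\div u$, $t^{1/2}e^{\eta t}\rG$ and $t^{1/4}e^{\eta t}\rP$ with explicit powers of $\nu$ (the paper's \eqref{eq:tul4l4}--\eqref{eq:ul4l4}), together with several integrations by parts in the pressure terms to trade $\nabla\wt P$ for $\div\t u$ and $\nabla G$. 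Also note that \eqref{eq:esother} is proved only for (INS); there is no ready-made compressible analogue to cite. Without these ingredients the source terms cannot be shown integrable with $\nu$-independent constants, so the proposal as it stands is a correct outline rather than a proof.
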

\begin{proof} In what follows, we denote $A:B=\sum_{i,j}A_{ij}B_{i,j}$ if $A$ and $B$ are two $d\times d$ matrices, $(Du)_{i,j}:=\d_j u^i$ and $(\nabla u)_{ij}:=\d_i u^j$ for $1\leq i,j\leq d.$
Finally, both $\t f$ and $\frac{D}{Dt} f$  designate the convective derivative of $f.$
\medbreak
Now, applying $\frac{D}{Dt}$ to the momentum equation and testing by $te^{2\beta t}\t u,$ we obtain
\begin{equation}
 \label{eq:Ddotu}   
\int_{\T^2}\biggl(\frac{D}{Dt}(\rho\t u)-\frac{D}{Dt}\Delta u-(\lambda+1)\frac{D}{Dt}\nabla \div u+\frac{D}{Dt}\nabla P\biggr)\cdot e^{2\beta t}t \t u\dbar x=0.\end{equation}
First, taking  advantage of the mass conservation equation, we get
\begin{equation*}
\int_{\T^2} \frac{D}{Dt}(\rho \t u) e^{2\beta t}t \t u\dbar x=\frac 12 \int_{\T^2}\biggl(\frac{D}{Dt}(te^{2\beta t}\rho \abs{\t u}^2)-e^{2\beta t}(\rho \abs{\t u}^2+2\beta t\rho \abs{\t u}^2+t \div u \,\rho\abs{\t u}^2) \biggr)\dbar x.
\end{equation*}
Integrating by parts in the first term gives
\begin{multline}\label{eq:Ddotu0}
\int_{\T^2} \frac{D}{Dt}(\rho \t u) e^{2\beta t}t \t u\dbar x=\frac 12\frac{d}{dt}\int_{\T^2}te^{2\beta t}\rho \abs{\t u}^2\dbar x-\int_{\T^2}\ te^{2\beta t} \div u \, \rho\abs{\t u}^2\dbar x\\
\qquad-\frac 12\int_{\T^2} e^{2\beta t}\rho \abs{\t u}^2\dbar x-\int_{\T^2}\beta t e^{2\beta t}\rho \abs{\t u}^2\dbar x.
\end{multline}
To handle the other terms of \eqref{eq:Ddotu},  we need  the following relations:
\begin{equation*}
\begin{aligned}
&\frac{D}{Dt}\Delta u=\div(\nabla \t u-\nabla u\cdot \nabla u) -\nabla u\cdot \nabla^2 u\with  (\nabla u\cdot \nabla^2 u)^i:=\sum_{1\leq j,k\leq d} \d_k u^j \d_j\d_ku^i,\\
&\frac{D}{Dt}\nabla \div u=\nabla \div(\t u)-\nabla u\cdot \nabla \div u-\nabla(\rm{Tr}(\nabla u\cdot \nabla \q u)),\\
&\frac{D}{Dt}\nabla P=\nabla \dot P-\nabla u\cdot \nabla P \andf \t P=-(h+P)\div u.
\end{aligned}
\end{equation*}
Thanks to them and to  \eqref{eq:Ddotu0}, we get 
\begin{multline}\label{eq:esttu}
\frac 12\frac{d}{dt}\int_{\T^2}te^{2\beta t}\rho \abs{\t u}^2\dbar x+\int_{\T^2}te^{2\beta t}\abs{\nabla \t u}^2\dbar x+(\lambda+1)\int_{\T^2}te^{2\beta t}\abs{\div\t u}^2\dbar x\\
=\int_{\T^2}\ te^{2\beta t} \div u \, \rho\abs{\t u}^2\dbar x+\frac 12\int_{\T^2} e^{2\beta t}\rho \abs{\t u}^2\dbar x+\int_{\T^2}\beta t e^{2\beta t}\rho \abs{\t u}^2\dbar x\\
 + \int_{\T^2} te^{2\beta t}(\nabla u\cdot\nabla u)\cdot \nabla \t u\dbar x+ \int_{\T^2} te^{2\beta t}(\nabla u\cdot\nabla^2 u)\cdot \t u\dbar x\\
 +(\lambda+1) \int_{\T^2} te^{2\beta t}{\rm Tr}(\nabla u\cdot \nabla \q u) \, \div \t u\dbar x+(\lambda+1) \int_{\T^2} te^{2\beta t}(\nabla u\cdot\nabla\div u)\, \t u\dbar x\\
 -\int_{\T^2} te^{2\beta t}(h+P)\div u\cdot \div \t u\dbar x+\int_{\T^2} te^{2\beta t}(\nabla u\cdot \nabla \wt P) \cdot \t u\dbar x=:\sum^{9}_{k=1}I_k.
\end{multline}
Before estimating the terms $I_j$, we present some results that play an important role in the proof. At first, we 
claim that for, say, $\eta=\delta/2+\min(\alpha_2/4,\alpha_0/16),$
we have 
\begin{equation}\label{eq:tul4l4}
    \|t^{1/2}e^{\eta t}\nabla \p u\|_{L_4(\R_+\times\T^2)}\leq  C_{0}\andf \|t^{1/4}e^{\eta t}\div u\|_{L_4(\R_+\times\T^2)}\leq C_{0}\nu^{-\frac{1}{2}}.
\end{equation}
The first inequality stems from  \eqref{eq:tweug1} and \eqref{eq:twel2l2}
and the fact that \begin{equation*}
\|t^{1/2}e^{\eta t}\nabla \p u\|^4_{L_4(\R_+\times\T^2)}\lesssim \int_0^{\infty} e^{4\eta t} \|t^{1/2}\nabla \p u\|^2_{L_2}\|t^{1/2}\nabla^2 \p u\|^2_{L_2}\,dt.
\end{equation*}
Similarly, we can get (using \eqref{eq:edl2h1} and \eqref{eq:decayH1}to obtain the second inequality)
\begin{equation}\label{eq:tgl4l4}
    \|t^{1/2}e^{\eta t}\rG\|_{L_4(\R_+\times\T^2)}\leq C_{0} \nu^{{1}/{4}}\andf \|t^{1/4}e^{\eta t}\rG\|_{L_4(\R_+\times\T^2)}\leq C_{0} \nu^{{1}/{4}}
\end{equation}
and using \eqref{eq:decayP}, \eqref{eq:boundk}  gives
\begin{equation}\label{eq:tpl4l4}
\|t^{1/4}e^{\eta t}\rP\|^4_{L_4(\R_+\times\T^2)}\lesssim \int_0^{\infty} te^{4\eta t} \|\rP\|^2_{L_2}\|\rP\|^2_{L_\infty}\,dt\leq C_{0} \nu^2.
\end{equation}
Putting \eqref{eq:tgl4l4} and \eqref{eq:tpl4l4} together, and remembering \eqref{eq:divurprg} gives the second part
of \eqref{eq:tul4l4}. 
Note that arguing similarly but using only  \eqref{eq:decayH1} and \eqref{eq:divurprg} , 
one  can easily get  
\begin{equation}\label{eq:ul4l4}
    \|e^{\eta t}\nabla u\|_{L_4(\R_+\times\T^2)}\leq C_0\andf \|e^{\eta t}\div u\|_{L_4(\R_+\times\T^2)}\leq C_0\nu^{-\frac{3}{4}}.
\end{equation}
Finally, we shall also use the fact that, 
owing to Relation \eqref{eq:ud2}, we have for all $p\in(1,\infty),$
\begin{equation}\label{eq:Lp}\|\nabla G\|_{L_p}+\|\nabla^2 \p u\|_{L_p}\leq C\|\rho \dot u\|_{L_p}.\end{equation}
We are ready to bound the terms $I_j$. 
Regarding  $I_1,$ we observe  that since $\rho\t u$ has mean value zero, 
Inequality \eqref{eq:sobolev} and Theorem \ref{themc1}   allow to write that
\begin{align}
I_1= \int_{\T^2}\ te^{2\beta t} \rho\,\div u  \,\abs{\t u}^2\dbar x
&\leq te^{2\beta t}\rho^*\|\div u\|_{L_2}\|\t u\|^2_{L_4}\nonumber\\
&\leq C_{0}\nu^{-\frac{1}{2}}
\|\sqrt{t}e^{\beta t}\nabla \t u\|^2_{L_2}.
\end{align}
So, if we assume that $\nu\geq (\mu^{-1} 4 C_{0})^2,$ then one gets
\begin{equation}\label{eq:I1}
I_1\leq \frac{\mu}{16}\int_{\T^2}\|\sqrt{t}e^{\beta t}\nabla \t u\|^2_{L_2}\,\dbar x.\end{equation}
Next, combining \eqref{eq:ul4l4} with \eqref{eq:sobolev} and  \eqref{eq:edl2h1}  yields for all $T>0$ (if $\beta\leq2\eta$):
\begin{align}\label{eq:I2}
    \int_0^T I_2\,dt&\leq \int_0^T
    e^{2(\beta-\alpha_2)t}\|e^{\alpha_2 t}\sqrt{\rho}u_t\|^2_{L_2}\,dt+\int_0^T \|e^{\beta t}\sqrt{\rho}u \cdot \nabla u\|^2_{L_2}\,dt\nonumber\\
    &\leq C_{0}+\int_0^T e^{2\beta t}\|\sqrt{\rho}u \|^2_{L_4}\|\nabla u \|^2_{L_4}\,dt\nonumber\\
    &\leq C_{0}+\int_0^T e^{(2\beta-4\eta)t}
    \|e^{\eta t}\nabla u \|^4_{L_4}\,dt\nonumber\\
    &\leq C_{0}.
\end{align}
Next, in light of \eqref{eq:twel2l2}, we have 
\begin{equation}\label{eq:I3}
\int_0^T I_3\,dt\leq C_{0}\quad\hbox{for all }\ T>0.\end{equation}
For $I_4,$ using H\"older inequality, \eqref{eq:interl4d}, \eqref{eq:eqg2u} and 
\eqref{eq:denu} we have
\begin{equation*}
\begin{aligned}
    I_4&\lesssim \|\sqrt{t}e^{\beta t}\nabla \t u\|_{L_2}\sqrt{t}e^{\beta t}
    \Bigl(\|\nabla \p u\|_{L_{4}}+\frac{1}{\nu}\|\rP\|_{L_{4}}+\frac{1}{\nu}\|\rG\|_{L_{4}}\Bigr)^2\\
    &\lesssim \|\sqrt{t}e^{\beta t}\nabla \t u\|_{L_2}\sqrt{t}e^{\beta t}\Bigl(\|\nabla \p u\|_{L_2}\|\nabla^2 \p u\|_{L_2}+\frac{1}{\nu^2}\|\rP\|_{L_2}\|\rP\|_{L_\infty}+\frac{1}{\nu^2}\|\rG\|_{L_2}\|\nabla G\|_{L_2}\Bigr)\\
    &\lesssim  \|\sqrt{t}e^{\beta t}\nabla \t u\|_{L_2}\biggl(\Bigl(\|\nabla \p u\|_{L_2}+\frac{1}{\nu^2}\|\rG\|_{L_2}\Bigr)\|\sqrt{t}e^{\beta t}\rho \t u\|_{L_2}+\frac{1}{\nu^2}\sqrt{t}e^{\beta t}\|\rP\|_{L_2}\|\rP\|_{L_\infty}\biggr)\\
    &\leq  \frac{1}{16} \|\sqrt{t}e^{\beta t}\nabla \t u\|^2_{L_2}+C 
    \biggl(\Bigl(\|\nabla \p u\|_{L_2}\!+\!\frac{1}{\nu^2}\|\rG\|_{L_2}\Bigr)\|\sqrt{t}e^{\beta t}\rho \t u\|_{L_2}\!+\!\frac{\sqrt{t}e^{\beta t}}{\nu^2}\|\rP\|_{L_2}\|\rP\|_{L_\infty}\biggr)^2
    \cdotp
\end{aligned}
\end{equation*}
Hence, by using \eqref{eq:decayH1} and \eqref{eq:boundk}, and  choosing $\beta <\alpha_0/5,$ we obtain for all $T>0$
\begin{align}\label{eq:I4}
    \int_0^T I_4\,dt
    &\leq  \frac{1}{16} \int_0^T\|\sqrt{t}e^{\beta t}\nabla \t u\|^2_{L_2}\,dt\nonumber\\
    +C \int_0^T &\biggl((\|\nabla \p u\|_{L_2}+\frac{1}{\nu^2}\|\rG\|_{L_2})^2\|\sqrt{t}e^{\beta t}\rho \t u\|^2_{L_2}+\frac{C_{0}}{\nu^3}te^{2\beta t}\Bigl(e^{-\frac{\alpha^*}2 t}+\nu^{-1}e^{-\frac{\alpha_0}2t}\Bigr)\biggr)dt\nonumber\\
     &\leq  \frac{1}{16} \int_0^T\!\!\|\sqrt{t}e^{\beta t}\nabla \t u\|^2_{L_2}\,dt
    +C \int_0^T\!\! \biggl((\|\nabla \p u\|_{L_2}\!+\!\frac{1}{\nu^2}\|\rG\|_{L_2})^2\|\sqrt{t}e^{\beta t}\rho \t u\|^2_{L_2}\biggr)dt\!+\! \frac{C_{0}}{\nu^2}\nonumber\\&\leq  \frac{1}{16} \int_0^T\!\!\|\sqrt{t}e^{\beta t}\nabla \t u\|^2_{L_2}\,dt
    +C_{0}\int_0^T
\bigl(e^{-\frac{\alpha^*}2t}+\nu^{-1}e^{-\frac{\alpha_0}2t}\bigr)\|\sqrt{t}e^{\beta t}\rho \t u\|^2_{L_2}\,dt +\nu^{-2}C_{0}.
\end{align}
For $I_6,$ since $\nu \q u=-\nabla (-\Delta)^{-1}(\rG+\rP),$ and we have \eqref{eq:interl4d},  \eqref{eq:eqg2u}, we find that
$$\begin{aligned}
I_6
&\lesssim \frac{(\lambda\!+\!1)}{\nu}\|\sqrt{t}e^{\beta t}\div \t u\|_{L_2}\sqrt{t}e^{\beta t}\Bigl(\|\nabla \p u\|_{L_{4}}\|\rG+\rP\|_{L_4}+\frac{1}{\nu}(\|\wt G\|^2_{L_4}+\|\wt P\|^2_{L_4})\Bigr)\\
&\lesssim\frac{(\lambda\!+\!1)}{\nu}\|\sqrt{t}e^{\beta t}\div \t u\|_{L_2}\Bigl( \|\nabla \p u\|^{\frac 12}_{L_2}\|\rG\|^{\frac 12}_{L_2}\|\sqrt{t}e^{\beta t}\rho \t u\|_{L_2}\!+\!\sqrt{t}e^{\beta t}\frac{1}{\nu}\|(\rG,\rP)\|^2_{L_4}\Bigr)\\
&\hspace{2cm}+\frac{(\lambda\!+\!1)}{\nu}\|\sqrt{t}e^{\beta t}\div \t u\|_{L_2}\sqrt{t}e^{\beta t}\|\nabla \p u\|^{\frac 12}_{L_2}\|\rho \t u\|^{\frac 12}_{L_2}\|\rP\|^{\frac 12}_{L_2}\|\rP\|^{\frac 12}_{L_\infty}\\
&\leq  C\frac{(\lambda\!+\!1)}{\nu^2}
\Bigl( \|\nabla \p u\|_{L_2}\|\rG\|_{L_2}\|\sqrt{t}e^{\beta t}\rho \t u\|^2_{L_2} +\frac{1}{\nu^2}\|t^{1/4}e^{\frac{\beta}2t}(\rG,\rP)\|^4_{L_4}\\
&\hspace{2cm}+te^{2\beta t}\|\nabla \p u\|_{L_2}\|\rho \t u\|_{L_2}\|\rP\|_{L_2}\|\rP\|_{L_\infty}\Bigr)
+\frac{(\lambda\!+\!1)}{4}\|\sqrt{t}e^{\beta t}\div \t u\|^2_{L_2}.
\end{aligned}$$
Hence, according to  \eqref{eq:decayH1}, \eqref{eq:tgl4l4},  \eqref{eq:tpl4l4},
\eqref{eq:decayP} and \eqref{eq:tweug1}, choosing $\beta <2\eta$ we have 
\begin{align}\label{eq:I6}
     \int_0^T \!I_6\,dt&\leq   C\nu^{-1}\!\int_0^T\!
\Bigl(( \|\nabla \p u\|_{L_2}\|\rG\|_{L_2}\!+\!\|\rP\|_{L_2}^2)\|\sqrt{t}e^{\beta t}\rho \t u\|^2_{L_2}\!+\!\|\sqrt te^{\beta t}\nabla \p u\|^2_{L_2}\|\rP\|_{L_\infty}^2\Bigr)dt\nonumber\\\nonumber
&\hspace{2.3cm}+C\nu^{-3}\int_0^T \|t^{1/4}e^{\frac{\beta}2t}(\rG,\rP)\|^4_{L_4}\,dt
+\frac{(\lambda\!+\!1)}{4}\int_0^T\|\sqrt{t}e^{\beta t}\div \t u\|^2_{L_2}\,dt\\
&\leq  C_{0}\nu^{-1}\int_0^T \Bigl(\sqrt\nu e^{-\frac{\alpha^*}2t}+
e^{-\frac{\alpha_0}2t}\Bigr)\|\sqrt{t}e^{\beta t}\rho \t u\|^2_{L_2}\,dt\nonumber
\\&\hspace{2.3cm}+\nu^{-1}C_{0}
+\frac{(\lambda\!+\!1)}{4}\int_0^\infty\|\sqrt{t}e^{\beta t}\div \t u\|^2_{L_2}\,dt.
\end{align}
Thanks to \eqref{eq:decayH1}, we obtain
$$\begin{aligned}
I_8&\leq \|\sqrt{t}e^{\beta t}\div \t u\|_{L_2}\sqrt{t}e^{\beta t}\|\div u\|_{L_2}\|h+P\|_{L_\infty}\\
&\leq \frac{(\lambda+1)}{16}\|\sqrt{t}e^{\beta t}\div \t u\|^2_{L_2}+\frac{C}{(\lambda+1)}te^{2\beta t}\|\div u\|^2_{L_2}\|h+P\|^2_{L_\infty}\\
&\leq \frac{(\lambda+1)}{16}\|\sqrt{t}e^{\beta t}\div \t u\|^2_{L_2}+\frac{C_{0}}{\nu^2}te^{2\beta t}\Bigl(e^{-\frac{\alpha^*}2 t}+\nu^{-1}e^{-\frac{\alpha_0}2t}\Bigr)\cdotp
\end{aligned}$$
Hence for all $T>0$,
\begin{equation}\label{eq:I8}
\int_0^T I_8\,dt \leq  \frac{(\lambda+1)}{16}\int_0^T \|\sqrt{t}e^{\beta t}\div \t u\|^2_{L_2}\,dt+\frac{C_{0}}{\nu}\cdotp\end{equation}
Owing to \eqref{eq:divurprg} and to the decomposition \eqref{eq:denu}, we find that  
    \begin{multline}\label{eq:I70}
        I_7=\frac{(\lambda\!+\!1) }{\nu} I_9+I_{71}\\
                \with I_{71}:=\frac{(\lambda\!+\!1) }{\nu}
        \int_{\T^2} te^{2\beta t}\Bigl(\Bigl(\nabla \p u-\frac{1}{\nu}\nabla^2(-\Delta)^{-1}(\rG+\rP)\Bigr)\cdot\nabla G\Bigr)\cdot \t u\dbar x\biggr)\cdotp
    \end{multline}
Remembering that $\lambda+1\leq\nu$ and using \eqref{eq:Lp}, \eqref{eq:sobolev}, \eqref{eq:pti} and \eqref{eq:decayH1}, we get 
$$\begin{aligned}
  I_{71}&\leq t e^{2\beta t}\|\nabla \p u\|_{L_2}\|\nabla G\|_{L_4}\|\t u\|_{L_4}+\frac{1}{\nu} t e^{2\beta t}(\|\rG\|_{L_2}+\|\rP\|_{L_2})\|\nabla G\|_{L_4}\|\t u\|_{L_4}\\
   &\leq  t e^{2\beta t}\|\nabla \p u\|_{L_2}\|\rho\t u\|^{\frac 14}_{L_2}\|\rho\t u\|^{\frac 34}_{L_6}\|\t u\|_{L_4}+\frac{C_{0}}{\nu^{1/2}} t e^{2\beta t}\|\nabla \t u\|^2_{L_2}\\
   &\lesssim C_{0}\|\nabla \p u\|_{L_2}\|\sqrt{t}e^{\beta t}\rho \t u\|^{\frac 14}_{L_2}\|\sqrt{t}e^{\beta t}\nabla \t u\|^{\frac 74}_{L_2}+\frac{C_{0}}{\nu^{1/2}} t e^{2\beta t}\|\nabla \t u\|^2_{L_2}\\
    &\leq\frac{1}{32}\|\sqrt{t}e^{\beta t}\nabla \t u\|^2_{L_2}+
    C_{0}\|\nabla \p u\|^8_{L_2}\|\sqrt{t}e^{\beta t}\rho \t u\|^2_{L_2}+
    \frac{C_{0}}{\nu^{1/2}} t e^{2\beta t}\|\nabla \t u\|^2_{L_2}.
\end{aligned}$$
Hence, assuming that  $\nu^{1/2}>32 C_{0},$ one gets 
\begin{equation}\label{eq:I71}
\int_0^T I_{71} \,dt\leq  \frac{1}{16}\int_0^T \|\sqrt{t}e^{\beta t}\nabla \t u\|^2_{L_2}\,dt+C_{0}\int_0^T \|\nabla \p u\|^8_{L_2}\|\sqrt{t}e^{\beta t}\rho \t u\|^2_{L_2}\,dt.\end{equation}
To handle $I_9,$ we use again  \eqref{eq:divurprg}, integrate by parts 
where needed and use \eqref{eq:eqg2u} and \eqref{eq:decayH1} to get: 
$$\begin{aligned}
    I_9&=-\int_{\T^2} te^{2\beta t}\tilde P \dot u\cdot\nabla\div  u\dbar x
    -\int_{\T^2} te^{2\beta t}\tilde P \nabla u:D\t u \dbar x   \\
     &=\frac{1}{2\nu} \int_{\T^2} te^{2\beta t} \rP^2 \div \t u \dbar x-\frac{1}{\nu} \int_{\T^2} te^{2\beta t}\rP \nabla G\cdot \t u \dbar x-\int_{\T^2} te^{2\beta t}\rP \nabla u:D\t u \dbar x\\
    & =:I_{91}+I_{92}+I_{93}.
     \end{aligned}$$
     From \eqref{eq:pti}, H\"older and  Young  inequality, we infer that
     $$\begin{aligned}
    I_{91}+I_{92} 
     \leq& \frac{\sqrt{t}}{2\nu}e^{\beta t}\|\rP\|_{L_2}\|\rP\|_{L_\infty}\|\sqrt{t}e^{\beta t}\div \t u\|_{L_2}\\
     &\hspace{2cm}+\nu^{-1}\|\sqrt{t}e^{\beta t}\nabla \t u\|_{L_2}\sqrt{t}e^{\beta t}\|\rP\|_{L_\infty}\|\nabla G\|_{L_2}\\
    \leq  & \frac{(\lambda+1)}{16}\|\sqrt{t}e^{\beta t}\div \t u\|^2_{L_2}+\frac{1}{32} \|\sqrt{t}e^{\beta t}\nabla \t u\|^2_{L_2}\\
      &\hspace{2cm}+C\nu^{-2}te^{2\beta t}\Bigl(\nu^{-1}\|\rP\|^2_{L_2}\|\rP\|^2_{L_\infty}+\|\rP\|^2_{L_\infty}\|\nabla G\|^2_{L_2} \Bigr)\cdotp
\end{aligned}$$
     Hence, taking advantage of \eqref{eq:edl2h1},  \eqref{eq:boundk} and  \eqref{eq:decayP}, we get if  $\beta<\alpha_2,$
     $$ \int_0^T  (I_{91}+I_{92}) \,dt\leq\frac{(\lambda+1)}{16}\int_0^T \|\sqrt{t}e^{\beta t}\div \t u\|^2_{L_2}\,dt+\frac{1}{32}\int_0^T  \|\sqrt{t}e^{\beta t}\nabla \t u\|^2_{L_2}\,dt
      +C_{0}\nu^{-1}.$$
       Since $\alpha_2\simeq\nu^{-1},$ 
using \eqref{eq:denu}, \eqref{eq:edl2h1} and \eqref{eq:boundk} gives
$$\begin{aligned}
\int_0^T I_{93}\,dt&\leq  \int_0^T \|\sqrt{t}e^{\beta t}\nabla \t u\|_{L_2}\sqrt{t}e^{\beta t} \|\rP \cdot \nabla u\|_{L_2} \,dt\\
&\leq \frac{1}{32} \int_0^T \|\sqrt{t}e^{\beta t}\nabla \t u\|^2_{L_2} \,dt+C\int_0^T \|\rP\|_{L_\infty}^2 \|\sqrt te^{\beta t}\nabla \p u\|^2_{L_2} \,dt\\
&\hspace{2cm}+\frac{C}{\nu^2}\int_0^T te^{2\beta t}\|\rP\|^4_{L_4} \,dt+ \frac{C}{\nu^2}\int_0^T te^{2\beta t}\|\rP\cdot \rG\|^2_{L_2} \,dt.\end{aligned}$$
      The last two terms may be bounded independently of $\nu$ 
(provided $\beta\leq2\eta$) by means of \eqref{eq:tul4l4} and \eqref{eq:tgl4l4}. Next,
thanks to \eqref{eq:twel2l2} and to \eqref{eq:poinca}, we have
$$\int_0^T t e^{2\beta t} \|\rP\|^2_{L_\infty}\|\nabla \p u\|^2_{L_2} \,dt
    \leq  C_{\rho^*}\int_0^T t e^{2\beta t}\|\nabla^2 \p u\|^2_{L_2} \,dt \leq  C_{0}.$$
Hence, remembering \eqref{eq:I70} and \eqref{eq:I71},
\begin{equation}\label{eq:I9}
\int_0^T (I_7+I_{9})\,dt \leq \frac{(\lambda+1)}{8}\int_0^T \|\sqrt{t}e^{\beta t}\div \t u\|^2_{L_2}\,dt+\frac{3}{16}\int_0^T \|\sqrt{t}e^{\beta t}\nabla \t u\|^2_{L_2}\,dt+C_{0}.\end{equation}
      Finally, using again \eqref{eq:denu}, we obtain
$$
\begin{aligned}
I_5&= \int_{\T^2} te^{2\beta t}\biggl(\nabla u\cdot\nabla^2 \Bigl(\p u-\frac{1}{\nu}\nabla(-\Delta)^{-1}\rG-\frac{1}{\nu}\nabla(-\Delta)^{-1}\rP\Bigr)\biggr)\cdot \t u\dbar x\\
&=:I_{51}+I_{52}+I_{53}.
\end{aligned}$$ 
By  continuity of Riesz operators on $L_4$ and H\"older inequality, one gets
$$I_{51}+I_{52}\leq C te^{2\beta t}\|\nabla u\|_{L_2}(\|\nabla^2 \p u\|_{L_4}+\|\nabla G\|_{L_4})\|\t u\|_{L_4}.$$
Hence, remembering \eqref{eq:sobolev} and arguing as for proving \eqref{eq:esucl4b}, we easily get
$$\begin{aligned}
I_{51}+I_{52}&\leq  C_{\rho^*} 
\sqrt{t}e^{\beta t}\|\nabla  u\|_{L_2}\|\rho^{1/4}\t u\|_{L_4}\|\sqrt{t}e^{\beta t}\nabla \t u\|_{L_2},\\
&\leq  C_{\rho^*} \|\sqrt{t}e^{\beta t}\nabla \dot u\|_{L_2}^{7/4}
\|\sqrt{\rho t} e^{\beta t}\t u\|_{L_2}^{1/4}\|\nabla u\|_{L_2}\\
&\leq \frac{1}{16}\|\sqrt{t}e^{\beta t}\nabla \t u\|^{2}_{L_2} +  C_{\rho^*} \|\nabla u\|_{L_2}^8\|\sqrt{\rho t} e^{\beta t}\t u\|_{L_2}^{2}\\
&\leq \frac{1}{16}\|\sqrt{t}e^{\beta t}\nabla \t u\|^{2}_{L_2} +  C_{0} \Bigl(e^{-\frac{\alpha^*}2 t}+\nu^{-1}e^{-\frac{\alpha_0}2t}\Bigr)^4\|\sqrt{\rho t} e^{\beta t}\t u\|_{L_2}^{2}
\end{aligned}$$
      which entails
\begin{equation}\label{eq:I51}\int_0^T (I_{51}+I_{52})\,dt \leq \frac{1}{16}\|\sqrt{t}e^{\beta t}\nabla \t u\|^{2}_{L_2} + C_{0}\!\int_0^T\! \Bigl(e^{-2\alpha^* t}+\frac{e^{-2\alpha_0t}}{\nu^4}\Bigr)\|\sqrt{\rho t} e^{\beta t}\t u\|_{L_2}^{2}\,dt.\end{equation}
To bound $I_{53}$, we set $\phi:=(-\Delta)^{-1}\rP$ and use the following identity (see \cite[(B.6)]{DM-ripped}):
$$\begin{aligned}
    I_{53}
&=\frac{1}{2\nu^2}\int_{\T^2}t e^{2\beta t}\rP^2\cdot \div \t u\dbar x-\frac{1}{\nu^2}\int_{\T^2}t e^{2\beta t}\rP \cdot \nabla^2\phi\cdot \nabla \t u\dbar x\\
&\hspace{2cm}+\frac{1}{\nu^2}\int_{\T^2}t e^{2\beta t} \nabla G\cdot \nabla^2\phi\cdot \t u\dbar x+\frac{1}{\nu}\int_{\T^2}t e^{2\beta t}\nabla u\cdot \nabla^2\phi\cdot \nabla \t u\dbar x.
\end{aligned}$$
Therefore, thanks to H\"older inequality, \eqref{eq:pti} and to the continuity of $\nabla^2(-\Delta)^{-1}$ on $L_4(\T^2),$ we get 
$$\begin{aligned}
I_{53} 
&\leq \frac{1}{\nu^2}\sqrt{t}e^{\beta t}\|\rP\|^2_{L_4}(\frac 12\|\sqrt{t}e^{\beta t}\div \t u\|_{L_2}+\|\sqrt{t}e^{\beta t}\nabla \t u\|_{L_2})\\
&+\frac{1}{\nu^2}\sqrt{t}e^{\beta t}(\|\nabla G\|_{L_4}\|\rP\|_{L_4}+\nu\|\nabla u\|_{L_4}\|\rP\|_{L_4})\|\sqrt{t}e^{\beta t}\nabla \t u\|_{L_2}\\
\leq &\frac{(\lambda+1)}{16}\|\sqrt{t}e^{\beta t}\div \t u\|^2_{L_2}+\frac{1}{16} \|\sqrt{t}e^{\beta t}\nabla \t u\|^2_{L_2}\\
&+\frac{C}{\nu^4}\biggl(te^{2\beta t}\|\rP\|^4_{L_4}+te^{2\beta t}(\|\nabla G\|^2_{L_4}\|\rP\|_{L_2}
\|\rP\|_{L_\infty}+\nu^2\|\nabla u\|^2_{L_4}\|\rP\|^2_{L_4})\biggr)\cdotp
\end{aligned}$$
Hence, using  \eqref{eq:edl2h1}, \eqref{eq:tul4l4}, \eqref{eq:tpl4l4},
\eqref{eq:boundk},  and choosing $\beta<\eta,$ we conclude that
\begin{align}\label{eq:I53}
    \int_0^T I_{53}\,dt&\leq \frac{(\lambda+1)}{16}\int_0^T\|\sqrt{t}e^{\beta t}\div \t u\|^2_{L_2}\,dt+\frac{1}{16} \int_0^T\|\sqrt{t}e^{\beta t}\nabla \t u\|^2_{L_2}\,dt\nonumber\\
&\qquad\qquad+\frac{C}{\nu^4}\int_0^T te^{2(\beta-\eta) t}\|e^{\eta t}\rP\|^4_{L_4}\,dt\nonumber\\
&\qquad\qquad+\frac{C}{\nu^4}\int_0^T te^{2(\beta-\alpha_2) t}\|e^{\alpha_2t}\nabla G\|^2_{L_2}\|\rP\|_{L_2}\|\rP\|_{L_\infty}\,dt\nonumber\\
&\qquad\qquad+ \frac{C}{\nu^2}\int_0^T e^{2(\beta-2\eta) t}\|t^{1/4}e^{\eta t}\nabla u\|^2_{L_4}\|t^{1/4}e^{\eta t}\rP\|^2_{L_4}\,dt\nonumber\\
&\leq \frac{(\lambda+1)}{16}\int_0^T\|\sqrt{t}e^{\beta t}\div \t u\|^2_{L_2}\,dt+\frac{1}{16} \int_0^T\|\sqrt{t}e^{\beta t}\nabla \t u\|^2_{L_2}\,dt
+\frac{C_{0}}\nu\cdotp
\end{align}
Finally, plugging Inequalities \eqref{eq:I1}, \eqref{eq:I2}, 
\eqref{eq:I3}, \eqref{eq:I4}, \eqref{eq:I6}, \eqref{eq:I8}, \eqref{eq:I9}, 
 \eqref{eq:I51} and \eqref{eq:I53} in \eqref{eq:esttu}
(after integrating on $[0,t]$), 
we end up with 
\begin{multline*}
 \|e^{\beta t}\sqrt{\rho t}\,\t u(t)\|^2_{L_2}
 +\frac{1}{2}\int_0^t\|\sqrt \tau\,e^{\beta \tau}\nabla \t u\|_{L_2}^2\,d\tau+\frac{\nu}{2}\int_0^t \|\sqrt\tau\,e^{\beta \tau}\divv \t u\|_{L_2}^2\,d\tau\\
    \leq C_{0}\biggl(1 +\int_0^t\bigl(\nu^{-1/2}e^{-\frac{\alpha^*}2 \tau}+\nu^{-1}e^{-\frac{\alpha_0}2\tau}\bigr)
    \|e^{\beta \tau}\sqrt{\rho \tau}\,\t u\|^2_{L_2}\,d\tau\biggr)\cdotp
\end{multline*}
Then, using Gronwall and taking advantage of \eqref{eq:boundk}
completes the proof. 
\end{proof}

\begin{corollary}\label{c:decayG}
Let $(\rho,u)$ be a solution to (CNS) given by Theorem \ref{themc1}.  
There exists two positive constants $C_{0}$ and $\nu_0$ depending only on the initial data, $\T^2,$ $\mu$, $\kappa$ and $\gamma,$ positive constant $c,$ such that if $\nu\geq\nu_0$  then  
\begin{equation*}
\int_0^\infty e^{\frac{\alpha_3}4t}\|\rG\|_{L_\infty}\,dt\leq C_{0}\,\nu^{c\theta}\andf\int_0^\infty e^{\frac{\alpha_3}4t}\|\nabla \p u\|_{L_\infty}\,dt\leq C_{0}.
\end{equation*}
\end{corollary}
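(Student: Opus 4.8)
The plan is to reduce both $L_\infty$ estimates to an $L_1$-in-time control of $\|\nabla^2\p u\|_{L_r}$ and $\|\nabla G\|_{L_r}$ for some $r$ slightly above $2$, and then to interpolate back up to $L_\infty$ by Gagliardo--Nirenberg. The two bounds will differ only through the weight carried by $\|\rG\|_{L_2}$, which is the source of the factor $\nu^{c\theta}$.

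\emph{Step 1: an $L_1(L_r)$ bound.} First I would fix $r\in(2,\infty)$ and show that
$$\int_0^\infty e^{\frac{\alpha_3}3 t}\bigl(\|\nabla^2\p u\|_{L_r}+\|\nabla G\|_{L_r}\bigr)\,dt\leq C_0$$
uniformly in $\nu$. By \eqref{eq:Lp} this reduces to bounding $\int_0^\infty e^{\frac{\alpha_3}3 t}\|\rho\t u\|_{L_r}\,dt$. Since $\rho\t u=\Delta\p u+\nabla G$ has zero mean, I would use $\|\rho\t u\|_{L_r}\leq\sqrt{\rho^*}\|\t u\|_{L_r}$ together with the torus inequality \eqref{eq:gnlr}, controlling the mean of $\t u$ through $\int_{\T^2}\rho\t u\dbar x=0$ and \eqref{eq:pti}, to obtain, with $\gamma=2/r$,
$$\|\rho\t u\|_{L_r}\lesssim C_{\rho^*}\|\sqrt\rho\,\t u\|_{L_2}^{\gamma}\|\nabla\t u\|_{L_2}^{1-\gamma}.$$
Inserting the weighted bounds of Proposition \ref{p:timedecay2}, namely $\|\sqrt t\,e^{\frac{\alpha_3}2 t}\sqrt\rho\,\t u\|_{L_\infty(L_2)}\leq C_0$ and $\|\sqrt t\,e^{\frac{\alpha_3}2 t}\nabla\t u\|_{L_2(\R_+\times\T^2)}\leq C_0$, the two half-powers of $t^{-1/2}$ combine so that
$$e^{\frac{\alpha_3}3 t}\|\rho\t u\|_{L_r}\lesssim C_0\,t^{-1/2}e^{-\frac{\alpha_3}6 t}\,V(t)^{1-\gamma}\with V(t):=e^{\frac{\alpha_3}2 t}\|\sqrt t\,\nabla\t u\|_{L_2}.$$
A H\"older inequality in time with exponents $\frac{2}{1-\gamma}$ and $\frac{2}{1+\gamma}$, using $V\in L_2(\R_+)$ and the integrability of $t^{-1/(1+\gamma)}$ near the origin (valid since $\gamma>0$), then closes Step 1.

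\emph{Step 2: interpolation up to $L_\infty$.} For $\nabla\p u$ I would invoke the Gagliardo--Nirenberg inequality $\|\nabla\p u\|_{L_\infty}\lesssim\|\nabla\p u\|_{L_2}^{\theta}\|\nabla^2\p u\|_{L_r}^{1-\theta}$ with $\theta=\frac{r-2}{2(r-1)}$, bound $\|\nabla\p u\|_{L_2}\leq C_0$ by \eqref{eq:decayH1}, and split in time as $\|\nabla^2\p u\|_{L_r}^{1-\theta}=(e^{\frac{\alpha_3}3 t}\|\nabla^2\p u\|_{L_r})^{1-\theta}e^{-\frac{\alpha_3}3(1-\theta)t}$; since $\frac{\alpha_3}4<\frac{\alpha_3}3(1-\theta)$ for small $\theta$, a H\"older inequality in time together with Step 1 yields $\int_0^\infty e^{\frac{\alpha_3}4 t}\|\nabla\p u\|_{L_\infty}\,dt\leq C_0$. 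For $\rG$ the only change is to use $\|\rG\|_{L_\infty}\lesssim\|\rG\|_{L_2}^{1-\lambda}\|\nabla G\|_{L_r}^{\lambda}$, where the scaling relation forces $\lambda=\frac{r}{2(r-1)}$, hence $1-\lambda=\theta$. As \eqref{eq:decayH1} only gives $\|\rG\|_{L_2}\lesssim C_0\sqrt\nu\,e^{-\frac{\alpha_3}4 t}$, the same argument produces the extra factor $\nu^{(1-\lambda)/2}=\nu^{\theta/2}$, that is $\int_0^\infty e^{\frac{\alpha_3}4 t}\|\rG\|_{L_\infty}\,dt\leq C_0\nu^{c\theta}$ with $c=1/2$; choosing $\theta=\frac{r-2}{2(r-1)}$ small amounts to taking $r$ close to $2$.

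\emph{Main obstacle.} The delicate point is the behaviour at $t=0$ in Step 1: the weighted estimates of Proposition \ref{p:timedecay2} degenerate like $t^{-1/2}$, and one must verify that, after the Gagliardo--Nirenberg interpolation and the H\"older splitting in time, the accumulated power of $t$ stays strictly above $-1$ near the origin while the exponential weight remains small enough to control the tail at infinity. The secondary subtlety is the nonzero mean of $\t u$, which has to be absorbed using $\int_{\T^2}\rho\t u\dbar x=0$ and a Poincar\'e-type inequality of the Appendix; everything else is routine interpolation book-keeping.
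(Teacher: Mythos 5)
Your argument is correct and follows essentially the same route as the paper: a Gagliardo--Nirenberg interpolation of the $L_\infty$ norm between the $L_2$ norm (bounded via \eqref{eq:decayH1}, where the $\sqrt\nu$ loss on $\|\rG\|_{L_2}$ is exactly what produces the $\nu^{c\theta}$ factor) and an $L_r$ norm of the gradient, which is reduced through \eqref{eq:Lp} and \eqref{eq:sobolev} to the time-weighted bounds on $\sqrt{\rho t}\,\t u$ and $\sqrt t\,\nabla \t u$ from Proposition \ref{p:timedecay2}, with a H\"older inequality in time absorbing the $t^{-1/2}$ singularity at the origin. The paper performs this in a single step rather than isolating an intermediate $L_1(L_r)$ bound, and the only point to tidy in your Step~1 is the inequality $\|\rho\t u\|_{L_r}\lesssim\|\sqrt\rho\,\t u\|_{L_2}^{\gamma}\|\nabla\t u\|_{L_2}^{1-\gamma}$: in the presence of vacuum it should be obtained by interpolating $L_r$ between $L_2$ and a finite $L_q$ (using $\|\rho\t u\|_{L_2}\leq\sqrt{\rho^*}\|\sqrt\rho\,\t u\|_{L_2}$ and \eqref{eq:sobolev} for the $L_q$ end), which yields an exponent slightly different from $2/r$ but still strictly positive, which is all the subsequent H\"older step requires.
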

\begin{proof}
We start with the Gagliardo-Nirenberg inequality 
$$\|\wt G\|_{L_\infty} \lesssim \|\wt G\|_{L_2}^\theta\|\nabla G\|^{1-\theta}_{L_p}\with \theta:=\frac{p-2}{2p-2}
$$
which, combined with Inequalities \eqref{eq:Lp} and \eqref{eq:sobolev} implies that 
$$ \|\wt G\|_{L_\infty} \leq C_0 \|\wt G\|_{L_2}^\theta\|\nabla\dot u\|^{1-\theta}_{L_2}.$$
Hence we have, using H\"older inequality to get the second line,
$$\begin{aligned}\int_0^\infty\!\! e^{(1-\theta)\frac{\alpha_3}2t} \|\wt G\|_{L_\infty}\,dt&\leq C_0\nu^{\frac\theta2}
\int_0^\infty  \biggl(\frac{\, \|\wt G\|_{L_2}}{\nu^{1/2}}\biggr)^\theta\,\bigl\|e^{\frac{\alpha_3}2t}\sqrt t\nabla\dot u\bigr\|^{1-\theta}_{L_2}\,\frac{dt}{t^{\frac{1-\theta}2}}\\
&\leq C_0\nu^{\frac\theta2}
\biggl(\int_0^\infty \! \biggl(\frac{ \|\wt G\|_{L_2}^2}{\nu}\biggr)^{\frac{\theta}{1+\theta}}\frac{dt}{t^{\frac{1-\theta}{1+\theta}}}\biggr)\biggr)^{\frac{1+\theta}2}
\biggl(\int_0^\infty \!t\|e^{\alpha_3 t} \nabla\dot u\|^{2}_{L_2}\,dt\biggr)^{\frac{1-\theta}2}\cdotp\end{aligned}$$

The term with $\wt G$ (resp. $\nabla \dot u$) can be bounded according to \eqref{eq:decayH1} 
(resp. Proposition \ref{p:timedecay2}), which, since $\theta\in(1/2,1)$ gives the first inequality of the corollary. 
\smallbreak
For proving the second inequality, we proceed the same and now get 
$$
\int_0^\infty e^{\frac{\alpha_3}4t} \|\nabla\p u\|_{L_\infty}\,dt \lesssim 
\int_0^\infty  e^{\frac{\alpha_3}4t} \|\nabla \p u\|_{L_2}^\theta\|\nabla\dot u\|^{1-\theta}_{L_2}\,dt.$$
The only difference is that we use \eqref{eq:tweug1} to bound $\|\sqrt t\nabla \p u\|_{L_2}.$ 
This allows to complete the proof of the corollary. 
\end{proof}



\subsection* {Acknowledgments:}
 The second author has been partly funded by the B\'ezout Labex, funded by ANR,  reference ANR-10-LABX-58.


\appendix
\section{}
Let us first prove Poincar\'e and Sobolev inequalities in the torus
for functions that need not have zero mean value. 
\begin{proposition}\label{prop1}
Let $a$ be a measurable function on $\T^d$ with  mean value $1,$
and let $z$ be in $H^1(\T^d).$ Denote by $M_a(z)$ the mean value of $az.$
Then, we have for all $d\geq1,$
\begin{multline}\label{eq:poincare}
\|z\|_{L_2(\T^d)}^2\leq  |M_a(z)|\bigl(|M_a(z)|+2c_{\T^d}\|a-1\|_{L_2(\T^d)}\|\nabla z\|_{L_2(\T^d)}\bigr)\\
+c_{\T^d}^2 \|a\|_{L_2(\T^d)}^2\|\nabla z\|_{L_2(\T^d)}^2.
  \end{multline}
  Furthermore, in the case  $M_a(z)=0$ and $d=2,$  then we have
  for all $p\in[2,\infty)$
   \begin{align}\label{eq:GN}
  \|z\|_{L_p(\T^2)}&\leq C_{p,\T^2}
   \|a\|_{L_2}\|z\|_{L_2(\T^2)}^{2/p}\|\nabla z\|_{L_2(\T^2)}^{1-2/p},\\\label{eq:sobolev}
  \|z\|_{L_p(\T^2)}&\leq C_p c_{\T^2}^{2/p}\|a\|_{L_2(\T^2)}\|\nabla z\|_{L_2(\T^2)}.\end{align}  
\end{proposition}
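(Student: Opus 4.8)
The plan is to treat all three inequalities with a single device, relying throughout on the convention that $\dbar x$ is a \emph{probability} measure: split $z=\bar z+\tilde z$ into its mean value $\bar z:=\int_{\T^d}z\,\dbar x$ and the zero-mean fluctuation $\tilde z:=z-\bar z$, control $\tilde z$ by the classical zero-mean inequalities \eqref{eq:poinca} and (in two dimensions) Gagliardo--Nirenberg, and control the mean $\bar z$ by exploiting the normalization $\int_{\T^d}a\,\dbar x=1$. The crucial algebraic observation is that, since $\int_{\T^d}(a-1)\,\dbar x=0$, one has the orthogonality relations $\|z\|_{L_2}^2=\bar z^2+\|\tilde z\|_{L_2}^2$ and $\|a\|_{L_2}^2=1+\|a-1\|_{L_2}^2$; in particular $\|a\|_{L_2}\geq1$ and $\|a-1\|_{L_2}\leq\|a\|_{L_2}$, which are exactly what convert the $\|a-1\|_{L_2}$-bounds below into the cleaner $\|a\|_{L_2}$-bounds of the statement.

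First I would prove \eqref{eq:poincare}. Writing $M_a(z)=\int_{\T^d}a(\bar z+\tilde z)\,\dbar x=\bar z+\int_{\T^d}(a-1)\tilde z\,\dbar x$, we get $\bar z=M_a(z)-\int_{\T^d}(a-1)\tilde z\,\dbar x$, and Cauchy--Schwarz together with \eqref{eq:poinca} applied to $\tilde z$ (note $\nabla\tilde z=\nabla z$) yields $|\bar z|\leq|M_a(z)|+c_{\T^d}\|a-1\|_{L_2}\|\nabla z\|_{L_2}$. Squaring this, bounding $\|\tilde z\|_{L_2}^2\leq c_{\T^d}^2\|\nabla z\|_{L_2}^2$, and adding the two contributions in $\|z\|_{L_2}^2=\bar z^2+\|\tilde z\|_{L_2}^2$ gives the claim once the coefficient $c_{\T^d}^2(\|a-1\|_{L_2}^2+1)$ of $\|\nabla z\|_{L_2}^2$ is rewritten as $c_{\T^d}^2\|a\|_{L_2}^2$ through the Pythagorean identity above.

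For \eqref{eq:GN} I would specialize to $d=2$ and $M_a(z)=0$, so that now $\bar z=-\int_{\T^2}(a-1)\tilde z\,\dbar x$. I estimate $\|z\|_{L_p}\leq\|\tilde z\|_{L_p}+|\bar z|$, bound $\|\tilde z\|_{L_p}$ by the classical two-dimensional Gagliardo--Nirenberg inequality $\|\tilde z\|_{L_p}\lesssim\|\tilde z\|_{L_2}^{2/p}\|\nabla z\|_{L_2}^{1-2/p}$ with $\|\tilde z\|_{L_2}\leq\|z\|_{L_2}$, and for the mean I interpolate $\|\tilde z\|_{L_2}=\|\tilde z\|_{L_2}^{2/p}\|\tilde z\|_{L_2}^{1-2/p}$ and apply \eqref{eq:poinca} to the second factor, so that $|\bar z|\leq\|a-1\|_{L_2}\|\tilde z\|_{L_2}\lesssim c_{\T^2}^{1-2/p}\|a-1\|_{L_2}\|z\|_{L_2}^{2/p}\|\nabla z\|_{L_2}^{1-2/p}$. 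Summing the two contributions and using $\|a-1\|_{L_2}\leq\|a\|_{L_2}$ produces the stated bound, with a constant $C_{p,\T^2}$ absorbing the powers of $c_{\T^2}$ (here their precise exponents are irrelevant).

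Finally, for \eqref{eq:sobolev} I would again split and bound the fluctuation directly: \eqref{eq:poinca} gives $\|\tilde z\|_{L_2}\leq c_{\T^2}\|\nabla z\|_{L_2}$, whence Gagliardo--Nirenberg yields $\|\tilde z\|_{L_p}\leq C_pc_{\T^2}^{2/p}\|\nabla z\|_{L_2}$. The step requiring care — and the main obstacle — is to recover the \emph{same} power $c_{\T^2}^{2/p}$ for the mean-value term, since the naive $L_2$--$L_2$ Cauchy--Schwarz pairing only produces $c_{\T^2}^{1}$. The remedy is to pair with the dual exponent instead: by H\"older $|\bar z|=\bigl|\int_{\T^2}(a-1)\tilde z\,\dbar x\bigr|\leq\|a-1\|_{L_{p'}}\|\tilde z\|_{L_p}$ with $p'=p/(p-1)$, and since $\dbar x$ is a probability measure and $p'\leq2$ we have $\|a-1\|_{L_{p'}}\leq\|a-1\|_{L_2}$; combined with the $L_p$ bound on $\tilde z$ this gives $|\bar z|\lesssim c_{\T^2}^{2/p}\|a-1\|_{L_2}\|\nabla z\|_{L_2}$ with the correct scaling. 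Collecting the two pieces and once more using $1\leq\|a\|_{L_2}$ and $\|a-1\|_{L_2}\leq\|a\|_{L_2}$ completes the proof.
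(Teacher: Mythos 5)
Your proposal is correct and follows essentially the same route as the paper: the decomposition $z=\bar z+\wt z$, the Pythagorean identities $\|z\|_{L_2}^2=\bar z^2+\|\wt z\|_{L_2}^2$ and $\|a\|_{L_2}^2=1+\|a-1\|_{L_2}^2$, the zero-mean Poincar\'e inequality for $\wt z$, and the control of $\bar z$ through $\int_{\T^d}(a-1)\wt z\,\dbar x$ are exactly the paper's steps. Your treatment of the mean term in \eqref{eq:sobolev} via the $L_{p'}$--$L_p$ H\"older pairing (rather than the $L_2$--$L_2$ pairing the paper's one-line sketch suggests) is in fact the same device the paper uses for \eqref{eq:GN}, and it is the cleaner way to land on the stated power $c_{\T^2}^{2/p}$.
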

\begin{proof}
Let $\tilde{z}:=z-\bar{z}$ where $\bar{z}$ stands for the mean value of $z$. 
Then, we have
\begin{equation}\label{eq:poinca0}\|z\|_{L_2(\T^d)}^2=
\abs{\bar{z}}^2+\|\wt z\|_{L_2(\T^d)}^2.\end{equation}
As the mean value of $a$ is $1,$ we have
$$\int_{\T^d} a z\dbar x=\int_{\T^d} a \bar{z}\dbar x+\int_{\T^d} a \tilde{z}\dbar x= \bar{z} +\int_{\T^d} (a-1) \tilde{z}\dbar x.$$
By Cauchy-Schwarz inequality, this implies that
\begin{equation}\label{eq:meanvalue}
 \abs{\bar{z}}\leq  \abs{\int_{\T^d}az\dbar x}+\|a-1\|_{L_2}\|\wt z\|_{L_2}.
    \end{equation}
The right-hand side may be bounded by means of \eqref{eq:poinca} and, since  the mean value of $a$
is $1,$ we have
$$\|a-1\|_{L_2}^2+1=\|a\|_{L_2}^2.$$
Hence, reverting to \eqref{eq:poinca0}  yields \eqref{eq:poincare}.
\medbreak
In order to prove \eqref{eq:GN}, we take advantage of 
the Gagliardo-Nirenberg inequality for functions with zero mean value
and argue as follows:
$$\begin{aligned} \|z\|_{L_p}&\leq |\bar z|+\|\wt z\|_{L_p}\\
&\leq \bigl(1+\|a-1\|_{L_{p'}})\|\wt z\|_{L_p}\\
&\leq C_p\bigl(1+|\T^2|^{\frac12-\frac1p}\|a-1\|_{L_2}\bigr)\|\wt z\|_{L_2}^{2/p}\|\nabla \wt z\|_{L_2}^{1-2/p}.
\end{aligned}$$
As $\|\wt z\|_{L_2}\leq \|z\|_{L_2}$ and $1+\|a-1\|_{L_2}\leq\sqrt 2\|a\|_{L_2},$
we get \eqref{eq:GN}. 
Proving \eqref{eq:sobolev} is similar except that we use \eqref{eq:meanvalue} to bound $\bar z,$
then  \eqref{eq:poinca}. 
\end{proof}
\begin{remark}\label{rem1}
For smooth enough solutions  of Systems $(INS)$ or $(CNS),$ we have
\begin{equation}
    \int_{\T^d} \rho u_t\dbar x=-\int_{\T^d} \rho u \cdot \nabla u\dbar x.
\end{equation}
Hence, if the mean value of $\rho$ is $1$ then
\begin{equation}\label{eq:utl2}
    \|u_t\|_{L_2}\leq \abs{\int_{\T^d} \rho u \cdot \nabla u\dbar x}+
c_{\T^d}\sqrt2\|\rho\|_{L_2} \|\nabla u_t\|_{L_2}.
\end{equation}
Similarly, as  the  mean value of $\rho u$ and of $\rho \t u$ 
(with $\t u:=u_t+u\cdot\nabla u$)
is $0,$   we have
\begin{equation}\label{eq:pti}
   \|u\|_{L_2(\T^d)}\leq c_{\T^d}\|\rho\|_{L_2}\|\nabla u\|_{L_2(\T^d)}\andf
     \|\t u\|_{L_2(\T^d)}\leq c_{\T^d}\|\rho\|_{L_2}\|\nabla \t u\|_{L_2(\T^d)}.
\end{equation}
\end{remark}
\medbreak
In Section \ref{s:CNS}, we  used repeatedly 
  the equivalence between  $\|e\|_{L_1},$ $\|\rho-\bar\rho\|^2_{L_2}$ and $\|P(\rho)-P(\bar\rho)\|_{L_2}^2$ when the density is bounded, as stated in the following lemma.
  \begin{lemma}\label{lem:eseq}
      Let $a:=\rho-\bar\rho$ and  $e$ be defined in \eqref{eq:e} with  $P(\rho)=\rho^{\gamma}$ for some $\gamma\geq 1.$  Then, provided
     $0\leq \rho \leq \rho^*,$  
 there exist a positive constant $c_\gamma,$ an increasing  function $F_1$ on $\R_+$ with $F_1(0)=1$
 depending only on $\gamma,$ and an absolute constant $C$  such that 
$$\bar\rho^{\gamma-1}a\leq P(\rho)-P(\bar\rho)\leq  F_1(\rho^*/\bar\rho)\bar\rho^{\gamma-1} a \andf 
c_\gamma\bar\rho^{\gamma-2} a^2\leq e(\rho)\leq C \bar\rho^{\gamma-2} a^2 F_1(\rho^*/\bar\rho).$$
  \end{lemma}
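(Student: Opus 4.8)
The plan is to reduce to the normalized case $\bar\rho=1$ and then treat the pressure and the potential energy separately, each via an explicit integral representation. First I would exploit homogeneity: setting $\sigma:=\rho/\bar\rho\in[0,\sigma^*]$ with $\sigma^*:=\rho^*/\bar\rho$, one checks that $P(\rho)-P(\bar\rho)=\bar\rho^\gamma(\sigma^\gamma-1)$, $a=\rho-\bar\rho=\bar\rho(\sigma-1)$, and, after the substitution $s=\bar\rho\tau$ in \eqref{eq:e}, $e(\rho)=\bar\rho^\gamma e_1(\sigma)$, where $e_1$ is the potential energy attached to $\bar\rho=\kappa=1$. All four claimed inequalities then reduce to the corresponding statements for $e_1$ on $[0,\sigma^*]$ with $a$ replaced by $\sigma-1$. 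From the integral definition I would record the elementary identity $e_1''(s)=\gamma s^{\gamma-2}$ together with $e_1(1)=e_1'(1)=0$, valid for every $\gamma\ge1$ (the isothermal case $\gamma=1$ included).

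For the pressure I would use $\sigma^\gamma-1=(\sigma-1)\phi(\sigma)$ with $\phi(\sigma):=\frac{\sigma^\gamma-1}{\sigma-1}=\gamma\int_0^1(1+t(\sigma-1))^{\gamma-1}\,dt$. Differentiating under the integral gives $\phi'(\sigma)=\gamma(\gamma-1)\int_0^1 t(1+t(\sigma-1))^{\gamma-2}\,dt\ge0$, so $\phi$ is nondecreasing on $\R_+$ with $\phi(0)=1$; hence $1\le\phi(\sigma)\le\phi(\sigma^*)$ on $[0,\sigma^*]$. Taking $F_1:=\phi$ (increasing, $F_1(0)=1$, depending only on $\gamma$) and scaling back yields $\bar\rho^{\gamma-1}|a|\le|P(\rho)-P(\bar\rho)|\le F_1(\rho^*/\bar\rho)\,\bar\rho^{\gamma-1}|a|$. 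Since $P(\rho)-P(\bar\rho)$ and $a$ share the same sign, this is exactly the displayed chain when $\rho\ge\bar\rho$ and its reversal when $\rho<\bar\rho$; in either case it is this magnitude estimate that is used in \eqref{eq:eqtdp}.

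For the potential energy I would start from Taylor's formula with integral remainder: by $e_1(1)=e_1'(1)=0$, $e_1(\sigma)=\int_1^\sigma(\sigma-s)e_1''(s)\,ds=\gamma\int_1^\sigma(\sigma-s)s^{\gamma-2}\,ds$, an expression that is manifestly nonnegative whether $\sigma\ge1$ or $\sigma<1$. The upper bound is then immediate for $\sigma\ge1$: on $[1,\sigma]$ one has $(\sigma-s)s^{\gamma-2}\le(\sigma-1)s^{\gamma-1}$ (equivalent to $s\ge1$), whence $e_1(\sigma)\le\gamma(\sigma-1)\int_1^\sigma s^{\gamma-1}\,ds=(\sigma-1)(\sigma^\gamma-1)=\phi(\sigma)(\sigma-1)^2\le F_1(\sigma^*)(\sigma-1)^2$, i.e. $e(\rho)\le C\,F_1(\rho^*/\bar\rho)\,\bar\rho^{\gamma-2}a^2$ with an absolute constant. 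On the compact region $\sigma\in[0,1]$ I would instead argue by continuity: $g(\sigma):=e_1(\sigma)/(\sigma-1)^2$ extends continuously with $g(1)=\gamma/2$ and is strictly positive, hence squeezed there between two positive constants depending only on $\gamma$, which covers both bounds for $\sigma\le1$. The lower bound for $\sigma\ge1$ then follows, when $\gamma\ge2$, from $s^{\gamma-2}\ge1$ on $[1,\sigma]$, giving $e_1(\sigma)\ge\frac\gamma2(\sigma-1)^2$ with a constant depending only on $\gamma$.

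The genuine obstacle is the lower energy bound in the range $1\le\gamma<2$ at large density. There $s^{\gamma-2}$ attains its minimum at $s=\sigma$, so the same computation only yields $e_1(\sigma)\ge\frac\gamma2(\sigma^*)^{\gamma-2}(\sigma-1)^2$, and this deterioration is unavoidable: as $\sigma\to\infty$ one has $e_1(\sigma)\sim\frac{\sigma^\gamma}{\gamma-1}$ against $(\sigma-1)^2\sim\sigma^2$ (and $e_1(\sigma)\sim\sigma\log\sigma$ when $\gamma=1$), so no constant depending on $\gamma$ alone can work once $\gamma<2$. Hence the lower constant $c_\gamma$ should in general be read as $c_\gamma\,(\rho^*/\bar\rho)^{\gamma-2}$ for $\gamma<2$ (and is $\gamma$-only for $\gamma\ge2$); this is harmless for the applications in Section \ref{s:CNS}, where the density is already controlled by $\rho^*$ and constants of the form $C_{\rho^*,\gamma}$ are permitted. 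Collecting the regimes $\sigma\le1$ and $\sigma\ge1$ and multiplying back by $\bar\rho^\gamma=\bar\rho^{\gamma-2}\bar\rho^2$ then completes the proof.
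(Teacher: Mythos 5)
Your proof is correct. For the pressure part it coincides with the paper's argument: both rest on the identity $P(\rho)-P(\bar\rho)=\bar\rho^{\gamma-1}a\,F_1(\rho/\bar\rho)$ with $F_1(s)=\gamma\int_0^1(1+\tau(s-1))^{\gamma-1}\,d\tau=\frac{s^\gamma-1}{s-1}$, which is exactly your $\phi$. For the potential energy the routes genuinely differ. The paper inserts this same identity under the integral defining $e$, which squeezes $e(\rho)$ between $\bar\rho^{\gamma-1}$ and $\bar\rho^{\gamma-1}F_1(\rho^*/\bar\rho)$ times the isothermal quantity $\rho\log(\rho/\bar\rho)+\bar\rho-\rho$, and then compares the latter with $a^2/\bar\rho$ by inspecting the limits at $\rho\to0$ and $a\to0$. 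You instead normalize $\bar\rho=1$ by scaling and use Taylor's formula with integral remainder, $e_1(\sigma)=\gamma\int_1^\sigma(\sigma-s)s^{\gamma-2}\,ds$, bounding $s^{\gamma-2}$ pointwise. Your route is more self-contained (the paper leaves the comparison of $\rho\log(\rho/\bar\rho)+\bar\rho-\rho$ with $a^2/\bar\rho$ away from the two limit points to the reader) and it makes the dependence of every constant on $\gamma$ and $\rho^*/\bar\rho$ explicit; the paper's route has the merit of reducing all $\gamma\ge1$ to a single scalar function.

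Your remark about the lower bound is a genuine catch concerning the \emph{statement}, not a defect of your proof. For $1\le\gamma<2$ one has $e(\rho)\sim\rho^\gamma/(\gamma-1)$ (or $\rho\log\rho$ when $\gamma=1$) as $\rho\to\infty$, while $\bar\rho^{\gamma-2}a^2\sim\bar\rho^{\gamma-2}\rho^2$, so no constant depending on $\gamma$ alone can give $c_\gamma\bar\rho^{\gamma-2}a^2\le e(\rho)$ uniformly in $\rho^*$; the paper's own squeeze degenerates in the same way, since $\rho\log(\rho/\bar\rho)+\bar\rho-\rho=o(a^2/\bar\rho)$ at infinity and the two limits recorded at the end of the paper's proof do not control the regime $\rho\simeq\rho^*\gg\bar\rho$. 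The lemma should therefore be read with $c_\gamma$ replaced by a constant depending on $\gamma$ and $\rho^*/\bar\rho$, e.g. $c_\gamma\min\bigl(1,(\rho^*/\bar\rho)^{\gamma-2}\bigr)$, which is indeed how it is invoked in Section \ref{s:CNS}, where the corresponding constants are written $C_{\rho^*}$ or $C_{\rho^*,\gamma}$.
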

  \begin{proof} By virtue of the mean value formula, we have
  \begin{equation}\label{eq:F1}
  P(\rho)-P(\bar\rho)= \bar\rho^{\gamma-1}(\rho-\bar\rho)F_1(\rho/\bar\rho)\with F_1(s):= \gamma \int_0^1(1+\tau(s-1))^{\gamma-1}\,d\tau.
  \end{equation}
  The function $F_1$ is increasing on $\R_+$ with value $1$ at $0,$ which   gives  the first inequality.
  To prove the second one, from the definition of  $F_1,$ we observe that
      $$\bar\rho^{\gamma-1}\rho\int_{\bar\rho}^\rho\frac{s-\bar\rho}{s^2}\,ds = 
      \rho \int_{\bar\rho}^{\rho} \frac{P(s)-P(\bar\rho)}{s^2F_1(s/\bar\rho)}\,ds,$$
which implies, owing to the definition of $e$ that
      $$\bar\rho^{\gamma-1}\rho \int_{\bar\rho}^{\rho} \frac{s-\bar\rho}{s^2}\,ds\leq e(\rho)=\rho \int_{\bar\rho}^{\rho} \frac{P(s)-P(\bar\rho)}{s^2}\,ds\leq F_1(\rho^*/\bar\rho)\,\rho \int_{\bar\rho}^{\rho} \frac{(s-\bar\rho)}{s^2}\,ds.$$
      Then, in light of the facts that
      $$\rho \int_{\bar\rho}^{\rho} \frac{s-\bar\rho}{s^2}\,ds
      =\rho\log\Bigl(\frac\rho{\bar\rho}\Bigr)+\bar\rho-\rho,$$
      $$
    \underset{\rho\to 0}{\lim} \frac{\rho\log(\rho/\bar\rho)+\bar\rho-\rho}{(\rho-\bar\rho)^2}=\frac1{\bar\rho}  \andf \underset{a\to 0}{\lim} \frac{(\bar\rho+a)\log(1+a/\bar\rho)-a}{a^2}=\frac 1{2\bar\rho},$$
    it is easy to complete the proof.
  \end{proof}

\begin{small}	 

\end{small}

\bigbreak\bigbreak
\noindent\textsc{Univ Paris Est Creteil, Univ Gustave Eiffel, CNRS, \\ LAMA UMR8050, F-94010 Creteil, France}
\par\nopagebreak
E-mail addresses: raphael.danchin@u-pec.fr,
shan.wang@u-pec.fr

\end{document}